\documentclass{amsart}
\usepackage{amssymb,epsfig,amsmath}
\usepackage{lipsum}
\usepackage{enumitem}
\usepackage{hyperref}
\usepackage[all]{xy}
\usepackage[english]{babel}
\usepackage{amsfonts}
\usepackage{graphicx}
\usepackage{amsthm}
\usepackage{xcolor}
\setlength{\oddsidemargin}{0.18in}
\setlength{\evensidemargin}{0.18in}
\setlength{\textwidth}{6.2in}
\setlength{\textheight}{8.5in}
\setcounter{secnumdepth}{2} \setcounter{tocdepth}{1}
\usepackage{xcolor}

\setcounter{secnumdepth}{3} \setcounter{tocdepth}{1}

\usepackage{color}
\newcommand\Cov{\mathrm {Cov}}

\newtheorem{theorem}{Theorem}[section]
\newtheorem{theo}[theorem]{Theorem}

\newtheorem {coro}[theorem]{Corollary}
\newtheorem {pro}[theorem]{Proposition}
\newtheorem{defi}[theorem]{Definition}

\newtheorem{example}[theorem]{Example}
\newtheorem{lem}[theorem]{Lemma}
\newtheorem{remark}[theorem]{Remark}
\newtheorem{definition}[theorem]{Definition}

\newdimen\AAdi%
\newbox\AAbo%
\def\AArm{\fam0 }
\def\AAk#1#2{\setbox\AAbo=\hbox{#2}\AAdi=\wd\AAbo\kern#1\AAdi{}}%
\def\AAr#1#2#3{\setbox\AAbo=\hbox{#2}\AAdi=\ht\AAbo\raise#1\AAdi\hbox{#3}}%
%
%
%
%
\def\BBe{{\AArm I\!E}}%
\def\BBl{{\AArm I\!L}}%
\def\BBm{{\AArm I\!M}}%
\def\BBn{{\AArm I\!N}}%
\def\BBp{{\AArm I\!P}}%
\def\BBr{{\AArm I\!R}}%
\def\BBt{{\AArm T\AAk{-.62}{T}T}}%
\def\bbz{{\AArm Z\!\!Z}}%
\def\BBone{{\AArm 1\AAk{-.8}{I}I}}%


\RequirePackage[OT1]{fontenc}
\usepackage{hyperref}
\newdimen\AAdi%
\newbox\AAbo%
\def\AArm{\fam0 }
\def\AAk#1#2{\setbox\AAbo=\hbox{#2}\AAdi=\wd\AAbo\kern#1\AAdi{}}%
\def\AAr#1#2#3{\setbox\AAbo=\hbox{#2}\AAdi=\ht\AAbo\raise#1\AAdi\hbox{#3}}%
\def\BBn{{\AArm I\!N}}%
\def\BBe{{\AArm I\!E}}
\def\BBr{{\AArm I\!R}}
\def\BBp{{\AArm I\!P}}
\def\BBone{{\AArm 1\AAk{-.8}{I}I}}%

\def\lrar{\longrightarrow}

\def\bbz{{\mathbb Z}}
\def\bbx{{\mathbb X}}
\def\bbr{{\mathbb R}}

\def\bby{{\mathbb Y}}


\title[PR Supports of Stationary Dependent Random Variables] 
{Topological reconstruction of compact supports of dependent stationary random variables}

\begin{document}
\author{Sadok Kallel and Sana Louhichi}\thanks{The first author is supported by an SFRG-grant from the American University of Sharjah (AUS, UAE). The second author is supported by Univ. Grenoble Alpes, CNRS,  LJK}

\address{Sadok Kallel: American University of Sharjah, UAE, and Laboratoire Painlev\'e, Universit\'e de Lille, France.}
\email{sadok.kallel@univ-lille.fr}
\address{Sana Louhichi: 
Univ. Grenoble Alpes, CNRS, Grenoble INP*, LJK 38000 Grenoble, France.
*Institut of Engineering Univ. Grenoble Alpes
\\
700 Avenue Centrale,
38401 Saint-Martin-d'H\`eres, France.}
\email{sana.louhichi@univ-grenoble-alpes.fr}

\maketitle

\begin{abstract}
    In this paper we extend results on reconstruction of probabilistic supports of random i.i.d variables to supports of dependent stationary $\bbr^d$-valued random variables. All supports are assumed to be compact of positive reach in  Euclidean space. Our main results involve the study of the convergence in the Hausdorff sense of a cloud of stationary dependent random vectors to their common support. A novel topological reconstruction result is stated, and a  number of illustrative examples are presented. The example of the M\"{o}bius Markov chain on the circle is treated at the end with simulations.
\end{abstract}
\vspace{1cm}


\section{Introduction}

Given a sequence of stationary random variables of unknown common law  and unknown  compact support $\BBm$ (Section \ref{results}), uncovering topological properties of $\BBm$ based on the observed data can be very useful in practice. Data analysis in high-dimensional spaces with a probabilistic point of view was initiated in \cite{smale} where data was assumed to be drawn from sampling an i.i.d. probability distribution or near a submanifold $\BBm$ of Euclidean space. Topological properties of $\BBm$ (homotopy type and homology) were deduced based on the random samples and the  geometrical properties of $\BBm$. Several papers on probability and topological inference ensued, some taking a persistence homology approach by providing a confidence set for persistence diagrams corresponding to the Hausdorff distance of a sample from a distribution supported on $\BBm$ \cite{Fasy}.

Topology intervenes in probability through reconstruction results (see \cite{attali, Ch, oudot, Fasy, kimetal, smale} and references therein). This research direction is now recognized as part of ``manifold learning''.
Given an $n$ point-cloud $\bbx_n$ lying in a support $\BBm$, which is generally assumed to be a compact subspace of $\bbr^d$ for some $d>0$, and given a certain probability distribution of these $n$ points on $\BBm$, one can formulate practical conditions from this data to reconstruct, up to homotopy or up to homology, this support $\BBm$. 
Reconstruction up to homotopy means recovering the homotopy type of $\BBm$. Reconstruction up to homology means determining, up to a certain degree, the homology groups of $\BBm$. Recovering the geometry of $\BBm$, including curvature and volume, is a much more delicate task  (see \cite{aamari, divol, fefferman, smale, wang}).

The goal of our work is to extend work of Nigoyi, Smale and Weinberger \cite{smale} from data drawn from sampling an i.i.d probability distribution that has support on 
a smooth submanifold $\BBm$ of Euclidean space to data drawn from stationary \textit{dependent} random variables concentrated inside a compact space of \textit{positive reach} (or PR-set). It is fitting here to define this notion: the \textit{reach} of a closed set $S$ in a metric space is the supremum $\tau\geq 0$ such that any point within distance less than $\tau$ of $S$ has a unique nearest point in $S$. Spaces of positive reach $\tau$ have been introduced in \cite{federer}, and form a natural family of spaces more general than convex sets $(\tau=\infty$) or smooth submanifolds, but sharing many to their common integro-geometric properties like ``curvature measures'' \cite{thale} (see Section \ref{reconstruct}). 

The interest in going beyond independence lies in the fact that many of the observations of everyday life are dependent, and independence is not sufficient to describe these phenomena. The study of the data support topologically and geometrically in this case can be instrumental in directional statistics for example, where the observations are often correlated. This can help get information on animal migration paths or
wind directions for instance. Modeling by a Markov chain on an unknown compact manifold, with or without boundary, makes it possible to study such examples.
Other illustrative examples can be found in more applied fields, for instance in cosmology \cite{cosmos}, medicine \cite{medecine}, imaging \cite{image}, biology \cite{biology}, environmental science \cite{en}, etc. 

To get information on an unknown support from stationary dependent data, we need to study the convergence in the Hausdorff distance $d_H$ of the data, seen as a (finite) point-cloud, to its support, similar to what was done in the i.i.d case \cite{Ch,CR2004, CR2009, Fasy}. The main interest in the metric $d_H$ is the following property: if $S\subset M$ is this point cloud in $M$, then $d_H(S,M)\leq \epsilon$ is equivalent to $S$ being $\epsilon$-dense in $M$ (see Section \ref{reconstruct}). We can expand this relationship to the random case as follows. 

\begin{definition}\label{densite}\rm We say that a point-cloud $\bbx_n$ of $n$ stationary dependent $\bbr^d$-valued random variables is $(\epsilon,\alpha)$-dense in $\BBm\subset\bbr^d$, for 
given $\epsilon> 0$ and $\alpha\in ]0,1[$,  if \begin{equation*}
\BBp\left(d_H(\bbx_n, \BBm)\leq \epsilon\right)\geq 1-\alpha.
\end{equation*}
If $\bbx:=(X_i)_{i\in \BBt}$ {{, with $\BBt$ being $\bbz, \BBn$ or $\BBn\setminus \{0\}$,}} is a stationary sequence of $\BBr^d$-valued random variables, we say that $\bbx$ is asymptotically $(\epsilon, \alpha)$-dense in $\BBm\subset\bbr^d$, with threshold $n_0$, {{ if for all positive $\epsilon$ sufficiently small and any $0<\alpha <1$,}}  there exists  a positive integer $n_0$ so that $\forall\ n\geq n_0$,
$\bbx_n=\{X_1,\ldots, X_n\}$ is $(\epsilon,\alpha)$-dense in $\BBm$. 
\end{definition}

The first undertaking of the paper is to identify sequences of dependent random vectors which are asymptotically $(\epsilon,\alpha)$-dense in a compact support. In Section \ref{examples} and Section \ref{sectionMarkov} we treat explicitly a number of examples and show for all of these that the property of being asymptotically $(\epsilon,\alpha)$-dense in the compact support holds by means of a key technical Proposition \ref{pro1} which uses blocking techniques to give upper bounds for $\BBp\left(d_H(\bbx_n , {\BBm}) > \epsilon\right)$.
Blocking techniques are very useful in the theory  of limit theorems for stationary
dependent random variables, and the idea behind is to view and manipulate blocks as ``independent'' clusters of dependent variables.

We summarize our first set of results into one main theorem. Given
$\bbx:=(X_i)_{i\in \BBt}$ a stationary sequence of $\BBr^d$-valued random variables, we write
$\rho_m(\epsilon)$ the concentration quantity of the block
$(X_1,\ldots, X_m)$, 
{{that is, for $\epsilon>0$,
$$
\rho_m(\epsilon):=  \inf_{x\in \BBm_{dm}}\BBp( \|(X_1,\cdots,X_{m})^t-x\|\leq \epsilon),
$$
where $\BBm_{dm}$ denotes the support of the vector transpose $(X_1,\cdots,X_{m})^t$. }}
\begin{theo}\label{maintheo}
 The following stationary sequences of $\BBr^d$-valued random variables are asymptotically $(\epsilon,\alpha)$-dense in their common compact support:
\begin{enumerate}
\item Stationary $m$-dependent sequences
such that for any $\epsilon>0$, there exists a strictly positive constant $\kappa_{\epsilon}$ such that,
$\displaystyle
\rho_{m+1}(\epsilon) \geq \kappa_{\epsilon},
$ (Proposition \ref{coro2}). 
\item {{Stationary $m$-Approximable random
variables on a compact set. These are stationary models that can be approximated by $m$-dependent stationary sequences (see paragraph \ref{sectApp} and Proposition \ref{proapp}). }}
\item Stationary $\beta$-mixing sequences, with $(\beta_n)_n$ coefficients {{(see (\ref{betabelow}) for their definition)}},
such that for some $\beta>1$, and any $\epsilon>0$,
$$\lim_{m\rightarrow \infty} \rho_m(\epsilon)\frac{e^{m^{\beta}}}{m^{1+\beta}}=\infty,\,\,\,
{\mbox{and}}\,\,\,
\lim_{m\rightarrow\infty} \frac{e^{2m^{\beta}}}{m^2}\beta_m=0.
$$ (Proposition \ref{beta}).
 \item Stationary weakly dependent sequences, with $(\Psi(n))_n$  dependent coefficients (as introduced in (\ref{psidep})),
such that for some $\beta>1$, and any $\epsilon>0$,
$$\lim_{m\rightarrow \infty} \rho_m(\epsilon)\frac{e^{m^{\beta}}}{m^{1+\beta}}=\infty,\,\,\,
{\mbox{and that}}\,\,\,
\lim_{m\rightarrow\infty} \frac{e^{2m^{\beta}}}{m^2}\Psi(m)=0.
$$
 (Proposition \ref{psi}).
\item Stationary Markov chains
 with an invariant measure $\mu$ and a suitable transition probability kernel (assumptions (${\mathcal A}_1)$, (${\mathcal A}_2)$ of Section \ref{sectionMarkov}). See Proposition \ref{coro1} and
Proposition \ref{mobius}.
\end{enumerate}
\end{theo}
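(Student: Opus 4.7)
The strategy is to reduce each of the five cases to the universal blocking estimate of Proposition~\ref{pro1}, whose output bounds $\BBp(d_H(\bbx_n,\BBm)>\epsilon)$ in terms of the block-concentration $\rho_m(\epsilon)$ and the relevant dependence coefficient. First, since $\BBm$ is compact it admits a finite $\epsilon$-net $\{y_1,\dots,y_{N(\epsilon)}\}$, so
$$
\{d_H(\bbx_n,\BBm)>\epsilon\}\ \subset\ \bigcup_{j=1}^{N(\epsilon)}\bigl\{\bbx_n\cap B(y_j,\epsilon)=\emptyset\bigr\},
$$
and a union bound reduces the task to controlling, uniformly in $y$, the probability that no $X_i$ visits $B(y,\epsilon)$. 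One then partitions $\{1,\dots,n\}$ into $k_n\approx n/m_n$ consecutive blocks of length $m_n$ and rewrites the emptiness event as an intersection of block-level emptiness events. The quantity $\rho_{m_n}(\epsilon)$ gives a lower bound on the probability that a single block occupies an $\epsilon$-cell of its joint support $\BBm_{dm_n}$ meeting $B(y,\epsilon)$, which is the mechanism through which the concentration hypothesis enters.

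\textbf{Case-by-case verification.} Each item amounts to choosing $m_n$ so that the product of block-emptiness probabilities decays to $0$ while the cost of decoupling the blocks is negligible. For case (1), $m$-dependence with block length $m+1$ separated by one index produces genuinely independent blocks, so the product is bounded by $(1-\rho_{m+1}(\epsilon))^{n/(m+1)}\leq\exp(-\kappa_\epsilon n/(m+1))\to0$, and summing over the net concludes. Case (2) is reduced to case (1) by approximating $\bbx$ by an $m$-dependent sequence in the sense of paragraph~\ref{sectApp}, the approximation error being controlled by the underlying $m$-approximability rate. For cases (3) and (4), one chooses $m_n\sim(\log n)^{1/\beta}$ so that $e^{m_n^\beta}\sim n$; the hypothesis $\rho_{m_n}(\epsilon)e^{m_n^\beta}/m_n^{1+\beta}\to\infty$ drives the block-emptiness product to $0$, while the hypothesis on $\beta_{m_n}$ (respectively $\Psi(m_n)$) ensures that the total-variation decoupling cost $k_n\beta_{m_n}$ (resp.\ $k_n\Psi(m_n)$) is $o(1)$. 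Case (5) is isolated in Propositions~\ref{coro1} and \ref{mobius}, where the assumptions $({\mathcal A}_1),({\mathcal A}_2)$ on the transition kernel furnish a minorization/Doeblin-type lower bound that plays the role of $\rho_m(\epsilon)$ along trajectories of the chain.

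\textbf{Main obstacle.} The delicate step is the calibration of $m_n$ in cases (3) and (4). The block length must be large enough to make the mixing coefficient essentially negligible, yet small enough for $k_n=n/m_n$ independent blocks to force the block-emptiness product to $0$; the double-exponential factor $e^{m^\beta}$ appearing in both hypotheses is tailored precisely so that the single choice $m_n\sim(\log n)^{1/\beta}$ satisfies both conflicting requirements at once. A subtler structural point is that $\rho_m(\epsilon)$ controls concentration of the entire block vector $(X_1,\dots,X_m)$ in Euclidean norm, not of a single marginal, so the passage from \emph{some} $X_i\in B(y,\epsilon)$ to the block-concentration inequality requires a careful choice of the infimum point $x\in\BBm_{dm}$; this bookkeeping is encapsulated inside the proof of Proposition~\ref{pro1}, and each of the five cases then consists in verifying that its dependence assumptions feed correctly into that proposition.
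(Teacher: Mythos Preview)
Your outline is correct and follows the paper's approach: each case is handled by feeding the relevant dependence structure into the blocking estimate of Proposition~\ref{pro1}, with the choice $r_n=[(\ln n)^{1/\beta}]$, $k_n=[n/r_n]$ in cases~(3)--(4), alternating blocks of length $m+1$ in case~(1), reduction to case~(1) via Lemma~\ref{lemmapro} in case~(2), and the Doeblin-type minorization of Lemmas~\ref{mc}--\ref{lem2} in case~(5). Two small quantitative slips are worth correcting: in case~(1) the independent blocks are the \emph{even-indexed} $Y_{2i,m+1}$ (separation $m+1$, not one index), and in cases~(3)--(4) the decoupling cost in the paper's bound is $k_n^2\beta_{r_n}$ (respectively $k_n^2\Psi(r_n)$), not $k_n\beta_{r_n}$, which is precisely why the hypothesis carries the factor $e^{2m^\beta}/m^2$ rather than $e^{m^\beta}/m$.
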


 Furthermore, and for each sequence $\bbx$ of random variables listed in Theorem \ref{maintheo}, methods for finding a threshold $n_0$, with sometimes explicit formulae for it, are given so that $\bbx_n$ is $(\epsilon,\alpha)$-dense in the common support, for $n\geq n_0$. 

The next step is topological and consists in showing that when the Hausdorff distance between $\bbx_n$ and the support is sufficiently small, it is possible to reconstruct the support up to homotopy.  We write $B(x,r)$ the closed ball in the Euclidean metric centered at $x$ with radius $r>0$, and we write $X\simeq  Y$ to mean that $X$ deformation retracts onto $Y$ with $Y\subset X$ (more precisely, this means that the identity map of $X$ is homotopic to a retraction onto $Y$, leaving $Y$ fixed during the homotopy).

\begin{theo}\label{main2} 
Let $(X_i)_{i\in \BBt}$ be a stationary sequence of  $\BBr^d$-valued random variables with compact support $\BBm$ having positive reach $\tau$. 
Let $\epsilon\in \left(0, {\tau\over 2}\right)$, $r\in \left(\epsilon, {\tau\over 2}\right)$ and suppose that
$\bbx_n$ is $({\epsilon\over 2},\alpha)$-dense in $\BBm$. Then 
$$
\BBp\left(\displaystyle\bigcup_{x\in\bbx_n}B(x,r)\simeq  \BBm\right) \geq 1-\alpha.
$$
\end{theo}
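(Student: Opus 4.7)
The plan is to split the argument into a probabilistic reduction and a deterministic topological reconstruction. Since the support of each $X_i$ is $\BBm$, one has $\bbx_n\subset\BBm$ almost surely, so the event $A:=\{d_H(\bbx_n,\BBm)\leq \epsilon/2\}$ coincides with the event that $\bbx_n$ is an $\epsilon/2$-net in $\BBm$, and $\BBp(A)\geq 1-\alpha$ by the $(\epsilon/2,\alpha)$-density hypothesis. Writing $U:=\bigcup_{x\in\bbx_n}B(x,r)$, it therefore suffices to establish, deterministically, that $U\simeq \BBm$ whenever the finite set $\bbx_n\subset \BBm$ is $\epsilon/2$-dense in $\BBm$ with $\epsilon<r<\tau/2$; taking probabilities then yields the announced bound.

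For this deterministic reconstruction the inclusion $\BBm\subset U$ is immediate from $\epsilon/2<r$. Since every $x\in\bbx_n$ lies in $\BBm$, one also has $U\subset\{y\in\bbr^d:d(y,\BBm)\leq r\}\subset\{y:d(y,\BBm)<\tau\}$, and Federer's theorem on PR-sets provides a continuous nearest-point projection $\pi:y\mapsto \argmin_{m\in\BBm}\|y-m\|$ on this tubular neighborhood that fixes $\BBm$. The natural candidate for the deformation retraction is the linear homotopy $H:U\times[0,1]\to\bbr^d$, $H(y,t)=(1-t)y+t\pi(y)$, which satisfies $H(\cdot,0)=\mathrm{id}_U$, $H(\cdot,1)=\pi$ and $H(m,t)=m$ for all $m\in\BBm$. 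The whole theorem then reduces to verifying that $H$ actually takes values in $U$, i.e.\ that for every $y\in U$ the segment $[y,\pi(y)]$ stays inside the union of cloud balls.

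The main obstacle is this segment-covering step, and the strategy I would follow is a two-ball argument. For $y\in U$, choose $x_i\in\bbx_n$ with $y\in B(x_i,r)$ (e.g.\ the nearest point of $\bbx_n$ to $y$) and $x_j\in\bbx_n$ with $\|\pi(y)-x_j\|\leq \epsilon/2$, which exists by density. Writing $d=\|y-\pi(y)\|\leq r$ and $z_t=(1-t)y+t\pi(y)$, the triangle inequality immediately gives $\|z_t-x_j\|\leq (1-t)d+\epsilon/2\leq r$ as soon as $t\geq t^\star:=\max\bigl(0,\,1-(r-\epsilon/2)/d\bigr)$. For the complementary interval $t\in[0,t^\star]$ the harder task is to check that $z_t\in B(x_i,r)$: expanding $\|z_t-x_i\|^2$ produces a cross term $\langle y-x_i,\pi(y)-y\rangle$ which is controlled thanks to Federer's normal-cone inequality for sets of reach $\tau$ (applied at the base point $\pi(y)$ with $x_i\in\BBm$), and the slack $r<\tau/2$ combined with $\|y-x_i\|\leq r$ is precisely what is needed to force $\|z_t-x_i\|\leq r$ on this interval. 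Once $H(U\times[0,1])\subset U$ is established, $H$ is a genuine deformation retraction of $U$ onto $\BBm$ on the event $A$, so $\BBp(U\simeq \BBm)\geq\BBp(A)\geq 1-\alpha$ and the theorem follows.
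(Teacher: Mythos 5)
Your probabilistic reduction is exactly the paper's: by the lemma characterizing $\epsilon$-density in terms of $d_H$, the event $\{d_H(\bbx_n,\BBm)\leq\epsilon/2\}$ has probability at least $1-\alpha$, and the theorem follows once the deterministic reconstruction is established on that event. Where you genuinely diverge is in that deterministic step. The paper (Theorem \ref{mainrecon}) does not retract $\bigcup_{x}B(x,r)$ onto $\BBm$ along the nearest-point projection to $\BBm$; it first replaces $\BBm$ by a small offset $\BBm^{\oplus\delta/2}$, which is a codimension-$0$ manifold with $C^{1,1}$ boundary and reach $\tau-\delta/2$ (Lemma \ref{reachoffset}), applies Proposition \ref{codimension0} there --- whose key estimate is the bound $d(v,p)<r^2/\tau$ from Lemma 4.1 of \cite{smale}, which forces the \emph{entire} segment $[v,p]$ into a \emph{single} ball $B(x,r)$ with $x\in S\cap B(p,\epsilon/2)$, so only convexity of one ball is used --- and then composes with the retraction of the offset onto $\BBm$, choosing $\delta$ to make the inequalities compatible. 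Your route is more direct and, if completed, gives a self-contained proof for arbitrary compact PR sets without the offset construction or the $C^{1,1}$ boundary discussion.

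The price is that the step you merely assert is the entire content of the argument, and you must actually carry it out. It does close, but not for free: write $p=\pi(y)$, $u=(y-p)/d$, $s=(1-t)d$, $a=\|p-x_i\|$ and $\beta=\langle u,\,p-x_i\rangle$, so that $\|z_t-x_i\|^2=a^2+2s\beta+s^2$ is convex in $s$ and need only be checked at the endpoints $s=d$ (where it equals $\|y-x_i\|^2\leq r^2$) and $s=s_0:=r-\epsilon/2$. Substituting the constraint $a^2\leq r^2-d^2-2d\beta$ reduces the endpoint $s_0$ to the requirement $\beta\geq-(d+s_0)/2$; Federer's inequality $\langle u,\,x_i-p\rangle\leq a^2/(2\tau)$ combined with the same constraint yields $\beta\geq-(r^2-d^2)/(2(\tau-d))$, and the needed inequality $(d+s_0)(\tau-d)\geq r^2-d^2$ then follows from $\tau>2r$, $d\geq s_0$ and $s_0>r/2$ --- this last point is where the hypothesis $\epsilon<r$ enters, and a crude bound such as $a\leq 2r$ is \emph{not} sufficient here (it gives $\beta\geq-2r^2/\tau$, which can be smaller than $-(d+s_0)/2$ when $\epsilon$ is close to $r$ and $r$ close to $\tau/2$). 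Note also that the regime in which the normal-cone correction is genuinely adversarial forces $d$ to be small, in which case $t^\star=0$ and the single ball $B(x_j,r)$ already contains the whole segment, mirroring the paper's dichotomy. So: correct strategy, genuinely different from the paper's, but the quantitative heart of it is missing as written and must be supplied along the lines above.
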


The proof of this theorem is an immediate consequence of 
Definition \ref{densite} and a key reconstruction result proven in Section \ref{reconstruct} (Theorem \ref{mainrecon}) which gives the same minimal conditions for recovering the homotopy type of the support $\BBm$ from a sample of points $\bbx_n$ in $\BBm$. Theorem \ref{mainrecon} is ``deterministic'' and should have  wider applications. The key geometric ideas behind this result are in \cite{smale}, and in its extension in \cite{wang}, as applied to the approximation of Riemannian submanifolds. To get Theorem \ref{main2}, we weaken the regularity condition on the submanifolds from smooth to $C^{1,1}$, and in the hypersurface case we strengthen the bounds on the reach. This is then applied to thickenings of a positive reach set $M$ (see Section \ref{reconstruct}). It is important to contrast this result with earlier results in \cite{kimetal} (especially Theorem 19). There, the radii of the balls can be different. However, our theorem \ref{main2} is simpler to state and it is  easier to apply. 

Having stated our main results, which are mainly of probabilistic and topological interest, we can say a few words about the statistical implications. In practice, the point-cloud data are realizations of random variables living in unknown support $\BBm\subset\bbr^d$. We then ask to know if this support is a circle, a sphere, a torus, or a more complicated object. By taking sufficiently many points $\bbx_n$, our results tell us that the homology of $\BBm$ is the same as the homology of the union of balls around the data $\bigcup_{x\in\bbx_n} B(x,r)$, and this can be computed in general. The uniform radius $r$ depends on $\BBm$ only through its reach, which is then the only quantity we need to estimate or to know a priori. Knowing the homology rules out many geometries for $\BBm$.  Note that one may want to find ways to distinguish between a support that is a circle and one that is an annulus. However, conclusions of this sort are beyond the techniques of this paper.

\subsection{Contents} We now give some more details about the content of the paper and how it is organized. 
We start by establishing in Section \ref{reconstruct}  our homotopy reconstruction result of a support from a point cloud in the deterministic case. Everything afterward is of probabilistic nature, whereby point clouds are drawn from stationary random variables. In Section \ref{results} and Section \ref{examples} we state sufficient conditions in obtaining the asymptotically $(\epsilon,\alpha)$-dense property, that is conditions on concentrations and dependence coefficients under which $d_H(\bbx_n , {\BBm}) \leq \epsilon$ with large probability and for $n$ large enough.  More precisely, in Section \ref{results} we give general upper bounds for $d_H(\bbx_n,\BBm)$ using blocking techniques, i.e. by grouping the point cloud $\bbx_n$ into $k_n$ blocks, each block with $r_n$ points being considered as a single point in the appropriate Euclidean space of higher dimension.
This is stated in Proposition \ref{pro1}, which is the key result of this paper, where the control of $d_H(\bbx_n , {\BBm})$ is  reduced to the behavior of lower bounds of the concentration quantity of one block
\begin{equation}\label{low1}
\rho_{r_n}(\epsilon) = \inf_{x\in \BBm_{d{r_n}}}\BBp( \|(X_1,\cdots,X_{r_n})^t-x\|\leq \epsilon),
\end{equation}
and of
\begin{equation}\label{low2}
\inf_{x\in \BBm_{d{r_n}}}\BBp(\min_{1\leq i\leq k_n} \|(X_{(i-1)r_n+1},\cdots,X_{ir_n})^t-x\|\leq \epsilon),
\end{equation}
where, as before, $\BBm_{d{r_n}}$ is the support of the block  $(X_1,\cdots,X_{r_n})^t$. 
Clearly, for independent random variables,  a lower bound for (\ref{low1}) is directly connected to a lower bound for (\ref{low2}),  but this is not the case for dependent random variables, and we need to control (\ref{low1}) and (\ref{low2}) separately. Section \ref{examples} gives our main examples of stationary sequences of $\bbr^d$-valued random variables having good convergence properties, under the Hausdorff metric, to the support. For each example we check that conditions needed for the control of (\ref{low1}) and (\ref{low2}) can be reduced to conditions on  the concentration quantity $\rho_m(\epsilon)$ associated to the vector $(X_1,\cdots,X_m)^t$, for some fixed number of components $m\in \BBn\setminus\{0\}$.  In particular, for mixing sequences, the control of $d_H(\bbx_n,\BBm)$ is based on assumptions on the behavior of some lower bounds for this concentration quantity $\rho_m(\epsilon)$
in connection with the decay of the mixing dependence coefficients, as illustrated in Theorem \ref{maintheo}.
These lower bounds can be obtained by means of a condition similar to the so-called $(a,b)$-standard assumption
(see for instance \cite{Ch, CR2004, CR2009}) used in the case of i.i.d. sequences (i.e. when $k_n=n$ and $r_n=1$). However, our results in Section \ref{examples} generalize the case of i.i.d without assuming the $(a,b)$-standard assumption (Subsection \ref{rem}). 

Section \ref{sectionMarkov}  gives explicit illustrations of our main results and techniques in the case of stationary Markov chains. For this model, the quantities in (\ref{low1}) and (\ref{low2}) can be controlled from the behavior of a positive measure $\nu$ defining the transition probability kernel of this Markov chain, in particular from the lower bounds of the concentration quantity 
$\nu(B(x,\epsilon)\cap \BBm)$, for small $\epsilon$ and for $x\in \BBm$. The threshold $n_0$ can also be determined explicitly.
As a main illustration, the M\"obius Markov chain on the circle is studied in Subsection \ref{sectionmobius}, where $\BBm$ is the unit circle and $\nu$
is the arc length measure on the unit circle. The conditions leading to a suitable control of (\ref{low1}) and (\ref{low2}) are checked with no further assumptions and the threshold $n_0$ is   computed.

Section \ref{simu} gives an explicit simulation of a M\"obius Markov chain studied in \cite{Kato}. The intent here is to illustrate both the topological and probabilistic parts in an explicit situation. The simulation outcomes (Figures \ref{fig1} and \ref{fig2}) are in agreement with the theoretical results thus obtained. Finally, all deferred proofs appear in Section \ref{proofs}.

\vskip 5pt
\noindent{\sc Acknowledgements}: 
The first author would like to thank Sebastian Scholtes for insightful discussions on the material of Section \ref{reconstruct} and \cite{scholtes}. The second author is grateful to Sophie Lemaire for the present form of the proof of Lemma \ref{lem2}.


\section{ A reconstruction result}\label{reconstruct}

Given a point-cloud $S_n=\{x_1,\ldots, x_n\}$ on a metric space $M$, a standard problem is to reconstruct this space from the given distribution of points as $n$ gets large (see Introduction). Various reconstruction processes in the literature are based  on the Nerve theorem. This basic but foundational result can be found in introductory books in algebraic topology (\cite{hatcher}, chapter 4) and in most papers in manifold learning. This section takes a different route. 

Let's write below $B(x,r)$ (resp. $\mathring{B}(x,r)$) for the closed (resp. open) ball of radius $r$, centered at $x$. Starting with a point-cloud $S_n=\{x_1,\ldots, x_n\}\subset M$, with $M$ a compact subset of $\bbr^d$ with its Euclidean metric $\|\cdot\|$, we therefore seek conditions on some radius $r$ and on the distribution of the points of $S_n$ so that the union of balls $\bigcup_{i=1}^nB(x_i,r)$ deformation retracts onto $M$.
The $r$-\textit{offset} (or $r$-\textit{thickening} or $r$-\textit{dilation} or $r$-\textit{parallel set} depending on the literature) of a closed set $M$ is defined to be
$$M^{\oplus r} := \{p\in\bbr^d\ |\ d(p, M):=\inf_{x\in M}||x-p|| \leq r\}= \bigcup_{x\in M} B(x,r)$$
Many of the existing theorems in homotopic and homological inference are about offsets. In terms of those, the Hausdorff distance $d_H$ between two \textit{closed} sets $A$ and $B$, is defined to be
\begin{equation}\label{hd}
d_H(A,B) = \inf_{r>0}\{A\subset B^{\oplus r}, B\subset A^{\oplus r}\}=\max\left(\sup_{x\in A}\inf_{y\in B}\|x-y\|,\,\, \sup_{x\in B}\inf_{y\in A}\|x-y\|\right)
\end{equation}
(replacing $\inf$ and $\sup$ with $\min$ and $\max$ for compact sets).
This is a ``coarse'' metric in the sense that two closed spaces $A$ and $B$ can be very different topologically and yet be arbitrarily close in Hausdorff distance. 

We say that a subset $S\subset M$ is  $\epsilon$-dense (resp. strictly $\epsilon$-dense) in $M$, for some $\epsilon>0$, if $B(p,\epsilon)\cap S \neq\emptyset$ 
(resp. $\mathring{B}(p,\epsilon)\cap S \neq\emptyset$ ) for each $p\in M$. We have the following characterization.

\begin{lem} Let $S\subset M$ be a closed subset. Then 
$$S \ \hbox{is}\ \epsilon\hbox{-dense in $M$} \Longleftrightarrow M\subset S^{\oplus\epsilon}\ \Longleftrightarrow\ d_H(S,M)\leq\epsilon$$
\end{lem}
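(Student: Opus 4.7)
The plan is to prove the two equivalences essentially by unraveling the definitions, with the only subtle point being to pass between the statements ``$B(p,\epsilon)\cap S\neq \emptyset$'' and ``$d(p,S)\leq \epsilon$''. For this passage, the closedness hypothesis on $S$ is essential: since $S$ is closed in $\bbr^d$, for any $p\in\bbr^d$ and any $R>0$ the intersection $S\cap B(p,R)$ is compact, so $s\mapsto \|p-s\|$ attains its infimum on $S$. Consequently $d(p,S)\leq \epsilon$ if and only if there exists $s\in S$ with $\|p-s\|\leq \epsilon$, i.e.\ $B(p,\epsilon)\cap S\neq\emptyset$.

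For the first equivalence, ``$S$ is $\epsilon$-dense in $M$'' means by definition that for every $p\in M$, $B(p,\epsilon)\cap S\neq \emptyset$. By the observation above this is equivalent to $d(p,S)\leq \epsilon$ for every $p\in M$, which is precisely the statement $M\subset S^{\oplus \epsilon}=\{q\in\bbr^d\mid d(q,S)\leq \epsilon\}$.

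For the second equivalence, I would exploit the hypothesis $S\subset M$. Using the two-sided formula for the Hausdorff distance in (\ref{hd}),
\[
d_H(S,M)=\max\Bigl(\sup_{x\in S}\inf_{y\in M}\|x-y\|,\ \sup_{x\in M}\inf_{y\in S}\|x-y\|\Bigr),
\]
the first term vanishes since every $x\in S$ lies in $M$, so $\inf_{y\in M}\|x-y\|=0$. Hence $d_H(S,M)=\sup_{x\in M}d(x,S)$, and $d_H(S,M)\leq \epsilon$ is equivalent to $d(x,S)\leq \epsilon$ for every $x\in M$, i.e.\ $M\subset S^{\oplus \epsilon}$.

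There is no real obstacle; the only thing to remember is to justify the use of closedness in turning $d(p,S)\leq \epsilon$ into $B(p,\epsilon)\cap S\neq\emptyset$ (which fails in general for non-closed $S$, as one sees with $p=0$, $S=(0,1)$, $\epsilon=0$). Once that is written out, both equivalences are immediate from the definitions.
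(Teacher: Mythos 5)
Your proof is correct and follows essentially the same route as the paper's: both arguments reduce to the observation that $S\subset M$ kills one side of the Hausdorff max, so that $d_H(S,M)\leq\epsilon$ is just $M\subset S^{\oplus\epsilon}$, and then identify this with $\epsilon$-density. Your explicit justification that closedness of $S$ lets one pass from $d(p,S)\leq\epsilon$ to $B(p,\epsilon)\cap S\neq\emptyset$ is a point the paper leaves implicit, and is a welcome addition.
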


\begin{proof}
When $S\subset M$,  
$d_H(S,M) = \inf\{r>0\ | M\subset S^{\oplus r}\}$.
If $S$ is $\epsilon$-dense, any $p$ in $M$ is within $\epsilon$ of an $x\in S$, and so
$M\subset S^{\oplus\epsilon}$, which implies that
$d_H(S,M)\leq\epsilon$. The converse is immediate.
\end{proof}


From now on, $S$ will always mean a point-cloud in $M$; that is a finite collection of points. The following is a foundational result in the theory, and is our starting point.

\begin{theorem}\label{smaleprop} (\cite{smale}, Proposition 3.1) Let $M$ be a compact Riemannian submanifold of $\bbr^d$ with positive reach $\tau$, and
$S \subset M$ a strictly $\frac{\epsilon}{2}$-dense finite subset for $\epsilon < \sqrt{\frac{3}{5}}\tau$. Then for any $ r\in [\epsilon , \sqrt{\frac{3}{5}}\tau[$,  $\bigcup_{x\in S }\mathring{B}(x,r)\simeq M$. 
\end{theorem}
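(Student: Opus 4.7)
My plan is to produce an explicit strong deformation retraction from $U:=\bigcup_{x\in S}\mathring{B}(x,r)$ onto $M$ by combining nearest-point projection onto $M$ with a straight-line homotopy, and to control that homotopy using Federer's ``obtuse angle'' inequality for sets of positive reach.

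First, since $r<\tau$, the nearest-point projection $\pi:M^{\oplus r}\to M$ is well-defined and continuous; as $S\subset M$ we have $U\subset M^{\oplus r}$, so $\pi$ restricts to a continuous map $U\to M$. Strict $(\epsilon/2)$-density of $S$ together with $r\geq \epsilon>\epsilon/2$ ensures $M\subset U$ and $\pi|_M=\mathrm{id}_M$. Define $H:U\times[0,1]\to\BBr^d$ by $H(p,t)=(1-t)p+t\pi(p)$: at $t=0$ this is the inclusion, at $t=1$ it equals $\pi$, and $H$ fixes $M$ throughout. To conclude that $U\simeq M$ it is enough to verify the containment $H(U\times[0,1])\subset U$.

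Fix $p\in U$ and write $q=\pi(p)$, so $\|p-q\|=d(p,M)<r$. Pick $x_0\in S$ with $p\in\mathring{B}(x_0,r)$ and, by density, $x_1\in S$ with $\|q-x_1\|<\epsilon/2$. Setting $p_t=(1-t)p+tq$, the plan is to show that every $t\in[0,1]$ satisfies either $p_t\in\mathring{B}(x_0,r)$ or $p_t\in\mathring{B}(x_1,r)$. Expanding,
\[
\|p_t-x_1\|^2=(1-t)^2\|p-q\|^2+\|q-x_1\|^2+2(1-t)\langle p-q,\,q-x_1\rangle,
\]
and there is an analogous identity for $\|p_t-x_0\|^2$. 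The two-sided Federer inequality at $q=\pi(p)$ controls the cross terms by $|\langle p-q,\,y-q\rangle|\leq \frac{\|p-q\|}{2\tau}\|y-q\|^2$ for $y\in M$, so each squared distance becomes a quadratic in $t$ whose coefficients depend only on $r$, $\epsilon$ and $\tau$. An explicit computation then yields thresholds $t^{\ast}$ (before which $p_t$ stays in $\mathring{B}(x_0,r)$) and $t^{\ast\ast}$ (after which $p_t$ stays in $\mathring{B}(x_1,r)$), and the bound $5\epsilon^2<3\tau^2$ is precisely what forces $t^{\ast\ast}\leq t^{\ast}$, so that the two open balls cover the segment.

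The main obstacle is therefore the quantitative matching of these two thresholds: the constant $\sqrt{3/5}$ of Niyogi--Smale--Weinberger is exactly the critical value at which the two sliding-ball estimates overlap, and the algebra must be carried out carefully since any loosening of the obtuse-angle bound immediately weakens the constant. Once the containment $H(U\times[0,1])\subset U$ is established, $H$ is a continuous homotopy from $\mathrm{id}_U$ to $\pi$ that lies in $U$ and fixes $M$, yielding the required strong deformation retraction and hence $U\simeq M$.
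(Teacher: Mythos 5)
First, note that the paper does not actually prove this statement: it is imported verbatim from \cite{smale} (Proposition 3.1), and the only place the paper reworks the underlying geometry is in Proposition \ref{codimension0}, where it adapts Lemma 4.1 of \cite{smale}. So the relevant comparison is with the Niyogi--Smale--Weinberger argument itself. Your overall strategy coincides with theirs: retract $U=\bigcup_{x\in S}\mathring B(x,r)$ onto $M$ by the straight-line homotopy toward the nearest-point projection $\pi$, and verify that each segment $[p,\pi(p)]$ stays inside $U$ using Federer's quantitative obtuse-angle inequality. (One small point you should make explicit: Federer's Theorem 4.8(7) is one-sided; the ``two-sided'' version you invoke for the cross term $\langle p-q,\,q-x_1\rangle$ requires reflecting $p$ through $q$ along the normal fiber, which is legitimate here only because $M$ is a closed submanifold.)

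There is, however, a genuine gap at the quantitative core, which you defer to ``an explicit computation''. The engine of the NSW proof is their Lemma 4.1, which gives a dichotomy you never establish: for $p\in\mathring B(x_0,r)$ with $x_0\in S$, either $\pi(p)\in \mathring B(x_0,r)$ (and then convexity of that single ball finishes the segment), or else $d(p,M)<r^2/\tau$ --- a much stronger bound than the trivial $d(p,M)<r$. With $\delta:=d(p,M)<r^2/\tau$ in hand, a \emph{single} ball $\mathring B(x_1,r)$ around a sample point $x_1$ with $\|x_1-\pi(p)\|<\epsilon/2$ contains both endpoints, since $\|p-x_1\|^2\le \delta^2+\|x_1-\pi(p)\|^2(1+\delta/\tau)<\frac{r^4}{\tau^2}+\frac{\epsilon^2}{4}\bigl(1+\frac{r^2}{\tau^2}\bigr)<r^2$, and this last inequality is exactly where $r<\sqrt{3/5}\,\tau$ enters (taking $\epsilon=r$ in the worst case). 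Your two-ball covering scheme avoids this dichotomy but then must handle the regime where $\delta$ is close to $r$ and $\pi(p)\notin B(x_0,r)$; there, your estimate for $\|p_t-x_0\|^2$ requires a lower bound on $\langle p-x_0,\,p-q\rangle=\delta^2+\langle q-x_0,\,p-q\rangle$, and the Federer correction term involves $\|q-x_0\|^2$, for which the only bound available from your stated hypotheses is $(\delta+r)^2$. Carrying out the algebra with that bound, the threshold $t^{\ast}$ for the first ball fails to reach $t^{\ast\ast}$ unless $r$ is well below $\sqrt{3/5}\,\tau$ (roughly $r<\tau/2$ or worse). So as written the two thresholds do not match at the stated constant; you need to either import the $r^2/\tau$ bound of \cite{smale} Lemma 4.1 (after which the second ball alone covers the whole segment and the first ball is unnecessary), or supply a sharper estimate for $\|q-x_0\|$ than the triangle inequality. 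This missing step is precisely the content the citation to \cite{smale} is carrying.
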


\begin{remark}\rm Theorem \ref{smaleprop} is a topological ``reconstruction'' result which recovers the homotopy type of $M$ from a finite sample. There are many reconstruction methods in the literature that are too diverse to review here (see \cite{attali, divol, kimetal} and references therein). Reconstructions can be topological, meaning they recover the homotopy type or homology of the underlying manifold $M$, or they can be geometrical. We only address the topological aspect in this paper. In that regard, Corollary 10 in \cite{kimetal} is attractive for its simplicity as it proves a general reconstruction result for compact sets with positive reach by applying the nerve theorem to a cover by ``subspace balls'' $\mathcal U_M = \{B(x_i,r)\cap M\}$.  For Riemannian manifolds $M$, and thus in the $C^\infty$ case, there is an alternative intrinsic geometric method for homotopy reconstruction based on ``geodesic balls''. Let 
$\rho_c>0$ be a \textit{convexity radius} for $M$. Such a radius has the property that around each $p\in M$, there is a ``geodesic ball'' $B_g(p,\rho_c)$
which is convex, meaning that any two points in this neighborhood are joined by a unique geodesic in that neighborhood. These geodesic balls, and their non-empty intersections, are contractible. If $S_n=\{x_1,\ldots, x_n\}$ is a point-cloud such that
$ \{B_g(x_i,\rho_c)\}$ is a cover of $M$,  then $\bigcup_iB_g(x_i,\rho_c)\simeq M$ by the nerve theorem. 
\end{remark}

\subsection{Positive reach} The notion of positive reach is foundational in convex geometry. As indicated in the introduction, the reach of a  subset $M$ is defined to be
\begin{equation}\label{defreach}
\tau (M) := \hbox{sup}\{r\geq 0\ |\ \forall y\in M^{\oplus r}\,\, \exists !\ x\in M\ \hbox{nearest to $y$}\}
\end{equation}

A PR-set is any set $M$ with $\tau (M)> 0$. Compact submanifolds are PR. Figure \ref{positive} gives an example of a PR-set that is not a submanifold. The quintessential property of PR-sets is the existence, for $0<r<\tau$, of the ``unique closest point'' projection
\begin{equation}\label{projection}
\pi_M : M^{\oplus r}\lrar M\ \ ,\ \ 
||y - \pi_M(y)|| = d_H(y,M)
\end{equation}
with $\pi_M(y)$ the  unique nearest point to $y$ in $M$.
PR-sets are necessarily closed, thus  compact if bounded. 
\begin{figure}[htb]
\begin{center}
\epsfig{file=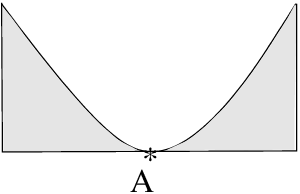,height=0.8in,width=1.5in,angle=0.0}
\caption{This space has positive reach $\tau$ in $\bbr^2$ but a neighborhood of point $A$ indicates it is not a  submanifold (with boundary). 
}\label{positive}
\end{center}
\end{figure}

 As already indicated, we use the notation $X\simeq  Y$, if $Y\subset X$, to denote the fact that $X$ deformation retracts onto $Y$. ``Thin enough'' offsets of PR-sets deformation retract onto $M$.

\begin{lem}\label{retract}
    Let $M$ be a PR-set with $\tau=\tau (M)>0$. Then $M^{\oplus r}\simeq M$, whenever $r<\tau$.
\end{lem}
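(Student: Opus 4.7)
The plan is to produce an explicit deformation retraction $H : M^{\oplus r} \times [0,1] \to M^{\oplus r}$ built from the nearest-point projection $\pi_M$ of (\ref{projection}). Define the straight-line homotopy
$$H(y,t) \;=\; (1-t)\,y + t\,\pi_M(y).$$
At $t=0$ this is the identity of $M^{\oplus r}$, at $t=1$ it is $\pi_M$, and for $y \in M$ we have $\pi_M(y) = y$, so $H(y,t) = y$ for all $t$. Hence, once we know $H$ is a continuous map into $M^{\oplus r}$, it witnesses $M^{\oplus r} \simeq M$.

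Two things need checking. First, that the image of $H$ lies inside $M^{\oplus r}$: for any $y \in M^{\oplus r}$ the segment from $y$ to $\pi_M(y)$ has length $\|y - \pi_M(y)\| = d(y,M) \le r$, and any intermediate point $z_t = (1-t)y + t\pi_M(y)$ satisfies
$$d(z_t, M) \;\le\; \|z_t - \pi_M(y)\| \;=\; (1-t)\,d(y,M) \;\le\; r,$$
so $z_t \in M^{\oplus r}$. Second, that $H$ is continuous, which reduces to the continuity of $\pi_M$ on $M^{\oplus r}$.

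The main (and only nontrivial) step is this continuity of $\pi_M$, and here I would invoke Federer's classical result \cite{federer}: on any open set of points with a unique nearest point in $M$, the projection $\pi_M$ is continuous; in fact the distance function $d(\cdot, M)$ is $C^{1,1}$ on $\{y : d(y,M) < \tau\}$ with gradient $(y - \pi_M(y))/d(y,M)$. Since our assumption $r < \tau$ places $M^{\oplus r}$ strictly inside this tubular neighborhood, $\pi_M$ restricted to $M^{\oplus r}$ is continuous, and the straight-line formula above is then continuous in $(y,t)$. Putting these pieces together yields the required strong deformation retraction, proving $M^{\oplus r} \simeq M$.
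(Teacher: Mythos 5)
Your proof is correct and follows essentially the same route as the paper's: both use the straight-line homotopy $(1-t)y+t\pi_M(y)$ along the unique nearest-point projection, checking that intermediate points stay in $M^{\oplus r}$ and that $M$ is fixed throughout. Your explicit appeal to Federer for the continuity of $\pi_M$ on $\{y: d(y,M)<\tau\}$ just makes precise a point the paper leaves implicit.
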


\begin{proof} This is immediate once we see that if $p\in M$ and $x\in \pi^{-1}_M(p)\subset M^{\oplus r}$, then the entire segment $[x,p]$ of $\bbr^d$ must be in $\pi_M^{-1}(p)$, so we can use the homotopy 
$F: M^{\oplus r}\times [0,1]\rightarrow M^{\oplus r}$,
$F(x,t)=(1-t)x+t\pi_M(x)$, $t\in [0,1]$, to get a (linear) deformation retraction, with $M$ being fixed during the homotopy.
\end{proof}

The original interest in sets of positive reach is that they have suitable small parallel neighborhoods with no self-intersections which allow one to compute their volume. This leads to a Steiner-type formula and a definition of curvature measures for these sets (see \cite{thale}). If $M$ is a compact  Riemannian submanifold in $\bbr^d$, as considered in \cite{smale}, then $\tau (M)$ is positive and is the largest number having the property that the open normal bundle about
$M$ of radius $r$ is smoothly embedded in $\bbr^d$ for every $r < \tau$. It is enough however for $M$ to be $C^2$ to ensure that $\tau (M)>0$ (\cite{thale}, Proposition 14), and that it is even enough to be $C^{1,1}$ in case $M$ is a closed hypersurface (see \cite{scholtes}), Theorem 1.3, which is an ``if and only if'' statement). We recall the definition of $C^{1,1}$ (\cite{hormander}, Def. 2.4.2.).

\begin{definition}
 A manifold $M\subset\bbr^d$ is said to have  $C^{1,1}$ boundary $\partial M$ if on a local neighborhood $U$ of $x_0\in \partial M$,
 $U\cap M = \{x\in M\ |\ x_1\geq f(x')\}$, where
 $x'=(x_2,\ldots, x_d)$, $f$ is $C^1$ and grad$(f)$ is Lipshitz continuous.
\end{definition}

What is crucial to us in this section are the next two results. For general discussion, we  refer to \cite{federer, fu} for Proposition \ref{riemannian}, and \cite{barb, scholtes} for Proposition \ref{normal} next. Throughout, a manifold is assumed to be compact, without boundary unless we specify the contrary. For $x\in\bbr^d$, let $d_M(x )=d(x , M ) = inf \{d(x,y), y \in M\}$ be the distance function to $M$. This function is $1$-Lipshitz, and it is continuously differentiable when restricted to the interior of $M^{\oplus r}\setminus M$ if $r<\tau$ (\cite{federer}, Theorem 4.8). Elementary pointset topology shows that the interior of the $r$-offset of $M$ is $int(M^{\oplus r}) = \bigcup_{x\in M}\mathring{B}(x,r) = d^{-1}_M[0,r)$, and the topological boundary is $d_M^{-1}(r)$. 

\begin{pro}\label{riemannian}\cite{federer} Let $M\subset\bbr^d$ be compact of positive reach $\tau$. For $0<r <\tau$, $M^{\oplus r}$ is a compact manifold with $C^{1,1}$-boundary.
\end{pro}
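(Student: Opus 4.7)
The plan is to identify $\partial M^{\oplus r}$ with a level set of the distance function $d_M(x) := d(x, M)$ and to deduce both its $C^{1,1}$ regularity and the manifold-with-boundary structure of $M^{\oplus r}$ from the classical regularity of $d_M$ inside the reach. First I would note that $M^{\oplus r}$ is closed and bounded, hence compact; by the elementary pointset topology already recalled just before the statement, its interior is $d_M^{-1}[0,r)$ and its topological boundary is $d_M^{-1}(r)$.

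The central ingredient is Federer's regularity theorem (\cite{federer}, Thm.~4.8) for the distance function inside the reach. On the open tube $\{x : 0 < d_M(x) < \tau\}$, the projection $\pi_M$ is single-valued and continuous, and $d_M$ is continuously differentiable with
$$\nabla d_M(x) \;=\; \frac{x - \pi_M(x)}{d_M(x)},$$
so in particular $\|\nabla d_M\| \equiv 1$ on this tube. Moreover, on any closed sub-tube $\{d_M \le r\}$ with $r < \tau$, the projection $\pi_M$ is Lipschitz with constant depending only on $r/\tau$, and this transfers directly to a Lipschitz bound for $\nabla d_M$ on a neighborhood of $\{d_M = r\}$. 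Hence $d_M$ is $C^{1,1}$ near the level set $\{d_M = r\}$ with non-vanishing gradient.

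With this regularity in hand, I would apply the $C^{1,1}$ implicit function theorem at any boundary point $x_0 \in \{d_M = r\}$: after a rotation of $\bbr^d$ aligning $\nabla d_M(x_0)$ with the $x_1$-axis, the level set is locally the graph $x_1 = f(x')$ of a $C^{1,1}$ function, and $M^{\oplus r}$ coincides locally with the sublevel set $\{x_1 \le f(x')\}$. This is precisely the defining condition of a $C^{1,1}$ manifold with boundary recalled just before the statement.

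The only genuinely delicate point is the Lipschitz control of $\nabla d_M$ on the sub-tube $\{d_M \le r\}$; this is the substantive content of Federer's theorem, and it hinges on the single-valuedness of $\pi_M$ up to distance $\tau$. Once this is granted, compactness, the identification of $\partial M^{\oplus r}$ as a level set of $d_M$, and the implicit function step are routine.
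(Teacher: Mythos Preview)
Your argument is correct and is in fact the standard route to this statement via Federer's Theorem~4.8: differentiability of $d_M$ on the open tube, the explicit gradient $\nabla d_M(x)=(x-\pi_M(x))/d_M(x)$ of unit norm, Lipschitz continuity of $\pi_M$ on closed sub-tubes (hence of $\nabla d_M$), and then the $C^{1,1}$ implicit function theorem to write the level set $\{d_M=r\}$ locally as a graph. Nothing is missing.

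That said, there is no proof in the paper to compare against. The paper states Proposition~\ref{riemannian} as a cited result, attributing it to Federer (and referring the reader to \cite{federer, fu} for discussion); it does not supply an argument of its own. Your proposal therefore \emph{adds} content rather than paralleling or diverging from anything in the text. If you intend to include it, you might tighten the phrasing slightly: the Lipschitz constant for $\pi_M$ on $\{d_M\le r\}$ is $\tau/(\tau-r)$ (Federer, Thm.~4.8(8)), which immediately gives the Lipschitz bound for $\nabla d_M$ and makes the ``only genuinely delicate point'' fully explicit.
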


Before stating the next Proposition, we make a definition: we say that a submanifold $M\subset\bbr^d$ has a tubular neighborhood ``foliated by orthogonal disks'' if it has a tubular neighborhood $T$ (i.e. an embedding of the normal bundle extending the embedding of $M$) with a well-defined continuous (unique) nearest point projection $\pi_M: T\rightarrow M$.

\begin{pro}\label{normal} 
 A closed submanifold $N$ in $\bbr^d$, $d\geq 2$, has a tubular neighborhood ``foliated by orthogonal disks'' if and only if it is $C^{1,1}$.
\end{pro}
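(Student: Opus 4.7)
The plan is to prove both implications by reducing them to the characterization that a closed submanifold of $\bbr^d$ has positive reach if and only if it is $C^{1,1}$, which in the hypersurface case is Scholtes' Theorem 1.3 in \cite{scholtes}, and in higher codimension goes back to Federer \cite{federer} (see also \cite{barb}). Once this is granted, the existence of a tubular neighborhood ``foliated by orthogonal disks'' is essentially a rephrasing of positive reach for a $C^1$ submanifold, and the two-way implication follows.

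For the ``if'' direction, I would assume $N$ is $C^{1,1}$ and invoke the regularity-reach equivalence to obtain $\tau := \tau(N) > 0$. Fix any $r \in (0,\tau)$ and set $T := N^{\oplus r}$. By (\ref{projection}), the nearest-point projection $\pi_N : T \to N$ is single-valued on $T$; continuity follows from the fact that $d_N$ is $1$-Lipschitz together with the uniqueness of minimizers. Because $N$ is at least $C^1$, the standard variational argument shows that for each $p \in N$ the vector $y - p$ for $y \in \pi_N^{-1}(p)$ must be orthogonal to $T_pN$, so $\pi_N^{-1}(p)$ is exactly the closed $r$-disk in the affine normal space $p + (T_pN)^{\perp}$. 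The family $\{\pi_N^{-1}(p)\}_{p \in N}$ is then the required foliation by orthogonal disks, and $T$ is the desired tubular neighborhood.

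For the ``only if'' direction, suppose $N$ admits a tubular neighborhood $T$ equipped with a continuous single-valued nearest-point projection $\pi_N : T \to N$. Since $T$ is open and contains $N$, there is $r > 0$ such that $N^{\oplus r} \subset T$, and on this offset every point has a unique closest point in $N$. By the definition (\ref{defreach}) this yields $\tau(N) \geq r > 0$, so $N$ is a submanifold of positive reach. Applying the positive reach $\Rightarrow$ $C^{1,1}$ direction of the equivalence cited above gives $N \in C^{1,1}$.

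The main obstacle is not logical but rather hides inside the black box just invoked: producing $C^{1,1}$ local graph parameterizations from the purely metric-geometric positive reach hypothesis. This is clean for hypersurfaces via Scholtes' double-ball argument, but in higher codimension one has to exploit the uniform tangent-cone and ball conditions enjoyed by sets of positive reach to show that local graphs have Lipschitz gradients; it is this regularity extraction, rather than the two directions of the proposition themselves, that constitutes the substantive work and is why the proof is best stated as a reduction to \cite{scholtes, barb, federer}.
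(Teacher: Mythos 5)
Your proof is correct and follows essentially the same route as the paper: both reduce the statement to the equivalence between $C^{1,1}$ regularity and positive reach (Theorem 1.3 of \cite{scholtes}, with \cite{federer, barb} in higher codimension) and identify the tubular neighborhood with the points at distance less than $\tau$ from $N$. You simply spell out in more detail the two implications (orthogonality of the projection fibers, and the converse extraction of positive reach from the foliated neighborhood) that the paper leaves implicit.
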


\begin{proof} This is a consequence of Theorem 1.3 of \cite{scholtes} which proves that $N$ is $C^{1,1}$ if and only if it has positive reach $\tau$. The tubular neighborhood $T$ is then all points a distance stricly less than $\tau$ from $N$. 
\end{proof}

\begin{remark}\rm A very informative discussion about the above is on MO \cite{mo}, and the point is this. In the $C^1$-case, the choice of the (unit, outer) normal vector at every point of $N$ is a continuous function (this is by definition the Gauss map). In fact if $N$ is $C^k$, then the choice of a normal $N\rightarrow\bbr^n$ is $C^{k-1}$ (see \cite{barb}, Lemma 4.6.18). If we have $C^1$-regularity but not $C^{1,1}$, it could happen that the normals intersect arbitrarily close to the hypersurface, in which case the reach is $0$ indeed. A good example to keep in mind, and which we owe to S. Scholtes\footnote{Private communication.}, is the graph of the real-valued function which is $0$ for $x\leq 0$ and $x^{3/2}$ for $x\geq 0$. This function is $C^{1, {1/2}}$, not $C^{1,1}$, and one observes that near $0$, the normals intersect arbitrarily close to the curve. 
\end{remark}

If $M$ is a compact PR-set, its offset $M^{\oplus r}$ is also compact and PR for $r<\tau$, with reach $\tau-r$, where $\tau$ is the reach of $M$.  This assertion is not entirely evident, since generally, the reach is not always well-behaved for nested compact sets. By this we mean that if $(K_2,K_1)$ is a pair of nested compact sets in $\bbr^d$, $K_1\subset K_2$, then both cases $\tau_1<\tau_2$ or $\tau_2<\tau_1$ can occur, where $\tau_i$ is the reach of $K_i$. For example and in the former case, take $K_1$ to be the circle and $K_2$ to be the closed disk, while for the latter case, take $K_1$ to be a point in a finite reach $K_2$. The case of $(K_2,K_1) = (M^{\oplus r}, M)$ is therefore special.

\begin{lem}\label{reachoffset} Let $M\subset\bbr^d$ be a compact PR-set with reach $\tau$, and $0\leq r<\tau$, then $M^{\oplus r}$ has positive reach with $\tau (M^{\oplus r})=\tau-r>0$. \end{lem}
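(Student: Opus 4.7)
The plan is to establish the equality $\tau(M^{\oplus r}) = \tau - r$ by proving both inequalities separately. Throughout I would exploit the unique nearest-point projection $\pi_M$ from (\ref{projection}), which is available on $M^{\oplus \tau'}$ for every $\tau' < \tau$, together with the elementary observation that for $z \in M^{\oplus r}$ with $r < \tau$, the projection $\pi_M(z)$ is well-defined since $d(z,M) \leq r < \tau$.

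For the lower bound $\tau(M^{\oplus r}) \geq \tau - r$, I would take any $y$ with $d(y, M^{\oplus r}) < \tau - r$ and observe that $d(y, M) \leq d(y, M^{\oplus r}) + r < \tau$, so $x := \pi_M(y)$ is well-defined and unique. If $y \in M^{\oplus r}$ the claim is trivial; otherwise I would show that the unique nearest point to $y$ in $M^{\oplus r}$ is $z_0 := x + r(y-x)/\|y-x\|$. For any candidate $z \in M^{\oplus r}$ with projection $x' := \pi_M(z) \in M$, the triangle inequality gives
\[
\|y - z\| \geq \|y - x'\| - \|x' - z\| \geq d(y, M) - r = \|y - z_0\|.
\]
Tracing the cases of equality forces $\|y - x'\| = d(y, M)$, hence $x' = x$ by uniqueness of $\pi_M(y)$; then $\|x' - z\| = r$ and collinearity of $z$ with $x$ and $y$, which pin $z = z_0$.

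For the upper bound $\tau(M^{\oplus r}) \leq \tau - r$, I would use the characterization of the reach as the distance from $M$ to its medial axis (points of $\bbr^d$ with more than one nearest point in $M$), which is standard from \cite{federer}. For every $\epsilon \in (0,\tau - r)$ there exists a point $y$ with two distinct nearest points $x_1 \neq x_2$ in $M$ and $d(y, M) \leq \tau + \epsilon$; moreover $d(y, M) \geq \tau$, since any point closer than $\tau$ has a unique projection by definition of reach. In particular $d := d(y, M) > r$, so the lifted points $z_i := x_i + r(y - x_i)/d$ lie in $M^{\oplus r}$ and each satisfies $\|y - z_i\| = (1 - r/d)\, d = d - r$, which coincides with $d(y, M^{\oplus r})$ by the lower-bound step. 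A direct computation shows $z_1 = z_2$ would entail $(1 - r/d)(x_1 - x_2) = 0$, contradicting $r < d$ and $x_1 \neq x_2$. Hence $y$ has two distinct nearest points in $M^{\oplus r}$ at distance $d - r \leq \tau - r + \epsilon$, so $\tau(M^{\oplus r}) \leq \tau - r + \epsilon$; letting $\epsilon \to 0$ closes the argument.

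The main obstacle I anticipate is the upper-bound direction: it depends on the nontrivial fact that if $M$ has finite reach $\tau$, then medial-axis points exist at distance arbitrarily close to $\tau$ from $M$. I would quote this from \cite{federer} rather than reprove it, and handle the degenerate convex case $\tau = \infty$ (where $M^{\oplus r}$ is also convex with infinite reach) separately or by convention. A secondary subtlety is that the "lifting" $x \mapsto x + r(y-x)/d$ preserves distinctness only because $r < d$, a point that is guaranteed by the lower bound $d \geq \tau > r$ just noted.
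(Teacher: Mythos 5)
Your proof is correct, and the lower bound $\tau(M^{\oplus r})\geq\tau-r$ is essentially the paper's argument: both identify the unique nearest point in $M^{\oplus r}$ as the point of $[y,\pi_M(y)]$ at distance $r$ from $\pi_M(y)$ and trace equality in the triangle inequality. Where you genuinely diverge is the upper bound. The paper proves the direct implication ``unique projection onto $M^{\oplus r}$ $\Rightarrow$ unique projection onto $M$,'' which transfers the uniqueness region inward and, as a byproduct, yields the nesting identity $M^{\oplus r'}=(M^{\oplus r})^{\oplus(r'-r)}$. You instead argue by exhibiting witnesses of non-uniqueness: medial-axis points of $M$ at distance in $[\tau,\tau+\epsilon]$, lifted along the two segments to a pair of distinct equidistant points of $M^{\oplus r}$. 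This is a clean, explicitly constructive route, and the existence fact you propose to quote from Federer is actually immediate from the definition \eqref{defreach} of the reach as a supremum over closed offsets (if no such points existed below $\tau+\epsilon$, the supremum would exceed $\tau$), so it costs nothing external. One presentational slip: you justify $d(y,M^{\oplus r})=d(y,M)-r$ for the medial-axis point ``by the lower-bound step,'' but that step was set up under the hypotheses $d(y,M^{\oplus r})<\tau-r$ and uniqueness of $\pi_M(y)$, neither of which holds here; the inequality you actually need, $\|y-z\|\geq d(y,M)-r$ for all $z\in M^{\oplus r}$, follows from $d(y,M)\leq\|y-z\|+d(z,M)\leq\|y-z\|+r$ with no uniqueness required, and your lifted points attain it, so this is easily repaired. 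Your explicit treatment of the convex case $\tau=\infty$ is a point of care the paper passes over silently.
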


\begin{proof} Essentially, the point is that any ray from $y\not\in M^{\oplus r}$ to $M$ must cut the boundary $\partial M^{\oplus r}$ at a point a distance $r$ to $M$.
Suppose $r>0$, so that $M^{\oplus r}$ is a codimension $0$-manifold with boundary $\partial M^{\oplus r}$ in $\bbr^d$. Write $\tau_r$ its reach. We will first prove that if $y$ in the complement of $M^{\oplus r}$ has a unique projection onto $M$, then necessarily it has a unique projection onto $M^{\oplus r}$ (this will prove that $\tau_r>0$ and that $\tau - r\leq \tau_r$). Reciprocally, we will argue that is if $y$ has a unique projection onto $M$, then it also has a unique projection onto $M^{\oplus r}$.

To prove the first claim,  write  $y_1 = [y,\pi_M(y)]\cap\partial M^{\oplus r}$. We claim that $y_1$ is the unique closest point to $y$ in $M^{\oplus r}$.  Indeed if there is $z_1$ on that boundary that is closer to $y$, then
$$d(y,\pi_M(z_1))\leq d(y,z_1) + d(z_1,\pi_M(z_1)) = d(y,z_1)+r
\leq d(y,y_1) + d(y_1,M)
= d(y,\pi_M(y))$$
and so, $d(y,\pi_M(z_1))=d(y,\pi_M(y))$ (since $d(y,\pi_M(y))$ is smallest distance of $y$ to $M$), and by uniqueness, $\pi_M(z_1)=\pi_M(y)$. This implies that $d(y,z_1) + d(z_1,M)=d(y,M)$, and so $y,z_1,\pi_M(y)$ are aligned. This can only happen if $y_1=z_1$.

Suppose now that $y$ has a unique projection onto $M^{\oplus r}$ which we label $y'$. We can check that it also has a unique projection onto $M$.  Let $z$ be that projection. By a similar argument as previous, $z$ must be $\pi_M(y')$ (so unique) and $y,y',z$ are aligned. This shows reciprocally that $\tau_r+r\leq \tau$.

The above arguments show that $\tau = \tau_r+r$, and in fact they can be used to show that
$$M^{\oplus r'}= (M^{\oplus r})^{\oplus (r'-r)} $$
for all $r\leq r'< \tau$.
\end{proof}

\subsection{Manifolds with boundary}\label{hausdorff}
In order to apply our ideas to PR sets, we need to extend Theorem \ref{smaleprop} from closed Riemannian submanifolds to submanifolds with boundary. Note that the reach of $\partial M$ (manifold boundary) and $M$ are not comparable in general. Indeed, take
$M$ to be the $y=\sin (x)$ curve on $[0,\pi]$, with boundary the endpoints. Then $\tau (M) < \tau (\partial M)$. Take now a closed disk $M$ in $\bbr^2$. Then $\tau (\partial M)<\tau (M)=\infty$. If $M$ is of codimension $0$, then $\tau (\partial M)\leq\tau (M)$ always.

In \cite{wang}, the authors managed to extend Theorem \ref{smaleprop} to smooth submanifolds with boundary, and showed in this case, that the bound $\sqrt{3\over 5}\tau$ in Theorem \ref{smaleprop} can be replaced by $\delta\over 2$, where $\delta = \min (\tau (M), \tau (\partial M))$. 
We revisit this result in the codimension $0$-case and refine it.  

\begin{pro}\label{codimension0} 
 Let $M$ be a compact codimension $0$-submanifold of $\bbr^d$ with $C^{1,1}$-boundary and having positive reach $\tau=\tau (M)>0$, and let
$S \subset M$ be an $\epsilon\over 2$-dense finite subset with $\epsilon < {\tau\over 2}$. Then for any $r$ such that $\epsilon\leq r<  {\tau\over 2}$,  $\bigcup_{x\in S} {B}(x,r)\simeq M$.
\end{pro}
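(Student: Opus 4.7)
The strategy is to show that the linear deformation retraction of $M^{\oplus r}$ onto $M$ given by $L_t(x) = (1-t)x + t\pi_M(x)$ (Lemma~\ref{retract}) restricts to a deformation retraction of $U := \bigcup_{x \in S} B(x,r)$ onto $M$. A preliminary observation is that $M \subset U \subset M^{\oplus r}$, and in fact $M^{\oplus(r - \epsilon/2)} \subset U$: for any $y$ with $d(y,M) \leq r - \epsilon/2$, pick $x_k \in S$ within $\epsilon/2$ of $\pi_M(y)$, so $\|y - x_k\| \leq r$. This yields a thick ``safe'' inner region inside $U$ where short linear flows automatically land.

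The critical step is to check $L_t(U) \subset U$ for every $t \in [0,1]$. For $x \in M$ this is trivial since $L_t(x) = x$. Otherwise $x \notin M$, so $p := \pi_M(x)$ lies on $\partial M$ (an interior nearest point could be improved along the segment $[x,p]$) and $d := \|x - p\| > 0$. Fix $x_i \in S$ with $x \in B(x_i,r)$ and write $q = \|x - x_i\| \leq r$. If $d \leq r - \epsilon/2$, then $d(L_t(x), M) = (1-t)d \leq r - \epsilon/2$ places $L_t(x)$ inside $M^{\oplus(r - \epsilon/2)} \subset U$. Otherwise $d > r - \epsilon/2$, and I work in coordinates centered at $p$ with last axis along the outer unit normal $n$ to $\partial M$ at $p$ (unique since $\partial M$ is $C^{1,1}$), writing $x = (0,\ldots,0,d)$ and $x_i = (a,b)$. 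A direct expansion gives
\[
\|L_t(x) - x_i\|^2 - q^2 \;=\; t\,d\bigl(2b - 2d + t\,d\bigr),
\]
so $\|L_t(x) - x_i\| \leq q \leq r$ for all $t \in [0,1]$ provided that $b \leq d/2$.

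The reach hypothesis forces $\mathring{B}(p + \tau n, \tau) \cap M = \emptyset$, hence $|a|^2 + b^2 \geq 2\tau b$; combined with $|a|^2 + (b-d)^2 = q^2 \leq r^2$, this yields $b \leq (r^2 - d^2)/(2(\tau - d))$, and this upper bound is $\leq d/2$ precisely when $d \geq r^2/\tau$. The hypotheses $r < \tau/2$ and $\epsilon \leq r$ give $r - \epsilon/2 \geq r/2 > r^2/\tau$, so the annular regime $d > r - \epsilon/2$ automatically forces $d > r^2/\tau$ and hence $b \leq d/2$, whence $L_t(x) \in B(x_i, r) \subset U$. The main obstacle is precisely this interlocking of inequalities---extracting $b \leq d/2$ from the reach constraint and matching the cutoff $r - \epsilon/2$ against the threshold $r^2/\tau$ via $r < \tau/2$. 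Once $L_t(U) \subset U$ is verified, $L_t|_U$ is continuous with $L_0|_U = \mathrm{id}_U$, $L_1(U) \subset M$, and $L_t|_M = \mathrm{id}_M$, so $U$ strong-deformation retracts onto $M$, giving $\bigcup_{x \in S} B(x,r) \simeq M$.
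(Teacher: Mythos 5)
Your proof is correct, and it follows the same overall strategy as the paper's: both restrict the linear normal retraction $L_t(x)=(1-t)x+t\pi_M(x)$ of $M^{\oplus r}$ onto $M$ to the union $U=\bigcup_{x\in S}B(x,r)$, and both ultimately hinge on the very same inequality $r^2/\tau< r-\epsilon/2$ (the paper writes it as $\epsilon/2+r^2/\tau<r$) combined with the tangent-ball consequence of positive reach. Where you genuinely differ is in how the containment $L_t(U)\subset U$ is verified. The paper takes a flowed point $v$ on the normal fiber over $p\in\partial M$ lying in some ball $B(q,r)$ with $q\in S$ but $p\notin B(q,r)$, bounds $d(v,p)<r^2/\tau$ by quoting Lemma 4.1 of \cite{smale}, and then captures $v$ and $p$ together inside a \emph{different} ball $B(x,r)$ centered at a sample point $x\in B(p,\epsilon/2)\cap S$, using convexity of balls. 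You instead split according to $d=d(x,M)$: for $d\le r-\epsilon/2$ the entire flow line stays in the inner offset $M^{\oplus(r-\epsilon/2)}$, which the $\epsilon/2$-density of $S$ shows is contained in $U$; for $d>r-\epsilon/2$ you prove the flow line never leaves its \emph{original} ball $B(x_i,r)$, by extracting $b\le d/2$ from the empty ball $\mathring{B}(p+\tau n,\tau)\cap M=\emptyset$ together with $q\le r$, and checking that $2b-2d+td\le 2b-d\le 0$ for all $t\in[0,1]$. This buys two things: the argument is self-contained (the key estimate is computed directly from the reach rather than imported from \cite{smale}), and it replaces the paper's informal ``worst case scenario'' reasoning by an explicit coordinate computation valid for every configuration of $x$, $p$, $x_i$. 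The hypotheses are used identically in both proofs ($\epsilon\le r<\tau/2$ gives $r-\epsilon/2\ge r/2>r^2/\tau$), so nothing is lost in generality.
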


Notice that $M$ need not be connected. Notice also that we have weakened the regularity on $\partial M$ from smooth to $C^{1,1}$. According to \cite{scholtes}, Theorem 1.3) (see also \cite{federer}, Remark 4.20), this condition is enough to ensure that $\tau (\partial M)>0$. Finally, notice  that we use closed balls in our statement, and that they may have larger radius than $\epsilon$, but not exceeding $\tau\over 2$.

\begin{proof}  The proof is an adaptation of Lemma 4.1 of \cite{smale} and Lemma 4.3 of \cite{wang} for smooth submanifolds. For completeness, we will reconstruct the part of the argument that we need.

Firstly, since the reach of the disjoint finite union of PR sets is the least of their reaches and their pairwise distances, we can assume without loss of generality that $M$ is connected from the start.
Let $M$ be of connected of codimension $0$. Then its boundary is a connected codimension $1$ closed submanifold (i.e. a closed hypersurface). It divides Euclidean space into two regions. 
We let $\tau^-$ denote the reach of a component of $\partial M$ in the region contained in $M$ (the interior region), and $\tau^+$ its reach within the other open region (exterior region). Clearly $\tau:=\tau (M)=\tau^+$.

Now $\partial M$ is $C^{1,1}$ by hypothesis, and being a hypersurface, it is necessarily orientable. It has a continuous normal vector field into the exterior region, defining a (trivial) subbundle $T(\partial M)^+$ of $T(\partial M)$. We write $T_p^{\perp,+}(\partial M)$ the fiber at $p\in\partial M$ which is a half line extending into the exterior region, perpendicular to $T_p(\partial M)$. Linear deformation retraction along this direction as in Lemma \ref{retract}, keeping $M$ fixed, shows that $M^{\oplus r}\simeq M$, as long as $r<\tau^+=\tau $ (where normal directions never intersect). We have that
$$M^{\oplus r} = \bigcup_{x\in M}B(x,r)\simeq M\ ,\ r<\tau $$
We want that this retraction of $M^{\oplus r}$ onto $M$ (along fibers of $T^+(\partial M)$) restricts to a deformation retraction of the middle space $W_S$, figuring in the sequence of inclusions below, onto $M$
$$M\subset W_S:=\bigcup_{x\in S}B(x,r)\subset M^{\oplus r}\ ,\ {\epsilon\over 2}\leq r<\tau $$
That is we only take the union of balls centered at points of $S$. This covers $M$ since $r\geq {\epsilon\over 2}$.
Let's see how the deformation retraction of the bigger space $M^{\oplus r}$ onto $M$ may fail to restrict to a retraction on $W_S$: let $v\in T_p^{\perp,+}(\partial M)$, and suppose $v\in B(q,r)$, with $q\in S$ but $q\not\in B(p,r)$. So the line segment $[v,p]$ is not in the ball $B(q,r)$, and the linear retraction will leave that ball eventually. This however will not cause a problem as long as the segment falls in another ball and does not get out of the entire union 
$\bigcup_{x\in S}B(x,r)$. This happens if both $v,p$ are in some other ball $B(x,r)$, $x\in S$ (because balls are convex). Now, any such $x$ is at most a distance $\epsilon\over 2$ from $p$, as illustrated in the figure below.
\begin{figure}[htb]
\begin{center}
\epsfig{file=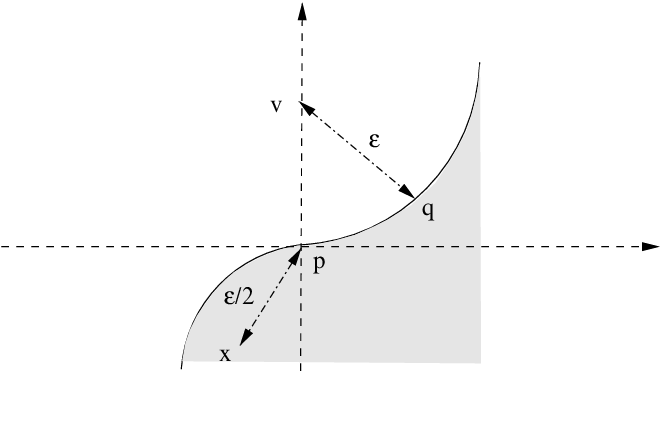,height=1.8in,width=2.3in,angle=0.0}
\caption{An extreme disposition of points, $x,q\in S $ and $p,q\in\partial M$. The points $q,p$ are on a circle tangent to $T_p(M)$, of radius $\tau$ and center on the vertical dashed line representing the normal direction $T_p^{\perp,+}(M)$, pointing in the exterior region, while $x$ is anywhere in $M\cap S$ at a distance at most ${\epsilon\over 2}$ from $p$.}\label{boundary}
\end{center}
\end{figure}

With the above target in mind, consider the following configuration of points: $p\in\partial M$, $v\in T_p^{\perp,+}(\partial M)\cap B(p,\tau)$, 
$q\in S$ and $v\in B(q,r)$ but
$p\not\in B(q,r)$. How far can $v$ be from $p$, among all choices of such points $q$? The answer can be extracted from the key Lemma 4.1 of \cite{smale}.
The ``worst case scenario'', corresponding to when $v$ would be further from $p$, is when $q$ and $p$ lie on the circle of radius $\tau$, with center in $T_p^{\perp,+}$ as in the figure (i.e. $p,q,v$ making up an isoceles triangle with two sides equal to $r$).  Lemma 4.1 of \cite{smale} applied to this situation gives that
$\displaystyle d(v,p)< {r^2\over \tau^+} = {r^2\over\tau} <\tau$.

Next, we look for an $x\in B(p,{\epsilon\over 2})\cap S\neq\emptyset$ which is within distance $r$ from $v$. The worst case scenario again (that is the furthest this $x$ can be from $v$) is when $v,p, x$ are aligned and $d(p,x)={\epsilon\over 2}$. Note that this does not happen if $\tau^-$ is very small compared to $\epsilon\over 2$ for example, but again, we only care about the extreme case. Combining with the previous inequality, we see that in all cases, for any such choice of $x$, necessarily $\displaystyle d(v,x)\leq {\epsilon\over 2} + {r^2\over \tau}$.  This is all illustrated in our Figure \ref{boundary} which is the analog of Fig.2 of \cite{smale} and Fig.1 of \cite{wang}. 
Since we want $v\in B(x,r)$, it is enough to require that 
\begin{equation}\label{restriction}
   {\epsilon\over 2} + {r^2\over \tau}< r
\end{equation}
or $r^2-r\tau + {\epsilon\tau\over 2}<0$. Any value of $r$ between the roots of this equation does the job, and it is immediate to see that $r\in \left[\epsilon, {\tau\over 2}\right[$ satisfies this condition. The proof is complete.

Note that in the case of Theorem \ref{smaleprop} of \cite{smale}, that is when one is considering $M$ that is closed (i.e. no boundary),  the point $x$ to be chosen couldn't be ``anywhere'' possibly around $p\in M$ as in Figure \ref{boundary}, but had to lie on $M$ (which would be the boundary in that figure), and thus the authors get a different bound on $r$. 
\end{proof}

We finally come to the proof of the main reconstruction result of this section, and of which Theorem \ref{main2} is a consequece\footnote{We thank the referee for suggesting this simpler statement than the one we originally gave.}. 

\begin{theorem}\label{mainrecon} Let $M$ be a compact space in $\bbr^d$ with positive reach $\tau$, let $\epsilon\in \left(0, {\tau\over 2}\right)$ and $r\in \left(\epsilon, {\tau\over 2}\right)$. If $S\subset M$ is ${\epsilon\over 2}$-dense, then $\displaystyle\bigcup_{x\in  S }B(x,r)\simeq  M$. 
\end{theorem}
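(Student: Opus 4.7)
The plan is to reduce the theorem to Proposition \ref{codimension0} by passing from $M$ to one of its thin offsets. The set $M$ itself need not be a manifold, so Proposition \ref{codimension0} does not apply directly; however, for any $s\in (0,\tau)$ the offset $M^{\oplus s}$ is a compact codimension-$0$ submanifold of $\bbr^d$ with $C^{1,1}$-boundary by Proposition \ref{riemannian}, has reach exactly $\tau-s$ by Lemma \ref{reachoffset}, and deformation retracts onto $M$ by Lemma \ref{retract}. So the strategy is to prove the retraction $\bigcup_{x\in S}B(x,r)\simeq M^{\oplus s}$ using Proposition \ref{codimension0} applied to the manifold $M^{\oplus s}$, and then compose with the retraction $M^{\oplus s}\simeq M$.

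First I would fix a sufficiently small parameter $s>0$. Because the hypotheses $\epsilon<r<\tau/2$ are strict, the quantity $\min\!\bigl((r-\epsilon)/2,\ \tau-2r\bigr)$ is strictly positive, and I would choose $s$ strictly less than both $(r-\epsilon)/2$ and $\tau-2r$. This yields the two inequalities
\[
2s+\epsilon \;\leq\; r \qquad \text{and}\qquad r \;<\; \tfrac{\tau-s}{2},
\]
which are precisely what is needed to match the hypotheses of Proposition \ref{codimension0} on $M^{\oplus s}$, whose reach is $\tau-s$.

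Next I would verify the required density of $S$ in $M^{\oplus s}$. Given $p\in M^{\oplus s}$, the projection $\pi_M(p)\in M$ satisfies $\|p-\pi_M(p)\|\leq s$, and by the $\tfrac{\epsilon}{2}$-density of $S$ in $M$ there exists $x\in S$ with $\|\pi_M(p)-x\|\leq \tfrac{\epsilon}{2}$. The triangle inequality gives $\|p-x\|\leq s+\tfrac{\epsilon}{2}=\tfrac{\epsilon'}{2}$ where $\epsilon':=2s+\epsilon$, so $S$ is $\tfrac{\epsilon'}{2}$-dense in $M^{\oplus s}$. The inequalities $\epsilon'\leq r<\tfrac{\tau-s}{2}$ hold by the choice of $s$, so Proposition \ref{codimension0} applies to $M^{\oplus s}$ and delivers the deformation retraction $\bigcup_{x\in S}B(x,r)\simeq M^{\oplus s}$. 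Concatenating with the homotopy from Lemma \ref{retract} produces the desired deformation retraction $\bigcup_{x\in S}B(x,r)\simeq M$.

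The only delicate point is the feasibility of the choice of $s$: after offsetting, the reach drops from $\tau$ to $\tau-s$, so the target bound $(\tau-s)/2$ shrinks; when $r$ is close to $\tau/2$ this leaves very little room for $s$. It is exactly for this reason that the hypothesis puts $r$ strictly inside $(\epsilon,\tau/2)$, and that is enough to guarantee a strictly positive $s$ satisfying both required inequalities. Apart from that, the argument is essentially a transfer of Proposition \ref{codimension0} through the offset regularization.
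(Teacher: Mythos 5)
Your proof is correct and follows the same route as the paper: thicken $M$ to the offset $M^{\oplus s}$, which is a compact codimension-$0$ submanifold with $C^{1,1}$ boundary and reach $\tau-s$, apply Proposition \ref{codimension0} there with the adjusted density parameter $\epsilon'=2s+\epsilon$, and compose with the retraction of the offset onto $M$. Your choice $s<\min\left(\frac{r-\epsilon}{2},\,\tau-2r\right)$ is a slightly cleaner bookkeeping of the same constraints the paper encodes via its parameter $\delta$.
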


\begin{proof} 
 Notice that by Proposition \ref{codimension0}, the result is true if $M$ is a codimension $0$ submanifold whose boundary is $C^{1,1}$ of reach $\tau>0$ (let's refer to this submanifold as a ``good object''). The idea now is very simple and relies on the fact that, if $M$ itself is not such a good object but is of positive reach, then any slight thickening of it will be a good object.

Suppose $S $ is an ${\epsilon'-\delta\over 2}
$-dense sample in $M$, $S \subset M$, then it is an 
$\epsilon'\over 2$-dense sample in $M^{\oplus \delta/2}$. This offset is a codimension $0$ submanifold of $\bbr^d$, with $C^{1,1}$ boundary, and containing $M$.
Its reach is $\tau'=\tau-{\delta\over 2}$ (Lemma \ref{reachoffset}). Proposition \ref{codimension0} implies then that
for all $\epsilon'\leq r< {\tau-\delta/2\over 2}$,
$\bigcup_{x\in S }B(x,r)\simeq   M^{\oplus\delta/2}$. But if we choose ${\delta\over 2}<\tau$,
$M^{\oplus\delta/2}$ deformation retracts onto $M$ (Lemma \ref{riemannian}), and  the composition of both deformation retractions gives a deformation retraction
$\bigcup_{x\in S }B(x,r)\simeq   M$.

Set $\epsilon = \epsilon'-\delta$.
What the above proves is that
 if $S $ 
is $\displaystyle{\epsilon\over 2}$-dense in $M$, then for any $r$ such that
\begin{equation}\label{inegalite}\epsilon+\delta\leq r< {\tau\over 2}-{\delta\over 4}
\end{equation}
where $\delta$ is any number such that
 $\displaystyle 0<\delta < {2\over 5}\tau$ and
$\displaystyle 0<\epsilon < {\tau\over 2}-{5\delta\over 4}$, we must have that
 $\displaystyle\bigcup_{x\in  S }B(x,r)\simeq  M$. Now, given $0<\epsilon<{\tau\over 2}$ and $\epsilon <r< {\tau\over 2}$ (as in the hypothesis), set
$$\delta = \min\left\{{1\over 2}\cdot {4\over 5}\left({\tau\over 2}-\epsilon\right), r-\epsilon, 4\left({\tau\over 2}-r\right)\right\}$$
Then by construction, and for that $\delta$, one can check that \eqref{inegalite} is satisfied, and hence the desired deformation retraction is ensured.
\end{proof}


\section{Blocking techniques and upper bounds for the Hausdorff distance}\label{results}

This section states and proves the main technical result of this paper. It is given by Proposition 
\ref{pro1} below, which is general and of independent interest.  It is based on  blocking techniques as well as a useful geometrical result, proven in \cite{smale}, relating the minimal number of a covering of a compact set by closed balls to the maximal length of chains of points whose pairwise distances are bounded below.

Let $(X_i)_{i\in \BBt}$ (where $\BBt$ is either $\bbz$ or $\BBn$ or  $\BBn\setminus \{0\}$) be a stationary sequence of $\BBr^d$-valued random variables. Let $P$ be the distribution of $X_1$. Suppose that
$P$ is supported on a compact set $\BBm$ of $\BBr^d$, i.e. $\BBm := {\mbox{supp}} (X_j)$ is the smallest closed set carrying the mass of $P$;
\begin{equation}\label{supportdef}
\BBm=\bigcap_{C\subset \BBr^d,\ 
P(\overline{C})=1}\overline{C},
\end{equation}
where $\overline{C}$ means the closure of the set $C$ in Euclidean space.
Recall that $\bbx_n=\{X_1,\cdots,X_n\}$  and this is viewed as a 
subset of $\BBr^d$.
Throughout, we will be working with the Hausdorff distance $d_H$ \eqref{hd}. Note that $d_H(\{x\},\{y\})=||x-y||$ (Euclidean distance) if $x,y$ are points. Beware that the distance of a point $y$ to a closed set $A$ is $d(y,A)=\inf_{x\in A}||x-y||$, while its Hausdorff distance to $A$ is $d_H(y,A) = \sup_{x\in A}||x-y||$. This explains in part why this metric is very sensitive to outliers (see \cite{robust}) and to noisy phenomena.

We wish to give upper bounds for $\BBp\left(d_H(\bbx_n, \BBm)> \epsilon\right)$ via a blocking technique. Let $k$ and $r$ be two positive integers such that $kr\leq n$. Define, for $1\leq i\leq k$, the random vector $Y_{i,r}$ of $\BBr^{dr}$, by
$
Y_{i,r}=(X_{(i-1)r+1},\cdots,X_{ir})^t.
$
Let
$$
\bby_{k} =\{Y_{1,r},\cdots,Y_{k,r}\}
$$

{{be a subset of $\BBr^{dr}$ of stationary $k$ random vectors which are not necessarily independent. }}
The support $\BBm_{dr}$ of the vector  $Y_{1,r}$  is  included in $\BBm\times \cdots \times\BBm$ ($r$ times) and since, by definition, $\BBm_{dr}$ is a closed set, it is necessarily compact in $\BBr^{dr}$. As we now show, it is possible to reduce the behavior of
$d_H(\bbx_n , {\BBm})$ to that of the sequence of vectors
$(Y_{i,r})_{1\leq i\leq k}$ for any $k$ and $r$ for which 
$kr\leq n$ and under only the assumption of stationarity of
$(X_i)_{i\in \BBt}$. 

\begin{pro}\label{pro1} With $\epsilon>0$, $k$ and $r$ any positive integers such that $kr\leq n$, it holds that
\begin{eqnarray*}
&&\BBp\left(d_H(\bbx_n , {\BBm}) > \epsilon\right) 
\leq \BBp\left(d_H(\bby_{k} , {\BBm}_{dr}) > \epsilon\right) 
\leq  \frac{\sup_{x\in {\BBm}_{dr}}\BBp\left(\min_{1\leq i\leq k}\|Y_{i,r}-x\|> \epsilon/2\right)}{1-\sup_{x\in {\BBm}_{dr}}\BBp\left(\|Y_{1,r}-x\|> \epsilon/4\right)}.
\end{eqnarray*}
\end{pro}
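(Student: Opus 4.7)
The plan is to prove the two inequalities in sequence.

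First, I would establish $\BBp(d_H(\bbx_n, \BBm) > \epsilon) \leq \BBp(d_H(\bby_{k}, \BBm_{dr}) > \epsilon)$ by verifying the almost-sure inclusion $\{d_H(\bby_{k}, \BBm_{dr}) \leq \epsilon\} \subset \{d_H(\bbx_n, \BBm) \leq \epsilon\}$. Since the support of $X_1$ is $\BBm$ and $Y_{1,r}$ has support $\BBm_{dr}$, the inclusions $\bbx_n \subset \BBm$ and $\bby_{k} \subset \BBm_{dr}$ hold a.s., so only the nontrivial half of each Hausdorff distance must be checked. The key observation is that the projection $\pi_1 : \bbr^{dr} \to \bbr^d$ onto the first block of $d$ coordinates sends $\BBm_{dr}$ onto $\BBm$: the image is compact, contained in $\BBm$, and carries the full mass of the first marginal $X_1$, hence equals its support. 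Thus, given $z \in \BBm$, one picks $y = (z, y_2, \ldots, y_r) \in \BBm_{dr}$; the assumed bound produces some $i_0 \leq k$ with $\|Y_{i_0,r} - y\| \leq \epsilon$, and projecting to the first block yields $\|X_{(i_0-1)r+1} - z\| \leq \epsilon$ with $(i_0-1)r+1 \leq kr \leq n$, as desired.

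For the second inequality, $\bby_{k} \subset \BBm_{dr}$ a.s.\ reduces matters to controlling $\sup_{y \in \BBm_{dr}} \min_{1\leq i \leq k} \|Y_{i,r} - y\|$. The plan is to discretize the supremum via a maximal $\epsilon/2$-packing $\{z_1, \ldots, z_N\}$ of the compact set $\BBm_{dr}$: by maximality the closed balls $B(z_j, \epsilon/2)$ cover $\BBm_{dr}$, while the packing property forces the smaller closed balls $B(z_j, \epsilon/4)$ to be pairwise disjoint. On the event $\{d_H(\bby_{k}, \BBm_{dr}) > \epsilon\}$, continuity and compactness give a $y^* \in \BBm_{dr}$ with $\min_i \|Y_{i,r} - y^*\| > \epsilon$; selecting $j$ with $\|y^* - z_j\| \leq \epsilon/2$ and applying the triangle inequality yields $\min_i \|Y_{i,r} - z_j\| > \epsilon/2$. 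A union bound then produces
$$
\BBp\left(d_H(\bby_{k}, \BBm_{dr}) > \epsilon\right) \leq N \cdot \sup_{x\in \BBm_{dr}} \BBp\bigl(\min_{1\leq i\leq k} \|Y_{i,r} - x\| > \epsilon/2\bigr).
$$

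Finally, the plan is to bound $N$ by evaluating the law of $Y_{1,r}$ on the disjoint balls $B(z_j, \epsilon/4)$:
$$
1 \geq \sum_{j=1}^N \BBp\bigl(\|Y_{1,r} - z_j\| \leq \epsilon/4\bigr) \geq N \bigl(1 - \sup_{x \in \BBm_{dr}} \BBp(\|Y_{1,r} - x\| > \epsilon/4)\bigr),
$$
giving $N \leq 1/\bigl(1 - \sup_{x} \BBp(\|Y_{1,r} - x\| > \epsilon/4)\bigr)$, which combined with the preceding display yields the stated bound. The main obstacle is the simultaneous calibration of the two radii (the geometric packing/covering lemma from \cite{smale} alluded to at the start of the section): the larger $\epsilon/2$ is what makes the triangle inequality downgrade $\min_i \|Y_{i,r} - y^*\| > \epsilon$ to $\min_i \|Y_{i,r} - z_j\| > \epsilon/2$, while the smaller $\epsilon/4$ is precisely the radius for which the packing balls stay disjoint, allowing $N$ to be controlled by a single-block concentration probability.
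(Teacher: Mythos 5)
Your proof is correct and follows essentially the same route as the paper's: the first inequality via the coordinate projection $\pi_1(\BBm_{dr})=\BBm$, and the second via discretizing the supremum over the compact support with a covering, a union bound, and a control of the covering number $N$ by the single-block concentration. The only cosmetic difference is that you re-derive the packing/covering bound on $N$ (disjoint $\epsilon/4$-balls summing to total mass at most $1$) where the paper simply invokes Lemma 5.2 of \cite{smale}.
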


\begin{proof}
Since $\BBp\left(\bby_{k}  \subset {\BBm}_{dr}\right)=1$, we have almost surely (a.s)
\begin{eqnarray}\label{n1}
&& d_H(\bby_{k} , {\BBm}_{dr}) = \sup_{x\in {\BBm}_{dr}}\min_{1\leq j\leq k}\|Y_{j,r}-x\|.
\end{eqnarray}
Since ${\BBm}_{dr}$ is compact, there exists a finite set ${\mathcal C}_N=\{c_{1},\cdots,c_{N}\}\subset {\BBm}_{dr}\subset {\BBr}^{dr}$ of centers of balls, forming a minimal $\epsilon$-covering set for ${\BBm}_{dr}$ so that, for a fixed $x\in {\BBm}_{dr}$, there exists $c_i \in {\mathcal C}_N\subset {\BBm}_{dr}$ such that
$$
\|x-c_i\|\leq  \epsilon.
$$
Hence,
$$
\|Y_{j,r}-x\| \leq \|Y_{j,r}-c_i\|+ \|c_i-x\|\leq \|Y_{j,r}-c_i\|+ \epsilon.
$$
Consequently, for any $x\in  {\BBm}_{dr}$,
$$
\min_{1\leq j\leq k}\|Y_{j,r}-x\| \leq \min_{1\leq j\leq k}\|Y_{j,r}-c_i\| +  \epsilon \leq \max_{1\leq i\leq N}\min_{1\leq j\leq k}\|Y_{j,r}-c_i\| +  \epsilon
$$
and
$$
\sup_{x\in {\BBm}_{dr}}\min_{1\leq j\leq k}\|Y_{j,r}-x\|\leq \max_{1\leq i\leq N}\min_{1\leq j\leq k}\|Y_{j,r}-c_i\| +  \epsilon.
$$
Hence,
\begin{eqnarray}\label{n2}
&& \BBp\left(\sup_{x\in {\BBm}_{dr}}\min_{1\leq j\leq k}\|Y_{j,r}-x\| \geq 2\epsilon\right) \leq \BBp\left( \max_{1\leq i\leq N}\min_{1\leq j\leq k}\|Y_{j,r}-c_i\| \geq \epsilon\right){\nonumber}\\
&& \leq N \max_{1\leq i\leq N}\BBp\left(\min_{1\leq j\leq k}\|Y_{j,r}-c_i\| \geq \epsilon\right)\leq N \sup_{x\in {\BBm}_{dr}}\BBp\left(\min_{1\leq j\leq k}\|Y_{j,r}-x\| \geq \epsilon\right).
\end{eqnarray}
We have now to bound $N$. For this we use Lemma 5.2 in \cite{smale}
(as was done in \cite{Fasy}), to get
\begin{equation}\label{smale1}
N\leq \left({\inf_{x\in \BBm_{rd}}\BBp(\|Y_{1,r} - x\| \leq  \epsilon/2)}\right)^{-1}= \left(1-{\sup_{x\in \BBm_{rd}}\BBp(\|Y_{1,r}-x\|> \epsilon/2)}\right)^{-1}.
\end{equation}
Hence, by (\ref{n1}) together with (\ref{n2}) and (\ref{smale1}),
\begin{eqnarray}\label{b3}
&& \BBp\left(d_H(\bby_{k} , {\BBm}_{dr}) > 2\epsilon\right)\\
&& \leq \left(1-{\sup_{x\in \BBm_{rd}}\BBp(\|Y_{1,r}-x\|> \epsilon/2)}\right)^{-1}\sup_{x\in \BBm_{rd}}\BBp\left(\min_{1\leq j\leq k}\|Y_{j,r}-x\| \geq \epsilon\right). {\nonumber}
\end{eqnarray}
Thanks to (\ref{b3}), the proof of this proposition is complete if we prove that,
\begin{equation}\label{pro13bis}
\BBp\left(d_H(\bbx_n , {\BBm}) > \epsilon\right)\leq \BBp\left(d_H(\bby_{k} , {\BBm}_{dr}) > \epsilon\right).
\end{equation}
Recall that $\BBp(\bbx_n \subset \BBm)=1$, so that 
$ d_H(\bbx_n , {\BBm}) = \sup_{x\in \BBm}\min_{1\leq j\leq n}\|X_j-x\|,
$
and, since $kr\leq n$,
$$
d_H(\bbx_n , {\BBm})=\sup_{x\in \BBm}\min_{1\leq j\leq n}\|X_j-x\|\leq \sup_{x\in \BBm}\min_{1\leq j\leq kr}\|X_j-x\|=d_H(\bbx_{kr} , {\BBm}).
$$
From this we deduce that
\begin{equation}\label{pro13--bis}
\BBp\left(d_H(\bbx_n , {\BBm}) > \epsilon\right)\leq \BBp\left(d_H(\bbx_{kr} , {\BBm}) > \epsilon\right).
\end{equation}
It finally remains to prove that
\begin{equation}\label{pro13}
\BBp\left(d_H(\bbx_{kr} , {\BBm}) > \epsilon\right)\leq \BBp\left(d_H(\bby_{k} , {\BBm}_{dr}) > \epsilon\right).
\end{equation}
For this, let $X_j\in \bbx_{kr} $ and $x\in \BBm$. Then there exist $l$ and $i$ such that $X_j$ is the $l$-th component of the vector $Y_{i,r}$.
We claim also that there exists ${\tilde x}\in \BBm_{dr}$ such that $x$ is the $l$-th component of the vector ${\tilde x}$.
{{In fact, let $\pi_l: \BBr^{dr}\rightarrow \BBr^d$ be the projection onto the $l$-th factor. It follows from an elementary property of the support, by the continuity of $\pi_l$ and the closure of ${\BBm_{dr}}$\footnote{{We are grateful to one of the referees of this paper for simplifying a previous argument.}} that
$$
\BBm={\mbox{supp}} (X_j) = \overline{\pi_l({\mbox{supp}}(Y_{i,r}))}=
\overline{\pi_l(\BBm_{dr}}) = \pi_l(\BBm_{dr}),
$$
where $\overline{A}$ denotes, as before, the closure of the set $A$. So, in particular, any $x\in \BBm$ is $x=\pi_l({\tilde x})$ for some ${\tilde x} \in \BBm_{dr}.$
}}
From this, we deduce that, for any $X_j\in \bbx_{kr} $ and $x\in \BBm$, there exist $1\leq i\leq k$ and ${\tilde x}\in \BBm_{dr}$ such that, a.s.,
$$
\|X_j-x\|\leq \|Y_{i,r}-{\tilde x}\|.
$$
Hence,
$$
\inf_{X_j\in \bbx_{kr} }\|X_j-x\|  \leq  \inf_{Y_{i,r} \in \bby_k}\|Y_{i,r}-{\tilde x}\|\leq d_H(\bby_{k} , {\BBm}_{dr}).
$$
Consequently, since $\BBp(\bbx_{kr} \subset \BBm)=1$,
$$
d_H(\bbx_{kr} , {\BBm})=\sup_{x\in \BBm}\inf_{X_j\in \bbx_{kr} }\|X_j-x\|  \leq d_H(\bby_{k} , {\BBm}_{dr}).
$$
From this we get (\ref{pro13}). Now (\ref{pro13}) together with 
(\ref{pro13--bis}) prove (\ref{pro13bis}). The proof of this proposition is complete.
\end{proof}


\section{Asymptotically $(\epsilon,\alpha)$-dense sequences of random variables}\label{examples}

As indicated in the introduction, our main goal is to find conditions under which a sequence $\bbx$ is asymptotically $(\epsilon,\alpha)$-dense in the common support  (see Definition \ref{densite}).
In this section, we give conditions and several examples of dependent random variables for which this is the case. This property is established every time by means of Proposition \ref{pro1} applied with suitable choices of sub-sequences $k$ and $r$ of $n$, and for all these examples, it holds that for any $\epsilon>0$,
\begin{equation*}
\lim_{n\rightarrow \infty}\BBp\left(d_H(\bby_{k} , {\BBm}_{dr}) > \epsilon\right)=
\lim_{n\rightarrow \infty}\BBp\left(d_H(\bbx_n , {\BBm}) > \epsilon\right)=0.
\end{equation*}
All proofs of the  propositions listed in this section appear in Section \ref{proofs}.

\subsubsection{Stationary $m$-dependent sequence on a compact set}
 
Recall that the sequence $(X_i)_{i\in \BBt}$   is $m$-dependent for some $m\geq 0$ if  the two
$\sigma$-fields $\sigma (X_i,\,\,i\leq k)$ and $\sigma (X_{i}, i\geq k+m+1)$ are independent for every $k$.
 In particular, $0$-dependent is the same as
independent.
\begin{example} ($m$-dependent sequence).\rm \\
Let $(T_i)_{i\in \BBn}$ be a sequence of i.i.d. random variables with values in $\BBr^d$. Let $h$ be a real-valued function  defined on ${\BBr^{dm}}$. The
stationary sequence $(X_n)_{n\in \BBn}$ defined by $X_n=h(T_n,T_{n+1},\cdots,T_{n+m})$
is a stationary sequence of $m$-dependent random variables. 
\end{example}

Define, for $m\in \BBn\setminus\{0\}$, $\epsilon>0$, and for 
$Y_{1,m}=(X_1,\cdots,X_m)^t$, as in the introduction, the concentration coefficient of the vector $Y_{1,m}$,
\begin{equation}\label{rhom}
\rho_{m}(\epsilon)=\inf_{x\in {\BBm}_{dm}}\BBp\left(\|Y_{1,m}-x\|\leq \epsilon\right).
\end{equation}

The following {proposition} gives conditions on $\rho_m(\epsilon)$ under which the asymptotically $(\epsilon,\alpha)$-dense property evoked in Definition (\ref{densite}) is satisfied.

\begin{pro}\label{coro2}
Let $(X_i)_{i\in \BBt}$ be a stationary sequence of $m$-dependent, $\BBr^d$-valued random vectors. Suppose that $X_1$
is with compact support $\BBm$. Let $\epsilon_0>0$ be fixed. Suppose that for any $0<\epsilon<\epsilon_0$, there exists a
strictly positive constant $\kappa_{\epsilon}$ such that,
$$
\rho_{m+1}(\epsilon) \geq \kappa_{\epsilon},
$$
then   it holds for any $0<\epsilon<\epsilon_0$ and any $n\geq m+1$,
\begin{eqnarray*}
&&\BBp\left(d_H(\bbx_n , {\BBm}) > \epsilon\right) 
\leq \frac{(1- \kappa_{\frac{\epsilon}{2}})^{[\frac{1}{2}[\frac{n}{m+1}]]}}{\kappa_{\frac{\epsilon}{4}}},
\end{eqnarray*}
where $[\cdot]$ denotes the integer part.  
Consequently, for any $\alpha\in ]0,1[$ and any $n\geq n_0$, where
$$n_0= \frac{2(m+1)}{\kappa_{\frac{\epsilon}{2}}} \left(\log\left(\frac{1}{\alpha }\right)+ \log\left(\frac{1}{ \kappa_{\frac{\epsilon}{4}} } \right)\right)+ 3(m+1),$$ 
$d_H(\bbx_n , {\BBm}) \leq \epsilon$ with probability at least $1-\alpha$.
\end{pro}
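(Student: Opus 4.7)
The plan is to apply Proposition \ref{pro1} with block size $r = m+1$ and number of blocks $k = \left[\frac{n}{m+1}\right]$, then exploit the $m$-dependence to handle the minimum over the (dependent) blocks by restricting attention to a subsequence of independent blocks. With that choice, Proposition \ref{pro1} yields
\begin{equation*}
\BBp\left(d_H(\bbx_n, \BBm) > \epsilon\right) \leq \frac{\sup_{x \in \BBm_{d(m+1)}} \BBp\left(\min_{1\leq i \leq k} \|Y_{i,m+1} - x\| > \epsilon/2\right)}{1 - \sup_{x \in \BBm_{d(m+1)}} \BBp\left(\|Y_{1,m+1} - x\| > \epsilon/4\right)}.
\end{equation*}
The denominator is immediate from the hypothesis, since $1 - \sup_x \BBp(\|Y_{1,m+1} - x\| > \epsilon/4) = \rho_{m+1}(\epsilon/4) \geq \kappa_{\epsilon/4}$.

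The key observation for the numerator is that the odd-indexed blocks $Y_{1,m+1}, Y_{3,m+1}, Y_{5,m+1},\ldots$ form a \emph{jointly} independent family: the last index used by $Y_{2j-1,m+1}$ is $(2j-1)(m+1)$ while the first index used by $Y_{2j+1,m+1}$ is $2j(m+1)+1$, exceeding the former by $m+2 \geq m+1$, so iterating the $\sigma$-algebra formulation of $m$-dependence yields joint independence by induction on $j$. Monotonicity of the minimum when restricted to a subfamily of blocks, together with stationarity and the hypothesis $\rho_{m+1}(\epsilon/2) \geq \kappa_{\epsilon/2}$, then gives
\begin{equation*}
\BBp\left(\min_{1\leq i \leq k}\|Y_{i,m+1}-x\| > \epsilon/2\right) \leq \prod_{\substack{1 \leq j \leq k \\ j \text{ odd}}}\BBp\left(\|Y_{j,m+1}-x\| > \epsilon/2\right) \leq (1-\kappa_{\epsilon/2})^{[k/2]},
\end{equation*}
uniformly in $x \in \BBm_{d(m+1)}$, where $[k/2] = \left[\frac{1}{2}\left[\frac{n}{m+1}\right]\right]$ is a (lower) count of independent factors. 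Substituting both estimates produces the announced inequality.

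For the explicit threshold $n_0$, I would invert the bound $(1-\kappa_{\epsilon/2})^{[\frac{1}{2}[\frac{n}{m+1}]]}/\kappa_{\epsilon/4} \leq \alpha$ using $\log(1-x) \leq -x$ to linearize it into $[\frac{1}{2}[\frac{n}{m+1}]]\kappa_{\epsilon/2} \geq \log(1/\alpha) + \log(1/\kappa_{\epsilon/4})$, and then absorb the two integer parts via the elementary estimate $[\frac{1}{2}[\frac{n}{m+1}]] \geq \frac{n}{2(m+1)} - \frac{3}{2}$, which delivers the stated formula for $n_0$ after a direct rearrangement. The main subtle point is the verification of \emph{joint} (not merely pairwise) independence of the spaced blocks: this is precisely where $m$-dependence is strong enough to push an i.i.d.-type union/product argument through, and once it is established the remaining steps are routine manipulations.
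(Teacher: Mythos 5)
Your proposal is correct and follows essentially the same route as the paper's own proof: apply Proposition \ref{pro1} with $r=m+1$ and $k=[n/(m+1)]$, pass to a subfamily of blocks spaced so that $m$-dependence makes them jointly independent (the paper uses the even-indexed blocks, you the odd-indexed ones, which is immaterial), bound the product by $(1-\kappa_{\epsilon/2})^{[k/2]}$ and the denominator by $\kappa_{\epsilon/4}$, and then invert via $(1-x)^N\leq e^{-Nx}$ and $[\frac{1}{2}[\frac{n}{m+1}]]\geq \frac{n}{2(m+1)}-\frac{3}{2}$ to obtain $n_0$. No gaps.
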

The requirements of Proposition \ref{coro2} prove  that the sequence $(X_n)_{n\in \BBt}$ is asymptotically $(\epsilon,\alpha)$-dense in $\BBm$ with threshold
$n_0$ as above.


\subsubsection{Stationary $m$-Approximable random 
variables on a compact set}\label{sectApp} 

{{
In this section we discuss, in the spirit of \cite{hk},  some examples of stationary compactly supported random variables $(X_n)_{n\in \bbz}$  
that can be approximated by $m$-dependent stationary sequences.  More precisely, the article \cite{hk}  introduced the notion of $L^p$-$m$-approximable sequence. This notion is related to $m$-dependence (see  Definition 2.1 of \cite{hk}) and is different from 
mixing (see Paragraph \ref{secbeta} below for a definition of mixing).  The idea is to construct, for $m\in \BBn$, a stationary sequence $(X_n^{(m)})_{n\in \bbz}$, $m$-dependent, compactly supported, for which the Hausdorff distance between the two sets ${\bbx}_n$ and ${{\bbx}_n^{(m)}}:=\{X_1^{(m)},\cdots, X_n^{(m)}\}$ is suitably controlled. 
In our case, it is not necessary that $X_1$ and $X_1^{(m)}$ have the same distribution, but what we need is that both  $X_1$ and $X_1^{m}$ have the same compact support. This will give us more choices for the construction of the sequence $(X_n^{(m)})_{n\in \bbz}$ which can be obtained by the method of coupling or by a  truncation argument (see \cite{hk} for more details). For our purpose, we shall use a truncation.  

More precisely, we will consider the sequence:
\begin{equation}\label{approI}
X_n=f(\epsilon_n,\epsilon_{n-1},\cdots)
\end{equation}
where $(\epsilon_i)_{i\in \bbz}$ is an i.i.d sequence with values in some measurable space $S$ and $f$ is a real bounded  function defined on $S^{\infty}$. The sequence $(X_n^{(m)})_{n\in \bbz}$ constructed, from  $(X_n)_{n\in \bbz}$, by truncation is:
\begin{equation}\label{approII}
X_n^{(m)}=f(\epsilon_n,\cdots,\epsilon_{n-m},0,\cdots).
\end{equation}
Clearly $(X_n^{(m)})_{n\in \bbz}$ is a stationary, $m$-dependent sequence and with the same compact support as  $(X_n)_{n\in \bbz}$ as soon as $f$ is bounded. We will assume thus that $f$ is bounded; that is $\|f\|_{\infty}=\sup_{x\in S^{\infty}}|f(x)|<\infty.$ As before, $\BBm$ will be the common support of those two sequences.

Now we need an additional assumption on $f$ in order to ensure a good control of the Hausdorff distance between the two sets ${\bbx}_n$ and ${{\bbx}_n^{(m)}}$. 
We suppose that $f$ is a real-valued bounded function and it satisfies the following ``Lipschitz type'' assumption (stated in \cite{hk}): there exists a decreasing sequence $(c_m)_{m\in \BBn}$ tending to $0$ as $m$ tends to infinity, such that
\begin{equation}\label{approIII}
|f(a_{m+1},\cdots,a_1,x_0,\cdots)-f(a_{m+1},\cdots,a_1,y_0,\cdots)| \leq c_m |f(x_0,x_{-1},\cdots)-f(y_0,y_{-1},\cdots)|,
\end{equation}
for any numbers $a_l,x_i,y_i\in S$, $l\in\{1,m+1\}$ and $ i\leq 0$. This assumption is satisfied, for instance, by some autoregressive models of order 

The next lemma proves that the truncated sequence $(X_n^{(m)})_{n\in \bbz}$ is a Hausdorff approximation of the original sequence $(X_n)_{n\in \bbz}$. 

\begin{lem}\label{lemmapro} 
Let $\epsilon>0$ be fixed, $(\epsilon_i)_{i\in \bbz}$ be an i.i.d. sequence, $f$ a bounded function satisfying (\ref{approIII}), and let $(X_n)_{n\in \bbz}$ and $(X_n^{(m)})_{n\in \bbz}$ 
 be the associated sequences as in (\ref{approI}) and (\ref{approII}) respectively.
Let $m \in \BBn$ be such that
$$
2c_m \|f\|_{\infty} <  \epsilon,
$$
where $\|f\|_{\infty}$ is the supremum of $f$.
Then 
$
d_H({\bbx}^{(m)}_n, {\bbx}_n)<\epsilon,\,\,\, a.s.
$
\end{lem}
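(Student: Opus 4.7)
The plan is to first establish a pointwise (indeed, sure) bound $|X_n - X_n^{(m)}| < \epsilon$ using the Lipschitz-type assumption (\ref{approIII}), and then to transfer this uniform coordinate-wise control to an upper bound on the Hausdorff distance between the two indexed point clouds $\bbx_n$ and $\bbx_n^{(m)}$.

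For the first step, I would apply condition (\ref{approIII}) with the substitution $a_i = \epsilon_{n+1-i}$ for $i = 1, \ldots, m+1$, so that the common prefix of $m+1$ arguments becomes $\epsilon_n, \ldots, \epsilon_{n-m}$. For the tails I would take $(x_0, x_{-1}, \ldots) = (\epsilon_{n-m-1}, \epsilon_{n-m-2}, \ldots)$ on one side and $(y_0, y_{-1}, \ldots) = (0, 0, \ldots)$ on the other. With this choice, by definitions (\ref{approI}) and (\ref{approII}), the left-hand side of (\ref{approIII}) is exactly $|X_n - X_n^{(m)}|$, while the right-hand side is bounded by $2c_m \|f\|_\infty$ using the trivial estimate $|f| \leq \|f\|_\infty$ together with the triangle inequality. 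The hypothesis $2c_m\|f\|_\infty < \epsilon$ then gives $|X_n - X_n^{(m)}| < \epsilon$ surely, for every $n$.

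For the Hausdorff step, I would use the definition (\ref{hd}) applied to the finite point clouds $\bbx_n^{(m)}$ and $\bbx_n$:
$$ d_H(\bbx_n^{(m)}, \bbx_n) = \max\left(\max_{1\leq i\leq n}\min_{1\leq j\leq n}|X_i^{(m)} - X_j|,\ \max_{1\leq j\leq n}\min_{1\leq i\leq n}|X_i^{(m)} - X_j|\right). $$
For each index $i$, selecting $j = i$ in the inner minimum yields $\min_j |X_i^{(m)} - X_j| \leq |X_i^{(m)} - X_i| < \epsilon$, and symmetrically for the other term. Taking a maximum over a finite collection of quantities each strictly smaller than $\epsilon$ preserves the strict inequality, so $d_H(\bbx_n^{(m)}, \bbx_n) < \epsilon$ surely, hence a.s.

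There is essentially no obstacle beyond carefully unpacking the Lipschitz-type assumption (\ref{approIII}) with the correct substitution so that its left-hand side reproduces $|X_n - X_n^{(m)}|$ while its right-hand side is uniformly bounded by $2\|f\|_\infty$; the Hausdorff step is then completely elementary because the two clouds share a canonical indexing, so the inner minima are controlled by the diagonal terms.
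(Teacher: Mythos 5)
Your proof is correct and follows essentially the same route as the paper's: pair the two clouds along the diagonal $j=i$ and use the Lipschitz-type condition \eqref{approIII} with the bound $|f|\leq\|f\|_\infty$ to get $|X_i-X_i^{(m)}|\leq 2c_m\|f\|_\infty<\epsilon$ surely, which forces $d_H(\bbx_n^{(m)},\bbx_n)<\epsilon$ a.s. (the paper phrases the last step through a union bound on probabilities that each vanish, but the content is identical; the only nit is that your substitution $a_i=\epsilon_{n+1-i}$ lists the prefix in reverse order --- it should be $a_i=\epsilon_{n-m-1+i}$ so that $(a_{m+1},\ldots,a_1)=(\epsilon_n,\ldots,\epsilon_{n-m})$).
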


In view of Lemma \ref{lemmapro}, the condition $\lim_{m\rightarrow \infty}c_m=0$  is enough  to approximate,  in the Hausdorff sense, the sequence  $(X_n)_{n\in \bbz}$ by the truncated sequence $(X_n^{(m)})_{n\in \bbz}$. 
\begin{proof} We recall that,
$$
d_H({\bbx}^{(m)}_n, {\bbx}_n)=\max\left(\max_{1\leq i\leq n}\min_{1\leq j\leq n}|X_i-X_j^{(m)}|, \max_{1\leq i\leq n}\min_{1\leq j\leq n}|X_j-X_i^{(m)}| \right).
$$
Hence,
\begin{eqnarray*}
 && \BBp(d_H({\bbx}^{(m)}_n, {\bbx}_n)\geq \epsilon) \\
 && \leq \BBp(\max_{1\leq i\leq n}\min_{1\leq j\leq n}|X_i-X_j^{(m)}|\geq \epsilon) + \BBp( \max_{1\leq i\leq n}\min_{1\leq j\leq n}|X_j-X_i^{(m)}|\geq \epsilon) \\
 &&  \leq 2 n \max_{1\leq i \leq n}\BBp( |X_i-X_i^{(m)}|\geq \epsilon).
\end{eqnarray*}
Now,
$$
|X_i-X_i^{(m)}| \leq c_m|f(\epsilon_{n-m-1},\epsilon_{n-m-2},\cdots)-f(0,0,\cdots)| \leq 2c_m \|f\|_{\infty}.
$$
Consequently the event 
$( |X_i-X_i^{(m)}|\geq \epsilon)$ implies that 
$\epsilon \leq 2c_m \|f\|_{\infty}$ and then the probability
$
\BBp( |X_i-X_i^{(m)}|\geq \epsilon) 
$ vanishes
whenever $m$ satisfies $2c_m \|f\|_{\infty}<\epsilon$. We conclude that, for such $m$, $\BBp(d_H({\bbx}^{(m)}_n, {\bbx}_n)\geq \epsilon)=0$. \end{proof}

We can now apply Proposition \ref{coro2} to the $m$-dependent sequence $(X_n^{(m)})_{n\in \bbz}$, combined with Lemma \ref{lemmapro}, to establish asymptotic $(\epsilon,\alpha)$-density for $(X_n)_{n\in\bbz}$. Doing this, we obtain the following result under a suitable control of the following concentration coefficient, related to the truncated sequence $(X_n^{(m)})_{n\in \bbz}$ (as defined in (\ref{approII})),
\begin{equation}\label{rhoconcen}
 \rho_{m+1}^{(m)}(\epsilon)= \inf_{x \in \BBr^{m+1}}\BBp(\|Y_{1,m+1}^{(m)}-x\|\leq \epsilon),
\end{equation}
with $Y_{1,m+1}^{(m)}=(X_1^{(m)},\cdots,X_{m+1}^{(m)})^t$.
\begin{pro}\label{proapp}
     Let $(X_n)_{n\in \bbz}$ and $f$ be as in the statement of Lemma \ref{lemmapro}. Let $\epsilon_0>0, \epsilon \in ]0,\epsilon_0[$ be fixed and $m \in \BBn$ be such that $2c_m \|f\|_{\infty} <  \epsilon$. Suppose that the concentration coefficient $\rho_{m+1}^{(m)}(\epsilon)$ related to the truncated sequence $(X_n^{(m)})_{n\in \bbz}$, and defined in  (\ref{rhoconcen}), satisfies
     $$
     \rho_{m+1}^{(m)}(\epsilon)\geq {\kappa_{\epsilon}},
     $$
     for some ${\kappa_{\epsilon}}>0$. Then for any $n\geq m+1$,
     $$
\BBp(d_H(\bbx_n, \BBm)\geq 2\epsilon)\leq  \frac{(1- \kappa_{\frac{\epsilon}{2}})^{[\frac{1}{2}[\frac{n}{m+1}]]}}{\kappa_{\frac{\epsilon}{4}}}
     $$
and a similar conclusion to Proposition \ref{coro2} is true for such $m$.     
\end{pro}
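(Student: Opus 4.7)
The plan is to deduce this proposition directly from Proposition \ref{coro2} applied to the truncated sequence $(X_n^{(m)})_{n\in\bbz}$, and then to transfer the resulting bound back to $(X_n)$ via Lemma \ref{lemmapro} and a triangle-inequality argument for the Hausdorff distance. By construction, the truncated sequence is stationary, $m$-dependent, and supported on $\BBm$, and its concentration coefficient $\rho_{m+1}^{(m)}(\cdot)$ is, by hypothesis, bounded below by $\kappa_\cdot$ at the relevant scales ($\epsilon/2$ and $\epsilon/4$). Consequently Proposition \ref{coro2} applies verbatim to $(X_n^{(m)})$ and yields, for every $n\geq m+1$,
$$
\BBp\bigl(d_H(\bbx_n^{(m)}, \BBm) > \epsilon\bigr) \leq \frac{(1-\kappa_{\epsilon/2})^{[\frac{1}{2}[\frac{n}{m+1}]]}}{\kappa_{\epsilon/4}}.
$$

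To transfer this to $\bbx_n$, the natural tool is the triangle inequality
$$
d_H(\bbx_n, \BBm) \leq d_H(\bbx_n, \bbx_n^{(m)}) + d_H(\bbx_n^{(m)}, \BBm),
$$
from which the event $\{d_H(\bbx_n, \BBm) \geq 2\epsilon\}$ is contained in $\{d_H(\bbx_n, \bbx_n^{(m)}) \geq \epsilon\} \cup \{d_H(\bbx_n^{(m)}, \BBm) > \epsilon\}$. Under the standing hypothesis $2c_m\|f\|_\infty < \epsilon$, Lemma \ref{lemmapro} ensures that $d_H(\bbx_n, \bbx_n^{(m)}) < \epsilon$ almost surely, so the first event is negligible. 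A union bound therefore gives
$$
\BBp\bigl(d_H(\bbx_n,\BBm) \geq 2\epsilon\bigr) \leq \BBp\bigl(d_H(\bbx_n^{(m)},\BBm) > \epsilon\bigr),
$$
and combining with the previous display produces the stated quantitative bound.

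For the ``similar conclusion'' on the threshold, I would invert the inequality $(1-\kappa_{\epsilon/2})^{[\frac{1}{2}[n/(m+1)]]}/\kappa_{\epsilon/4} \leq \alpha$ in $n$ by the same elementary manipulation (using $\log(1-x) \leq -x$) employed at the end of Proposition \ref{coro2}, obtaining an explicit $n_0$ of the same form but now guaranteeing $d_H(\bbx_n,\BBm)\leq 2\epsilon$ with probability at least $1-\alpha$. The substantial content is already encapsulated in Proposition \ref{coro2} (for the $m$-dependent deviation bound) and in Lemma \ref{lemmapro} (for the almost-sure cost of truncating the infinite-memory representation); what remains is essentially bookkeeping. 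The only point that requires a bit of care is to make sure that the constants line up correctly -- that the $2\epsilon$ on the left-hand side is exactly what the triangle-inequality argument delivers from an $\epsilon$-deviation of $\bbx_n^{(m)}$ together with the a.s. $\epsilon$-bound on $d_H(\bbx_n,\bbx_n^{(m)})$, and that the concentration hypothesis $\rho_{m+1}^{(m)} \geq \kappa$ is read at the scales $\epsilon/2$ and $\epsilon/4$ that Proposition \ref{coro2} feeds on.
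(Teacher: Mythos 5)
Your proposal is correct and follows exactly the paper's own argument: apply Proposition \ref{coro2} to the $m$-dependent truncated sequence, then transfer the bound to $\bbx_n$ via the triangle inequality for $d_H$, a union bound, and the almost-sure bound $d_H(\bbx_n,\bbx_n^{(m)})<\epsilon$ from Lemma \ref{lemmapro}. Nothing essential differs from the paper's proof.
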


\begin{proof}
This follows from Lemma \ref{lemmapro}
together with Proposition \ref{coro2} and the fact that,
\begin{eqnarray*}
&& \BBp(d_H(\bbx_n, \BBm)\geq 2\epsilon)\leq \BBp(d_H({\bbx}_n, \bbx^{(m)}_n)+ d_H({\bbx}^{(m)}_n, \BBm) \geq 2\epsilon) \\
&&\leq \BBp(d_H({\bbx}_n, {\bbx}^{(m)}_n)\geq\epsilon)+ \BBp(d_H({\bbx}^{(m)}_n, \BBm) \geq\epsilon).
\end{eqnarray*}
\end{proof}
}}

\subsubsection{Stationary $\beta$-mixing sequence on a compact set}\label{secbeta}
Recall that the stationary sequence $(X_n)_{n\in \BBn}$ is $\beta$-mixing if $\beta_n$ tends to $0$ when $n$ tends to infinity
where the coefficients $(\beta_n)_{n>0}$ are defined by, (see \cite{brc} and \cite{Yu} for the following expression of $\beta_n$),
\begin{equation}\label{betabelow}
\beta_n=\sup_{l\geq 1}\BBe  \left\{\sup\left| \BBp\left(B|\sigma(X_1,\cdots,X_l)\right) -\BBp(B)\right|,\,\, B\in \sigma(X_i,\,\, i\geq l+n)\right\}.
\end{equation}
The following corollary gives conditions on the behavior of the two sequences $(\rho_n(\epsilon))_{n>0}$ and $(\beta_n)_{n>0}$ under which   the asymptotically $(\epsilon,\alpha)$-dense property of Definition (\ref{densite}) is satisfied.

\begin{pro}\label{beta}
Let $(X_n)_{n\geq 0}$ be a  stationary $\beta$-mixing sequence. Suppose that $X_1$ is supported on a compact set $\BBm$. Then it holds, for any $\epsilon>0$ and any sequences
$k_n$ and $r_n$ such that $k_nr_n\leq n$, 
$$
\BBp\left(d_H(\bbx_n , {\BBm}) > \epsilon\right) \leq 
\frac{k_n^2\beta_{r_n}+ k_n\exp\left(-[\frac{k_n}{2}]\rho_{r_n}(\epsilon/2)\right)}{k_n\rho_{r_n}(\epsilon/4)}.
$$
Suppose moreover that for some $\beta>1$, and any $\epsilon>0$ small enough,
$$\lim_{m\rightarrow \infty} \rho_m(\epsilon)\frac{e^{m^{\beta}}}{m^{1+\beta}}=\infty,\,\,\,
{\mbox{and}}\,\,\,
\lim_{m\rightarrow\infty} \frac{e^{2m^{\beta}}}{m^2}\beta_m=0.
$$
Then $(X_n)_{n\geq 0}$ is asymptotically $(\epsilon,\alpha)$-dense in $\BBm$.
\end{pro}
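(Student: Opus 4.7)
My plan is to combine Proposition \ref{pro1} with a Berbee-type coupling and to calibrate the sequences $(k_n,r_n)$ using the hypotheses. Applying Proposition \ref{pro1} directly gives
$$
\BBp(d_H(\bbx_n,\BBm)>\epsilon)\ \leq\ \frac{\sup_{x\in\BBm_{dr_n}}\BBp(\min_{1\leq i\leq k_n}\|Y_{i,r_n}-x\|>\epsilon/2)}{\rho_{r_n}(\epsilon/4)},
$$
so the denominator already matches the target (modulo a cosmetic factor $k_n$ that cancels with the numerator), and only the numerator has to be controlled.

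For the numerator one should not apply Berbee coupling to the full collection $(Y_{i,r_n})_{1\leq i\leq k_n}$, since adjacent blocks share no temporal gap and this would only yield the useless coefficient $\beta_1$. Instead I would retain only the odd-indexed blocks $Y_{1,r_n},Y_{3,r_n},\dots$, which are measurable with respect to strips of time-indices separated by exactly $r_n$ steps. An iterated application of Berbee's coupling lemma produces jointly independent copies $\widetilde Y^{\ast}_{2j-1,r_n}$, each distributed as $Y_{1,r_n}$, satisfying
$$
\BBp(\exists\,j:\ Y_{2j-1,r_n}\neq\widetilde Y^{\ast}_{2j-1,r_n})\ \leq\ ([k_n/2]-1)\,\beta_{r_n}.
$$
Since $\{\min_i\|Y_{i,r_n}-x\|>\epsilon/2\}\subset\{\min_{\text{odd }i}\|Y_{i,r_n}-x\|>\epsilon/2\}$ and $\BBp(\|\widetilde Y^{\ast}_{\cdot}-x\|>\epsilon/2)\leq 1-\rho_{r_n}(\epsilon/2)$ by definition of $\rho_{r_n}$, the numerator is bounded above by $\exp(-[k_n/2]\rho_{r_n}(\epsilon/2))+k_n\beta_{r_n}$, and the announced non-asymptotic inequality follows.

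For the asymptotic conclusion I would set $r_n=\lfloor(\log n)^{1/\beta}\rfloor$ and $k_n=\lfloor n/r_n\rfloor$, so that $r_n^{\beta}\sim\log n$ and $e^{r_n^{\beta}}\sim n$. The first hypothesis provides, for any constant $C>0$ and $n$ large, $\rho_{r_n}(\epsilon/j)\geq C r_n^{1+\beta}/n$ for $j=2,4$; the second provides $\beta_{r_n}\leq c r_n^2/n^2$ for any $c>0$. One then computes
$$
\frac{k_n\beta_{r_n}}{\rho_{r_n}(\epsilon/4)}\ \leq\ \frac{c}{C\,r_n^{\beta}}\ \longrightarrow\ 0,
$$
while $[k_n/2]\rho_{r_n}(\epsilon/2)\geq (C/2)\log n$; choosing $C>2$ (permitted by the first hypothesis) gives
$$
\frac{\exp(-[k_n/2]\rho_{r_n}(\epsilon/2))}{\rho_{r_n}(\epsilon/4)}\ \leq\ \frac{n^{1-C/2}}{C\,r_n^{1+\beta}}\ \longrightarrow\ 0.
$$
Hence $\BBp(d_H(\bbx_n,\BBm)>\epsilon)\to 0$, which delivers the asymptotic $(\epsilon,\alpha)$-density of Definition \ref{densite}.

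The main obstacle is the coupling step: one must recognise that a naive Berbee-coupling of adjacent blocks is useless since the effective gap would only be $1$, and that the correct remedy is to thin the blocks and keep every other one so that the gap between the retained blocks matches the block length $r_n$, at the cost of a harmless factor $2$ in the effective sample size. Once this is in place, the two hypotheses are perfectly calibrated so that the natural scale $r_n^{\beta}\sim\log n$ simultaneously forces $k_n\beta_{r_n}/\rho_{r_n}\to 0$ and $[k_n/2]\rho_{r_n}\to\infty$.
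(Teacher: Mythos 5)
Your proposal is correct and follows essentially the same route as the paper: Proposition \ref{pro1}, thinning to every other block so that the retained blocks are separated by gaps of length $r_n$, replacing them by independent copies at a total-variation cost of order $k_n\beta_{r_n}$ (the paper packages your iterated Berbee coupling as Lemma 4.1 of Yu's article), and the identical calibration $r_n\approx(\log n)^{1/\beta}$, $k_n\approx n/r_n$ for the asymptotic conclusion. The only differences are cosmetic (odd versus even blocks, citing Berbee directly rather than Yu), and your verification of the final limits is at the same level of detail as the paper's.
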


{{ In the proof of Proposition \ref{beta} given in Section \ref{proofs}, we construct two sequences $(k_n)_n$ and $(r_n)_n$ for which 
$$
\lim_{n\rightarrow \infty} \frac{k_n^2\beta_{r_n}+ k_n\exp\left(-[\frac{k_n}{2}]\rho_{r_n}(\epsilon/2)\right)}{k_n\rho_{r_n}(\epsilon/4)} =0. 
$$
The threshold $n_0$ is not explicitly calculated but it is that integer for which 
$$
 \frac{k_n^2\beta_{r_n}+ k_n\exp\left(-[\frac{k_n}{2}]\rho_{r_n}(\epsilon/2)\right)}{k_n\rho_{r_n}(\epsilon/4)} \leq \alpha, 
$$ 
for all $n\geq n_0$. 
}}

\subsubsection{Stationary weakly dependent sequence on a compact set}
We suppose here that $(X_{{i}})_{i \in \BBt}$ is a stationary sequence such that $X_1$ takes values in a compact support $\BBm$.
We suppose also that this sequence is weakly dependent in the sense of \cite{DLLY}. {{ More precisely, we suppose that it satisfies the following definition.
\begin{defi}\label{psidef}
We say that the sequence $(X_n)_{n\in \BBt}$ is $(\BBl_{\infty}, \Psi)$-weakly dependent
if there exists a non-increasing function $\Psi$ such that
$\lim_{r\rightarrow\infty}\Psi(r)=0$, that for any measurable functions $f$ and $g$ bounded (respectively by $\|f\|_{\infty}$ and $\|g\|_{\infty}$)
and for any $ i_1\leq \cdots \leq i_k<i_{k}+r\leq i_{k+1}\leq \cdots\leq i_n$ one has
\begin{eqnarray}\label{psidep}
\left|\Cov\left(\frac{f(X_{i_1},\cdots,X_{i_k})}{\|f\|_{\infty}}, \frac{g(X_{i_{k+1}},\cdots, X_{i_n})}{\|g\|_{\infty}}\right) \right| \leq \Psi(r).
\end{eqnarray}
\end{defi}
\noindent See  Definition 2.2 in \cite{DDL} for a more general setting. }}

{{The dependence condition  in Definition \ref{psidef} is weaker than the Rosenblatt strong mixing dependence \cite{ros1}. Let us briefly explain this. The $\alpha$-mixing coefficient between the two sigma-fields ${\mathcal A}$ and ${\mathcal B}$ is  defined as
$$
\alpha(\mathcal A, \mathcal B)=\sup_{A\in {\mathcal A} ,B\in \mathcal B}|\BBp(A\cap B)-\BBp(A)\BBp(B)|.
$$
The sequence $(X_n)_{n\in \BBt}$
is strongly mixing if its coefficient $\alpha_n$ defined, for $n\geq 1$, by
$$
\alpha_n=\sup_{k \in \BBt} \alpha({\mathcal P}_k, {\mathcal F}_{k+n}) 
$$
tends to $0$ as $n$ tends to infinity, with ${\mathcal P}_k=\sigma(X_i,\, i\leq k)$  and 
${\mathcal F}_{k+n}=\sigma(X_i,\, i\geq  k +n)$. An equivalent formula for $\alpha_n$, using the covariance between some functions, is stated in Theorem 4.4 of \cite{BR2007} 
$$
\alpha_n=\frac{1}{4}\sup\left\{\frac{\Cov(f,g)}{\|f\|_{\infty}\|g\|_{\infty}},\,\, f\in L_{\infty}({\mathcal P}_k), g\in L_{\infty}({\mathcal F}_{k+n})\right\},
$$
where $L_{\infty}({\mathcal A})$ denotes the set of bounded functions ${\mathcal A}$-measurables for some $\sigma$-fields $\mathcal A$.  
It follows from this formula that strongly mixing sequences are $(\BBl_{\infty}, \Psi)$-weakly dependent, as stated in Definition  \ref{psidef} (with $\Psi(r)=\alpha_r$ for  $r>0$). The converse is however not necessarily true (see \cite{DDL}). }}

{{We can now state our result for stationary weakly dependent sequences.}}

\begin{pro}\label{psi}
Let $(X_n)_{n\in \BBt}$ be a sequence of stationary, $(\BBl_{\infty}, \Psi)$-weakly dependent in the sense of Definition (\ref{psidef}). Suppose that $X_1$ is supported on a compact set $\BBm$. 
 Then it holds, for any $\epsilon>0$ and any sequences
$k_n$ and $r_n$ such that $k_nr_n\leq n$, 
$$
\BBp\left(d_H(\bbx_n , {\BBm}) > \epsilon\right) \leq 
\frac{k_n^2\Psi({r_n})+ k_n\exp\left(-[\frac{k_n}{2}]\rho_{r_n}(\epsilon/2)\right)}{k_n\rho_{r_n}(\epsilon/4)}.
$$
Suppose moreover that, for some $\beta>1$, and any positive $\epsilon$ small enough,
$$\lim_{m\rightarrow \infty} \rho_m(\epsilon)\frac{e^{m^{\beta}}}{m^{1+\beta}}=\infty,\,\,\,
{\mbox{and that}}\,\,\,
\lim_{m\rightarrow\infty} \frac{e^{2m^{\beta}}}{m^2}\Psi(m)=0.
$$
Then this sequence $(X_n)_{n\in \BBt}$ is asymptotically $(\epsilon,\alpha)$-dense in $\BBm$.
\end{pro}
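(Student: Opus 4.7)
The statement parallels Proposition \ref{beta} for $\beta$-mixing sequences, with $\beta_{r_n}$ replaced by $\Psi(r_n)$, and the proof would follow the same blueprint. The starting point is Proposition \ref{pro1}, which, for any block length $r_n$ and block count $k_n$ with $k_n r_n \le n$, gives
\begin{equation*}
\BBp(d_H(\bbx_n, \BBm) > \epsilon) \le \frac{\sup_{x \in \BBm_{dr_n}} \BBp(\min_{1 \le i \le k_n} \|Y_{i,r_n} - x\| > \epsilon/2)}{\rho_{r_n}(\epsilon/4)},
\end{equation*}
so the entire task reduces to upper-bounding the numerator.

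To exploit that $(X_n)$ is $(\BBl_\infty, \Psi)$-weakly dependent, I would restrict the intersection defining the numerator to its odd-indexed sub-intersection: the event $\bigcap_{i=1}^{k_n} \{\|Y_{i,r_n} - x\| > \epsilon/2\}$ is contained in $\bigcap_{j=1}^{[k_n/2]} \{\|Y_{2j-1,r_n} - x\| > \epsilon/2\}$. The advantage is that any two consecutive odd-indexed blocks are separated in the original time index by a gap of $r_n$. Applying (\ref{psidep}) (or its more general form in \cite{DDL}) iteratively to the bounded functions $\ind_{\{\|Y_{2j-1,r_n} - x\| > \epsilon/2\}}$, one peels off the leftmost factor at each step at a cost of $\Psi(r_n)$, obtaining
\begin{equation*}
\BBp\Bigl(\bigcap_{j=1}^{[k_n/2]} \{\|Y_{2j-1,r_n} - x\| > \epsilon/2\}\Bigr) \le \prod_{j=1}^{[k_n/2]} \BBp(\|Y_{2j-1,r_n} - x\| > \epsilon/2) + ([k_n/2]-1)\Psi(r_n).
\end{equation*}
By stationarity each factor is at most $1 - \rho_{r_n}(\epsilon/2)$, so the product is bounded by $\exp(-[k_n/2]\rho_{r_n}(\epsilon/2))$. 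Using the crude estimate $[k_n/2] - 1 \le k_n$ and substituting into Proposition \ref{pro1} yields the claimed inequality.

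For the asymptotic statement, the task is to exhibit sequences $r_n, k_n$ with $k_n r_n \le n$ for which the right-hand side tends to $0$, and then extract $n_0$ as the first index where this bound drops below $\alpha$. A natural choice is $r_n$ defined by $r_n^\beta \sim c \log n$ with $c$ in a suitable sub-interval of $(0,1)$, and $k_n = \lfloor n/r_n \rfloor$. Under the first hypothesis, $k_n \rho_{r_n}(\epsilon/2)$ grows polynomially in $n$, so the exponential term decays faster than any polynomial and in particular faster than $\rho_{r_n}(\epsilon/4)$. Under the second hypothesis, $\Psi(m)/\rho_m(\epsilon)^2 \lesssim m^{-2\beta} \cdot o(1)$, and because $\beta > 1$ there is enough slack to absorb the extra $k_n$ factor and force $k_n \Psi(r_n)/\rho_{r_n}(\epsilon/4) \to 0$.

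The principal technical obstacle I anticipate is the careful iteration of the weak dependence bound: strictly speaking, (\ref{psidep}) bounds covariances of bounded \emph{measurable} functions, so either the extended framework of \cite{DDL} should be invoked directly for indicator functions, or one must approximate $\ind_{\{\|y - x\| > \epsilon/2\}}$ by bounded continuous (even Lipschitz) functions and pass to the limit using the closedness of the event. The secondary technical point is the balancing of the constant $c$ in the definition of $r_n$, since the two hypotheses constrain it from opposite sides; the margin $\beta > 1$ (rather than $\beta \ge 1$) is precisely what opens a non-empty window of admissible $c$.
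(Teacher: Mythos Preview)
Your proof is correct and follows essentially the same route as the paper: Proposition \ref{pro1}, restriction to alternate blocks (the paper uses even rather than odd indices), the telescoping identity $\BBp(\bigcap_i A_i) - \prod_i\BBp(A_i) = \sum_{i} \BBp(A_1)\cdots\BBp(A_{i-1})\Cov(\BBone_{A_i}, \BBone_{A_{i+1}\cap\cdots\cap A_s})$ to peel off factors at cost $\Psi(r_n)$ each, and then the choice $r_n = [(\ln n)^{1/\beta}]$, $k_n = [n/r_n]$ (your $c=1$, which does work) for the asymptotics. Your ``principal obstacle'' is a non-issue: Definition \ref{psidef} explicitly allows all bounded \emph{measurable} $f,g$, so indicator functions are admissible directly, with no approximation needed.
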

Here again the threshold $n_0$ is not explicitly calculated but it can be given by an inequality, as it is the case for $\beta$-mixing.

\subsection{Comparison with the i.i.d case} \label{rem}
We can compare the bounds of Proposition \ref{coro2} (respectively Propositions \ref{beta} and \ref{psi}) with what is already obtained in the i.i.d case (see \cite{Ch}, \cite{CR2004}, \cite{CR2009}). That is, we restrict to the case when  $m=0$ (respectively, $k_n=n$ and $r_n=1$, $\beta_n=\Psi(n)=0$ for $n\geq 1$). Suppose we are in this situation, and that moreover $\rho_1(\epsilon)$ has a strictly positive lower bound, say $\kappa_{\epsilon}$. Then all the conclusions of the three propositions above give the same upper bound for 
$\BBp\left(d_H(\bbx_n , {\BBm}) > \epsilon\right)$ which is,
\begin{equation}\label{upperiid}
\frac{\exp(-[\frac{n}{2}]\kappa_{\frac{\epsilon}{2}})}
{\kappa_{\frac{\epsilon}{4}}}.
\end{equation}
Now we suppose, as already done in the i.i.d case, that the 
 $(a,b)$-standard assumption\footnote{{The 
 $(a,b)$-standard assumption was used, in the i.i.d context,  for set estimation problems under Hausdorff distance (\cite{CR2004}, \cite{CR2009}) and also for a statistical analysis of persistence diagrams (\cite{Ch}, \cite{Fasy}). }} is satisfied, i.e,
$
\kappa_{\epsilon}= a \epsilon^{b},
$
for some $a>0, b>0$ and for positive $\epsilon$ small enough. 
Then clearly, an upper bound for (\ref{upperiid}) is 
$$
C \frac{\exp(-c n \epsilon^b)}{\epsilon^b},
$$
for some positive constants $C$ and $c$ (independent of $n$ and of $\epsilon$) as was already found in the i.i.d case ({{see for instance the upper bound (3.2) in \cite{CR2009}}}). 
Finally, we have to check that the requirements of Propositions \ref{coro2},  \ref{beta} and \ref{psi} are satisfied under the 
 $(a,b)$-standard assumption
(i.e. when $\rho_1(\epsilon)\geq a \epsilon^{b}$). Since we are in the case when $m=0$ in Proposition \ref{coro2} and when $\beta_n=0, \Psi_n=0, \, n\geq 1$ in Propositions 
\ref{beta}, \ref{psi}, we have only 
to check that  the 
 $(a,b)$-standard assumption ensures, for i.i.d. random variables,
 $\lim_{m\rightarrow \infty} \rho_m(\epsilon)\frac{e^{m^{\beta}}}{m^{1+\beta}}=\infty$. We deduce from the inequality
 $$
 \|(X_1,\cdots,X_m)^t -(x_1,\cdots,x_m)^t\|^2 \leq 
 m \max_{1\leq i\leq m}\|X_i-x_i\|^2,
 $$
 that
 $$
 \BBp\left(m \max_{1\leq i\leq m}\|X_i-x_i\|^2 \leq \epsilon^2\right)\leq  \BBp\left(\|(X_1,\cdots,X_m)^t -(x_1,\cdots,x_m)^t\|^2\leq \epsilon^2\right).
 $$
 Now,
 $$
 \BBp\left(m \max_{1\leq i\leq m}\|X_i-x_i\|^2 \leq \epsilon^2\right) = \left(\BBp\left(\|X_1-x_1\| \leq \epsilon/{\sqrt{m}}\right)\right)^m.
 $$
 Hence,
 $\rho_m(\epsilon)\geq \rho_1^m(\epsilon/{\sqrt{m}}).$ 
We get, combining this bound with the $(a,b)$-standard assumption, 
\begin{eqnarray*}
&& a^m \frac{\epsilon^{bm}}{m^{bm/2}}\frac{e^{m^{\beta}}}{m^{1+\beta}}
\leq \rho_m(\epsilon)\frac{e^{m^{\beta}}}{m^{1+\beta}}.
\end{eqnarray*}
The left term tends to infinity as $n$
goes to infinity (since $\beta>1$), hence 
$\displaystyle\lim_{m\rightarrow \infty} \rho_m(\epsilon)\frac{e^{m^{\beta}}}{m^{1+\beta}}
=\infty$. 

As a main conclusion, the previous three propositions
generalise well the  i.i.d case, even without assuming the
$(a,b)$-standard assumption.


\section{Application to stationary Markov chains on a compact state space}\label{sectionMarkov}

This section gives conditions on stationary Markov chains on a compact state space so that they are asymptotically $(\epsilon,\alpha)$-dense in this state space. Those conditions can be checked by studying the $\beta$-mixing
properties of these Markov chains and by applying 
Proposition \ref{beta} above. We choose however in this section to be even more precise by adopting specific models and carrying out explicit calculations.

\vskip 5pt
Let $(X_n)_{n\geq 0}$ be an homogeneous   Markov chain satisfying the following two assumptions.
\begin{itemize}
\item[$({\mathcal A}_1)$] This Markov chain has an invariant measure $\mu$ with compact support $\BBm$
(and then the chain is stationary).
\item[$({\mathcal A}_2)$] The transition probability kernel $K$, defined for $x\in \BBm$, by
 $$
 K(x,\cdot) = \BBp(X_1\in \cdot |X_0=x)
 $$
 is absolutely continuous with respect to some measure $\nu$ on $\BBm$, i.e. there exists a positive measure $\nu$ and a positive function $k$ such that for any $x\in \BBm$, $
 K(x,dy)=k(x,y)\nu(dy)$. Moreover, for  some $b>0$ and $\epsilon_0>0$,
 $$
 V_{d}:=\inf_{x\in \BBm}\inf_{0<\epsilon<\epsilon_0}\left(\frac{1}{\epsilon^b}\int_{B(x,\epsilon)\cap \BBm }\nu(dx_1)\right)>0
 $$
 and  that there exists a positive constant $\kappa$ such that
 $\displaystyle
 \inf_{x\in \BBm,\,y\in \BBm}k(x,y)\geq \kappa>0.
 $.
\end{itemize}

{{
\begin{pro}\label{coro1}
Suppose that Assumptions $({\mathcal A}_1)$ and $({\mathcal A}_2)$ are satisfied for some Markov chain $(X_n)_{n\geq 0}$. Then, for any $n\geq 1$ and any positive $\epsilon$ small enough,
$$ \BBp_{\mu}\left(d_H(\bbx_n ,\BBm)>\epsilon \right) \leq \frac{4^b(1-\kappa \epsilon^bV_{d}/2^b)^{n}}{\kappa\epsilon^b V_{d}}.$$ 
Consequently  this Markov chain  is asymptotically ($\epsilon,\alpha$)-dense in $\BBm$ with a threshold $n_0$ given by$$
n_0=\frac{2^b}{\kappa \epsilon^b V_d}\left(\ln\left(\frac{4^b}{\kappa \epsilon^b V_d}\right)+\ln\left(\frac{1}{\alpha}\right)  \right),
$$
and $V_d$ is as introduced in Assumption $({\mathcal A}_2)$. 
\end{pro}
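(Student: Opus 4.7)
The plan is to apply the key technical Proposition \ref{pro1} with block size $r=1$ and number of blocks $k=n$, so that $Y_{i,1}=X_i$ and $\BBm_{d}=\BBm$. This reduces the problem to controlling two quantities separately: the denominator $\rho_1(\epsilon/4)=\inf_{x\in\BBm}\mu(B(x,\epsilon/4)\cap \BBm)$, and the numerator $\sup_{x\in\BBm}\BBp_\mu(\min_{1\leq i\leq n}\|X_i-x\|>\epsilon/2)$. The whole strategy is to exploit the Markov structure and Assumption $(\mathcal{A}_2)$ to bound both uniformly in $x$.

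For the denominator, I would use the invariance of $\mu$ to write, for any Borel set $A$, $\mu(A)=\int_\BBm \mu(dy)K(y,A)$. Applied to $A=B(x,\epsilon/4)\cap \BBm$, Assumption $(\mathcal{A}_2)$ gives the pointwise bound $K(y,B(x,\epsilon/4)\cap\BBm)\geq \kappa \int_{B(x,\epsilon/4)\cap\BBm}\nu(dz)\geq \kappa V_d (\epsilon/4)^b$, which integrates to $\rho_1(\epsilon/4)\geq \kappa V_d \epsilon^b/4^b$.

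For the numerator, I would proceed by iterated conditioning using the Markov property. Setting $p:=\kappa V_d (\epsilon/2)^b$, the same calculation as above gives, uniformly in $y\in\BBm$, $\BBp(\|X_i-x\|\leq \epsilon/2\mid X_{i-1}=y)\geq p$, and hence $\BBp(\|X_i-x\|>\epsilon/2\mid \mathcal{F}_{i-1})\leq 1-p$ almost surely. A standard tower argument (peeling off one conditional expectation at a time) then yields $\BBp_\mu(\forall\,1\leq i\leq n,\ \|X_i-x\|>\epsilon/2)\leq (1-p)^n$, uniformly in $x\in\BBm$. Plugging both bounds into Proposition \ref{pro1} gives exactly
\[
\BBp_\mu(d_H(\bbx_n,\BBm)>\epsilon)\leq \frac{4^b(1-\kappa\epsilon^b V_d/2^b)^n}{\kappa\epsilon^b V_d}.
\]

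The threshold follows by inverting this bound. I want the right-hand side $\leq \alpha$, which after taking logarithms and using the elementary inequality $\log(1-p)\leq -p$ becomes $np\geq \log(4^b/(\kappa \epsilon^b V_d))+\log(1/\alpha)$. Since $1/p=2^b/(\kappa\epsilon^b V_d)$, this gives the announced value of $n_0$. The only delicate step is the iterated conditioning argument for the numerator, but because the lower bound on the transition kernel is uniform in the conditioning state, there is no real obstacle; the rest is bookkeeping.
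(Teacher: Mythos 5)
Your proposal is correct and follows essentially the same route as the paper: Proposition \ref{pro1} with $r=1$, $k=n$, a uniform one-step lower bound $\inf_{x,y}\BBp(\|X_1-x\|\le\epsilon\mid X_0=y)\ge\kappa\epsilon^bV_d$ drawn from Assumption $({\mathcal A}_2)$ (the paper's Lemma \ref{mc}), an iterated-conditioning/induction argument for the numerator (the paper's Lemma \ref{lem2}), and inversion via $(1-p)^n\le e^{-np}$ for the threshold. No gaps.
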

}}

The proof, and some key lemmas, are deferred to Section \ref{proof}. 

We next give examples of Markov chains satisfying the requirements of Proposition \ref{coro1}. Those examples concern stationary Markov chains on  the balls and stationary Markov chains on the circles.
\subsection{Stationary Markov chains on a ball of $\BBr^d$}

\subsubsection{Random difference equations}
Let $(X_n)_{n\geq 0}$ be a Markov chain defined, for $n\geq 0$,  by
\begin{equation}\label{SRE}
X_{n+1}=A_{n+1}X_{n}+ B_{n+1},
\end{equation}
where  $A_{n+1}$ is a $(d\times d)$-matrix, $X_n\in \BBr^d,\,\, B_n\in \BBr^d$, $(A_n,B_n)_{n\geq 1}$ is an i.i.d. sequence
independent of $X_0$. Recall that for a matrix $M$, $\|M\|$ is the operator norm defined by $\|M\|=\sup_{x\in \BBr^d,\,\|x\|=1}\|Mx\|$.
It is well known  that
for any $n\geq 1$, $X_n$ is distributed as $\sum_{k=1}^n A_1\cdots A_{k-1}B_k + A_1\cdots A_{n}X_0$, see for instance \cite{kesten}. It is also well-known that
the following conditions (see \cite{goldie, kesten1})
\begin{equation}\label{csre}
\BBe(\ln^+\|A_1\|)<\infty,\,\, \BBe(\ln^+\|B_1\|)<\infty,\,\, \lim_{n\rightarrow \infty}\frac{1}{n}\ln\|A_1\cdots A_n\|<0\,\,\, a.s.,
\end{equation}
ensure the existence of a stationary solution to (\ref{SRE}), and that $\|A_1\cdots A_n\|$ approaches $0$ exponentially fast. If in addition $\BBe\|B_1\|^{\beta}<\infty$ for some $\beta>0$, then the series $R:=\sum_{i=1}^{\infty}A_1\cdots A_{i-1}B_i$ converges a.s.
and the distribution of $X_n$ converges to that of $R$, independently of $X_0$. The distribution of $R$ is then that of the stationary measure of the chain.\\
\\
{\it Compact state space.}
  If $\|B_1\|\leq c<\infty$ for some fixed $c$, then this stationary Markov chain is $\BBm$-compactly supported. In particular if
    $\|A_1\|\leq \rho <1$ for some fixed $\rho$, then $\BBm$ is included in the ball  $B_d(0, \frac{c}{1-\rho})$ of $\BBr^d$.\\
\\
{\it Transition kernel.}
Suppose that, for any $x \in \BBm$, the random vector $A_1x+B_1$ has a density $f_{A_1x+B_1}$ with respect to the Lebesgue measure (here $\nu$ is  the Lebesgue measure) satisfying $\inf_{x,\,\,y\in \BBm}f_{A_1x+B_1}(y)\geq \kappa$, then
$
k(x,y)=  f_{A_1x+B_1}(y)\geq \kappa>0.
$\\
\\
We collect all the above results in the following corollary.

\begin{coro}\label{corosre} Suppose that in the model (\ref{SRE}), conditions (\ref{csre}) are satisfied, and moreover $\|B_1\|\leq c<\infty$.
If the density of $A_1x+B_1$; $f_{A_1x+B_1}$, satisfies $\inf_{x,\,\,y\in \BBm}f_{A_1x+B_1}(y)\geq \kappa>0$ for some positive $\kappa$,
then assumptions (${\mathcal A}_1$) and (${\mathcal A}_2$) are  satisfied with $b=d$ and $\nu$ is the Lebesgue measure on $\BBr^d$. 
\end{coro}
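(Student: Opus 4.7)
The plan is to verify each clause of $(\mathcal A_1)$ and $(\mathcal A_2)$ for the affine recursion (\ref{SRE}).

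For $(\mathcal A_1)$, I would appeal to the classical theory recalled immediately above the corollary: hypothesis (\ref{csre}) guarantees that the series $R=\sum_{i\geq 1}A_1\cdots A_{i-1}B_i$ converges a.s.\ and that its law is the unique stationary distribution of $(X_n)$, so the chain is stationary with invariant measure $\mu=\mathcal L(R)$. The added hypothesis $\|B_1\|\leq c$, together with the exponential a.s.\ decay of $\|A_1\cdots A_n\|$ coming from the negative top Lyapunov exponent in (\ref{csre}), gives $\|R\|\leq c\sum_{i\geq 0}\|A_1\cdots A_i\|<\infty$ a.s. Hence $R$ takes values in a bounded deterministic subset of $\BBr^d$, and $\BBm=\mathrm{supp}(R)$ is compact.

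For the first half of $(\mathcal A_2)$, take $\nu$ to be Lebesgue measure on $\BBr^d$. The transition kernel is then $K(x,dy)=\mathcal L(A_1x+B_1)(dy)=f_{A_1x+B_1}(y)\,dy$, so $k(x,y)=f_{A_1x+B_1}(y)$, which by hypothesis of the corollary is bounded below by $\kappa>0$ on $\BBm\times\BBm$.

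The remaining and most delicate piece is the concentration $V_d>0$ with exponent $b=d$, i.e.\ $\mathrm{Leb}(B(x,\epsilon)\cap\BBm)\geq V_d\epsilon^d$ uniformly in $x\in\BBm$ and $0<\epsilon<\epsilon_0$. The structural input I would exploit is invariance: from $\mu=\mu K$ and $k\geq\kappa$, the measure $\mu$ is absolutely continuous with respect to Lebesgue, with density $\phi(y)=\int k(x,y)\mu(dx)$ satisfying $\phi\geq\kappa$ on $\BBm$ and vanishing off $\BBm$, so $\mu\geq\kappa\,\mathrm{Leb}|_{\BBm}$ and $\BBm$ has finite Lebesgue measure without lower-dimensional components. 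The main obstacle is then a uniform geometric regularity of $\BBm$ at its boundary; in the prototype case $\|A_1\|\leq\rho<1$ with $f_{A_1x+B_1}\geq\kappa$ on all of $\bar B_d(0,c/(1-\rho))$, the strict positivity of the transition density forces $\BBm=\bar B_d(0,c/(1-\rho))$, and then $\mathrm{Leb}(B(x,\epsilon)\cap\BBm)\geq c_d(\epsilon/2)^d$ is elementary with $V_d=c_d/2^d$. The harder and more honest version of this last step is to identify minimal structural assumptions under (\ref{csre}) that ensure the required $(V_d,d)$-regularity of $\BBm$ in general; I expect this to be the single nontrivial point of the proof.
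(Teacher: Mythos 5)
Your verification of $(\mathcal A_1)$ and of the kernel lower bound in $(\mathcal A_2)$ is exactly the paper's argument: the corollary is presented there as a mere collection of the two preceding italicized paragraphs, namely that (\ref{csre}) gives the a.s.\ convergent perpetuity $R=\sum_{i\ge1}A_1\cdots A_{i-1}B_i$ whose law is the invariant measure, that $\|B_1\|\le c$ together with the exponential decay of $\|A_1\cdots A_n\|$ forces $R$ to live in a bounded set so that $\BBm$ is compact, and that $K(x,dy)=f_{A_1x+B_1}(y)\,dy$ so that $k(x,y)\ge\kappa$ is just the stated hypothesis. On these points you and the paper coincide.

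The one clause you isolate as delicate --- the lower regularity $V_d=\inf_{x\in\BBm}\inf_{0<\epsilon<\epsilon_0}\epsilon^{-d}\,\mathrm{Leb}(B(x,\epsilon)\cap\BBm)>0$ with $b=d$ --- is precisely the clause the paper does not address: its proof of the corollary contains no verification of this condition, and it is not a formal consequence of the stated hypotheses. Your remark that invariance gives $\mu\ge\kappa\,\mathrm{Leb}|_{\BBm}$ is correct and shows $\BBm$ has positive Lebesgue measure near each of its points, but, as you note, it does not yield the uniform $\epsilon^d$ bound; a support with an outward cusp would satisfy everything assumed while making $V_d=0$ for $b=d$. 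So your proposal is not missing anything relative to the paper --- it makes explicit a point the paper passes over in silence --- and your elementary volume estimate does close the gap in the prototype case $\|A_1\|\le\rho<1$ with the density bounded below on all of $\overline{B}(0,c/(1-\rho))$, which is effectively the situation in the paper's AR(1) example. If you wanted a clean fix in general, the honest move is to add the $(V_d,d)$-regularity of $\BBm$ (equivalently an $(a,b)$-standard-type assumption on the support) as an explicit hypothesis of the corollary.
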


\subsubsection*{Example. The AR(1) process in $\BBr$.}
We consider a particular case of the Markov chain as defined in (\ref{SRE}) with $d=1$, where, for each $n$, $A_n=\rho$ with $|\rho|<1$. We obtain
the standard first order linear Auto-Regressive process, that is
$$
X_{n+1}=\rho X_n + B_{n+1},
$$
we suppose that
\begin{itemize}
    \item  $B_1$ has a density function $f_B$ supported on $[-c,c]$ for some $c>0$ with $\kappa:=\inf_{x\in [-c,c]}f_B(x)>0$
    \item $X_0 \in [\frac{-c}{1-|\rho|}, \frac{c}{1-|\rho|}]$
\end{itemize}
This Markov chain evolves in a compact state space which is a subset of $ [\frac{-c}{1-|\rho|}, \frac{c}{1-|\rho|}]$. Thanks to Corollary \ref{corosre}, $(X_n)_n$ admits a stationary measure $\mu$. We have, moreover,
$$
k(x,y)=f_{B_1}(y-\rho x) \geq \kappa,\,\, \forall\,\, x\in \BBm,\,\, \forall\,y\in \BBm.
$$
Assumptions (${\mathcal A}_1$) and (${\mathcal A}_2$) are then satisfied with $b=1$ and $\nu$ is the Lebesgue measure on $\BBr$.

\subsubsection*{Example. The AR($k$) process in $\BBr$.}
The AR($k$) is defined by,
$$
Y_n= \alpha_1 Y_{n-1} + \alpha_2 Y_{n-2}+\cdots+ \alpha_k Y_{n-k} + \epsilon_n,
$$
where $\alpha_1,\cdots,\alpha_k \in \BBr$. Since this model can be written in the form of (\ref{SRE}) with $d=1$,
$$
X_n= (Y_n,Y_{n-1},\cdots,Y_{n-k+1})^t,\,\,\,\, B_n= (\epsilon_n,0,\cdots,0)^t,\,\,\, A_n=\left(
                                                                                           \begin{array}{ccc}
                                                                                             \alpha_1 & \cdots & \alpha_k \\
                                                                                             I_{k-1} &  & 0 \\
                                                                                           \end{array}
                                                                                         \right)
$$
all the above results, for random difference equations, apply under the corresponding assumptions. In particular the process AR($2$) is stationary as soon as $|\alpha_2|<1$ and $\alpha_2+ |\alpha_1|<1$.

\subsection{The M\"obius Markov chain on the circle}\label{sectionmobius}
Our aim is to give an example of a Markov chain on the unit circle, known as M\"obius Markov chain, satisfying the requirements of Proposition  \ref{coro1}. This Markov chain $(X_n)_{n\in \BBn}$ is introduced in
\cite{Kato} and is defined as follows:
\begin{itemize}
\item $X_0$ is a random variable which takes values on the unit circle.
\item For $n\geq 1$,
$$
X_n=\frac{X_{n-1} + \beta}{\beta X_{n-1}+1}\epsilon_n,
$$
where $\beta\in ]-1,1[$ and $(\epsilon_n)_{n\geq 1}$ is a sequence of i.i.d. random variables which are independent
of $X_0$ and distributed as the wrapped Cauchy distribution with a common density with respect to the arc length measure $\nu$ on the unit circle $\partial B(0,1)$,
$$
f_{\varphi}(z)= \frac{1}{2\pi}\frac{1-\varphi^2}{|z-\varphi|^2},\,\, \varphi\in [0,1[ \,\, {\mbox{fixed}},\,\,\, z\in \partial B(0,1).
$$
\end{itemize}
The following proposition holds.

\begin{pro}\label{mobius}
Let $(X_n)_{n\geq 0}$ be the M\"obius Markov chain on the unit circle as defined above.
Then this Markov chain admits a unique invariant distribution, denoted by $\mu$.
If $X_0$ is distributed as $\mu$ then the set
$\bbx_n =\{X_1,\cdots,X_n\}$ converges in probability, as $n$ tends to infinity, in the Hausdorff distance to the unit circle $\partial B(0,1)$,
more precisely, for any $\alpha\in ]0,1[$, any positive $\epsilon$ sufficiently small and any $n\geq \frac{2}{\kappa v \epsilon}\left(\ln(\frac{1}{\alpha})+\ln(\frac{4}{\epsilon\kappa v}) \right) $
$$
d_H(\bbx_n , \partial B(0,1)) \leq \epsilon,
$$
with probability at least $1-\alpha$. Here $v$ is a finite positive constant and
$
\kappa= \frac{1}{2\pi}\frac{1-\varphi}{1+\varphi}.$

\end{pro}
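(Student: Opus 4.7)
The plan is to verify that the M\"obius chain falls under the hypotheses of Proposition \ref{coro1}, i.e.\ to check $(\mathcal{A}_1)$ and $(\mathcal{A}_2)$ with state space $\BBm=\partial B(0,1)$, reference measure $\nu$ equal to arc length, exponent $b=1$, and to identify the constants $\kappa$ and $V_d$ appearing in the threshold. The conclusion and the explicit form of $n_0$ then follow directly from the general bound in Proposition \ref{coro1}.

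The first step is to compute the transition density. Write $T(x)=(x+\beta)/(\beta x+1)$; since $\beta\in(-1,1)$ and $|x|=1$, an elementary computation shows that $|T(x)|=1$, so $T$ maps the unit circle into itself. Conditional on $X_{n-1}=x$, the recursion gives $X_n=T(x)\epsilon_n$, and because multiplication by the unit complex number $T(x)$ is a rotation of the circle it preserves the arc length measure $\nu$. Consequently
\[
K(x,dy)=k(x,y)\,\nu(dy),\qquad k(x,y)=f_\varphi\bigl(\overline{T(x)}\,y\bigr)=\frac{1}{2\pi}\,\frac{1-\varphi^2}{|\overline{T(x)}\,y-\varphi|^2}.
\]
Since $|\overline{T(x)}y|=1$ and $\varphi\in[0,1)$, the denominator satisfies $|\overline{T(x)}y-\varphi|\leq 1+\varphi$, and therefore
\[
k(x,y)\;\geq\;\frac{1}{2\pi}\,\frac{1-\varphi^2}{(1+\varphi)^2}\;=\;\frac{1}{2\pi}\,\frac{1-\varphi}{1+\varphi}\;=:\;\kappa\;>\;0,
\]
uniformly in $x,y\in\partial B(0,1)$, which is exactly the constant stated in the proposition.

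For the existence and uniqueness of the invariant measure $\mu$, the uniform lower bound $K(x,\cdot)\geq \kappa\,\nu$ is a Doeblin minorization condition on the compact state space $\partial B(0,1)$; it implies that the chain is uniformly ergodic and, in particular, admits a unique invariant probability measure $\mu$ (this is also derived in \cite{Kato}). This yields $(\mathcal{A}_1)$. For the geometric part of $(\mathcal{A}_2)$, a direct computation of the arc-length of $B(x,\epsilon)\cap\partial B(0,1)$ for $x$ on the unit circle gives $4\arcsin(\epsilon/2)$, which is $\geq v\epsilon$ for all $\epsilon\in(0,\epsilon_0)$, with any $v\in(0,2]$ and $\epsilon_0$ small enough. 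Thus $(\mathcal{A}_2)$ holds with exponent $b=1$ and constant $V_d=v$.

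The final step is simply to insert these values into the threshold formula of Proposition \ref{coro1}, which with $b=1$ becomes
\[
n_0\;=\;\frac{2}{\kappa\,v\,\epsilon}\Bigl(\ln\tfrac{1}{\alpha}+\ln\tfrac{4}{\epsilon\,\kappa\,v}\Bigr),
\]
matching the expression in the statement, and giving $\BBp_\mu(d_H(\bbx_n,\partial B(0,1))\leq\epsilon)\geq 1-\alpha$ for all $n\geq n_0$. The main point to be careful with is the geometric computation showing that a Euclidean ball of radius $\epsilon$ cuts out an arc of length of order $\epsilon$ on the circle (giving $b=1$, not $b=d=2$); everything else is a routine instantiation of Proposition \ref{coro1}.
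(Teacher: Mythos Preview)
Your proof is correct and follows essentially the same route as the paper: verify $(\mathcal{A}_1)$ and $(\mathcal{A}_2)$ for the M\"obius chain with $\nu$ the arc-length measure and $b=1$, bound the density below by $\kappa=\frac{1}{2\pi}\frac{1-\varphi}{1+\varphi}$, and then invoke Proposition \ref{coro1}. The only cosmetic differences are that the paper writes the transition density via $\phi_1(x)=\varphi\,T(x)$ rather than via $f_\varphi(\overline{T(x)}y)$ (equivalent since $|T(x)|=1$), and obtains the arc-length lower bound by a limit argument instead of your closed form $4\arcsin(\epsilon/2)$.
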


The M\"obius Markov chain of Proposition \ref{mobius} is then asymptotically $(\epsilon,\alpha)$-dense in the unit circle with a threshold $n_0$ given by, 
$$
n_0=  \frac{2}{\kappa v \epsilon}\left(\ln(\frac{1}{\alpha})+\ln(\frac{4}{\epsilon\kappa v}) \right),
$$ $\kappa$ being as in the statement of the proposition while the positive constant $v$ is defined by Formula (\ref{A2-3}) below.

\begin{proof} We have to prove that all the requirements of Proposition \ref{coro1}
are satisfied. Our main reference for this proof is \cite{Kato}.
It is proved there that this Markov chain has a unique invariant measure $\mu$ on the unit circle. {{This measure $\mu$ has full support on $\partial B(0,1)$ (so that Assumption
$({\mathcal A}_1)$ is satisfied with $\BBm=\partial B(0,1)$).}} The task now is to check  Assumption
$({\mathcal A}_2)$.  We have also, for $x\in \partial B(0,1)$,
\begin{eqnarray}\label{A2-1}
&& K(x,dz)= \BBp(X_1\in dz)|X_0=x)=  k(x,z) \nu(dz),
\end{eqnarray}
where $\nu$ is the arc length measure on the unit circle and for $x,z\in \partial B(0,1)$,
$$
k(x,z)= \frac{1}{2\pi}\frac{1-|\phi_1(x)|^2}{|z-\phi_1(x)|^2},
$$
with
$$
\phi_1(x)= \frac{\varphi x + \beta \varphi}{\beta x + 1}.
$$
We obtain, since $\frac{x+\beta}{\beta x+ 1}\in \partial B(0,1)$ whenever $x\in \partial B(0,1)$,
$\displaystyle 
|\phi_1(x)|^2= \varphi^2.
$
Now, for $x,z\in \partial B(0,1)$,
$$
|z-\phi_1(x)|\leq |z|+ |\phi_1(x)|\leq 1+ \varphi.
$$
Hence,
\begin{equation}\label{A2-2}
k(x,z)\geq \frac{1}{2\pi} \frac{1-\varphi^2 }{(1+ \varphi)^2}= \frac{1}{2\pi}\frac{1-\varphi}{1+\varphi}>0.
\end{equation}.
We have now, to check that, for some $\epsilon_0>0$ 
\begin{equation}\label{A2-3}
v:=\inf_{u\in \partial B(0,1)} \inf_{0<\epsilon<\epsilon_0} \left(\epsilon^{-1}\int_{\partial B(0,1) \cap B(u,\epsilon)}\nu(dx_1)\right)>0.
\end{equation}
For this let $u\in \partial B(0,1)$, define
 $\displaystyle\overset{\frown}{AB} = \int_{\partial B(0,1) \cap B(u,\epsilon)}\nu(dx_1).$
 
\begin{figure}[htb]
\begin{center}
\epsfig{file=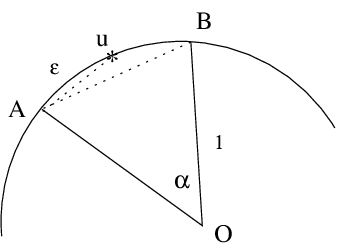,height=1in,width=1.3in,angle=0.0}
\end{center}
\caption{The ball $B(u,\epsilon)$ intersects the unit circle at two points $A$ and $B$. }
\end{figure}

We have (see Figure 3.) $\|u-A\|=\|u-B\|=\epsilon$.
Let $\alpha = \widehat{AOB}$, then
on the one hand $\overset{\frown}{AB}=\alpha$.
On the other hand, since the triangle
$OAu$ is isosceles, with an angle of $\alpha/2$ in $O$, then
$\epsilon  = 2\sin (\alpha/4)$.
We thus obtain
$$
\lim_{\epsilon\rightarrow 0}{1\over\epsilon}{\overset{\frown}{AB}}
= \lim_{\epsilon\rightarrow 0}{\alpha\over\epsilon}
= \lim_{\alpha\rightarrow 0}{\alpha\over 2\sin (\alpha/4)}=2,$$
from this (\ref{A2-3}) is satisfied.

Assumption $({\mathcal A}_2)$  is satisfied thanks to (\ref{A2-1}), (\ref{A2-2}) and (\ref{A2-3}). The proof of Proposition \ref{mobius} is complete by using Proposition \ref{coro1}. 
\end{proof}

\section{Simulations}\label{simu}

The purpose of this section is to simulate a M\"obius Markov process on the unit circle (as defined in Section \ref{sectionmobius})
and to illustrate the results of Proposition \ref{mobius} and of Theorem \ref{mainrecon}. More precisely, we simulate:
\begin{itemize}
\item a random variable,  $X_0$, distributed as the uniform law  on the unit circle $\partial B(0,1)$, that is $X_0$ has the density,
$$
f(z) =\frac{1}{2\pi},\,\,\,\forall\,\, z\in \partial B(0,1).
$$
\item for $n\geq 1$,
$\displaystyle 
X_n=X_{n-1} \epsilon_n,
$, where  $(\epsilon_n)_{n\geq 1}$ is a sequence of i.i.d. random variables which are independent
of $X_0$ and distributed as the wrapped Cauchy distribution with a common density with respect to the arc length measure $\nu$ on the unit circle $\partial B(0,1)$,
$$
f_{\varphi}(z)= \frac{1}{2\pi}\frac{1-\varphi^2}{|z-\varphi|^2},\,\, \varphi\in [0,1[,\,\,\, z\in \partial B(0,1).
$$
\end{itemize}
In this case, it is proved in \cite{Kato}   that this Markov chain is stationary and its  stationary measure is the uniform law  on the unit circle.

\begin{figure}[htb]
\includegraphics[width=13cm]{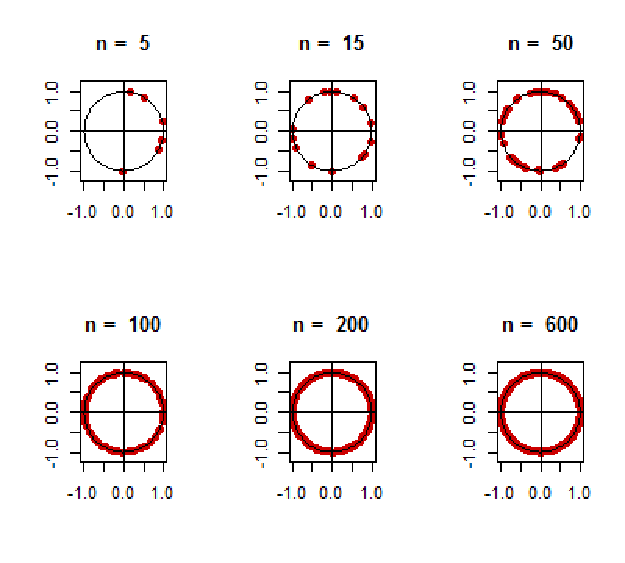}
\caption{Illustrations of the set $\{x_1,\cdots,x_{n}\}$ which is a realisation of the stationary random variables $\bbx_n = \{X_1,\ldots, X_n\}$ for different values of $n$ and with $\varphi=0$.}\label{fig1}
\end{figure}

\begin{figure}[htb]
\begin{tabular}{ccc}
\includegraphics[width=4.3cm]{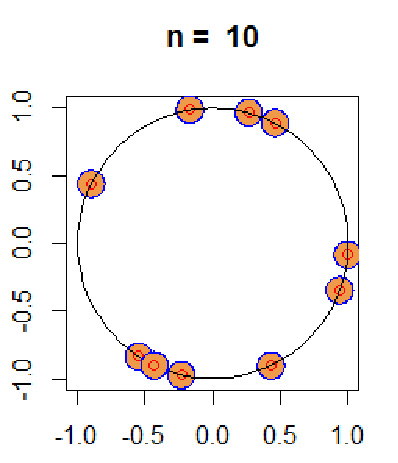}&\includegraphics[width=4.3cm]{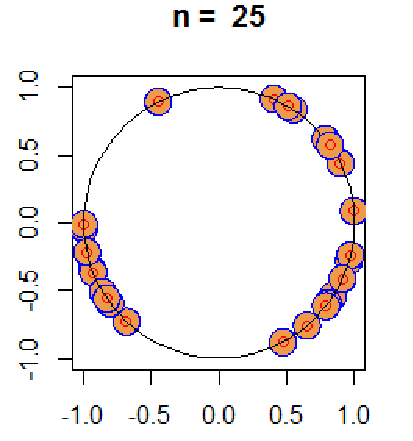}&\includegraphics[width=4.3cm]{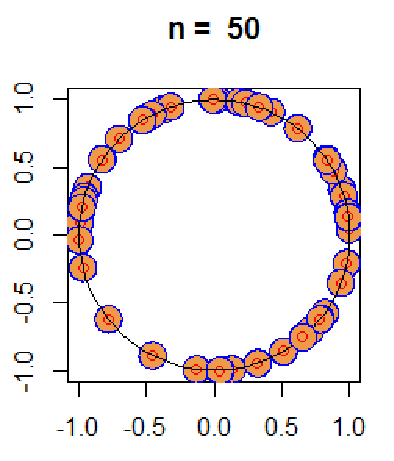} \\
\includegraphics[width=4.3cm]{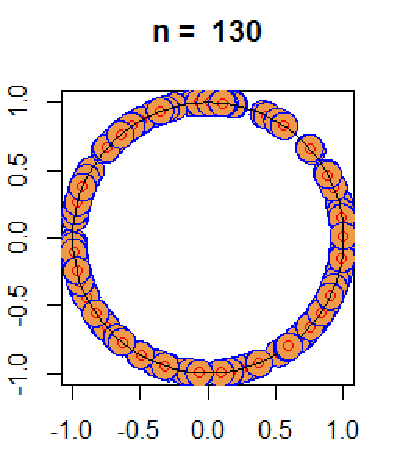}&\includegraphics[width=4.3cm]{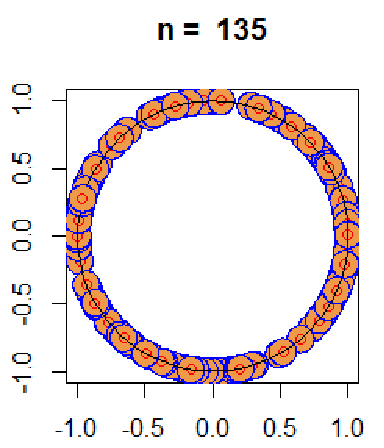}&\includegraphics[width=4.3cm]{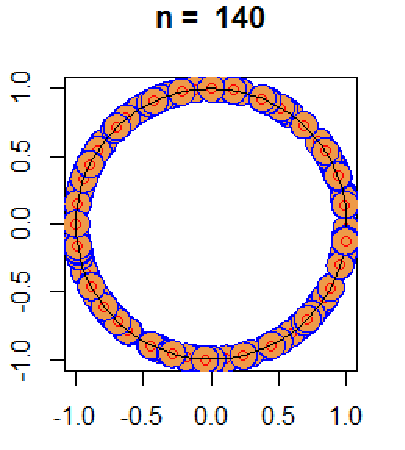}
\end{tabular}
\caption{In the above graphics, the points of $\bbx_n$ are in red.
Each of these points is the center of the circle
with radius $r=0.1$. This is an illustration of the reconstruction result $\bigcup_{x\in\bbx_n}B(x,r)\simeq M$, with different values of $n$ and with $r=0.1$. In the above, there is reconstruction when $n=140$ and $r=0.1$. The density $\epsilon\over 2$ is at least ${2\pi\over 280} = 0.0224$, and so $\epsilon$ is at least $0.0448$. For this value of $\epsilon=0.0448$, $\epsilon <r=0.1$ and this reconstruction is consistent with Theorem \ref{mainrecon}.} \label{fig2} 
\end{figure}

\clearpage

\section{Deferred Proofs}\label{proofs}

{{
\subsection{Proof of Proposition \ref{coro2}} Let $\epsilon_0>0$ and $\epsilon \in ]0,\epsilon_0[$ be fixed. We set in this proof $m'=m+1$, $r=m'$ and $ k= k_n=[n/m']$.
Proposition \ref{pro1}, applied with those values of $r$ and $k$, gives
\begin{eqnarray}\label{BB1}
&&\BBp\left(d_H(\bbx_n , {\BBm}) > \epsilon\right) 
\leq   \BBp\left(d_H(\{Y_{1,m'},\cdots,Y_{k_n,m'}\}, {\BBm}_{dm'}) > \epsilon\right) {\nonumber} \\
&& \leq  \frac{\sup_{x\in {\BBm}_{dm'}}\BBp\left(\min_{1\leq i\leq k_n}\|Y_{i,m'}-x\|> \epsilon/2\right)}{1-\sup_{x\in {\BBm}_{dm'}}\BBp\left(\|Y_{1,m'}-x\|> \epsilon/4\right)}, 
\end{eqnarray}
where
$
Y_{i,m'}= (X_{(i-1)m'+1},\cdots,X_{im'})^t.
$
The sequence $(X_n)_{n\in \BBt}$ is stationary and supposed to be $m$-dependent. Consequently, the two families $\{Y_{1,m'}, Y_{3,m'}, Y_{5,m'},\cdots\}$ and
$\{Y_{2,m'}, Y_{4,m'}, Y_{6,m'},\cdots\}$ consist each of  i.i.d. random vectors. Since we are assuming that $\rho_{m'}(\epsilon)\geq \kappa_{\epsilon}$, we have
\begin{eqnarray}\label{BB2}
&&\sup_{x\in {\BBm}_{dm'}}\BBp\left(\min_{1\leq i\leq k_n}\|Y_{i,m'}-x\|> \frac{\epsilon}{2}\right)
\leq \sup_{x\in {\BBm}_{dm'}}\BBp\left(\min_{1\leq 2i\leq k_n}\|Y_{2i,m'}-x\|> \frac{\epsilon}{2}\right) {\nonumber}
\\
&&\leq \sup_{x\in {\BBm}_{dm'}}\left(\BBp\left(\|Y_{1,m'}-x\|> \frac{\epsilon}{2}\right)\right)^{[k_n/2]}\leq \left(1- \rho_{m'}(\frac{\epsilon}{2})\right)^{[k_n/2]}\leq \left(1- \kappa_{\frac{\epsilon}{2}}\right)^{[k_n/2]},
\end{eqnarray}
and
\begin{eqnarray}\label{BB3}
&&  1-\sup_{x\in {\BBm}_{dr}}\BBp\left(\|Y_{1,m'}-x\|> \frac{\epsilon}{4}\right)\geq \kappa_{\frac{\epsilon}{4}}.
\end{eqnarray}
We obtain, after collecting the  bounds (\ref{BB1}), (\ref{BB2}) and (\ref{BB3}), that for any $\epsilon>0$,
$$
\BBp\left(d_H(\bbx_n , {\BBm}) > \epsilon\right)  \leq \frac{\left(1- \kappa_{\frac{\epsilon}{2}}\right)^{[k_n/2]}}{\kappa_{\frac{\epsilon}{4}}} \leq  \frac{\exp\left(-\kappa_{\frac{\epsilon}{2}}[k_n/2]\right)}{\kappa_{\frac{\epsilon}{4}}}.
$$
Let $\alpha \in ]0,1[$ be such that
$\displaystyle 
\frac{\exp\left(-\kappa_{\frac{\epsilon}{2}}[k_n/2]\right)}{\kappa_{\frac{\epsilon}{4}}} \leq \alpha,
$ which is equivalent to
$$
[k_n/2] \geq \frac{1}{\kappa_{\frac{\epsilon}{2}}} \log\left(\frac{1}{\alpha \kappa_{\frac{\epsilon}{4}} }\right),
$$
then, for any $n\geq \frac{2m'}{\kappa_{\frac{\epsilon}{2}}} \log\left(\frac{1}{\alpha \kappa_{\frac{\epsilon}{4}} }\right)+3m'$,
$$
[k_n/2]\geq k_n/2-1\geq \frac{n}{2m'}-3/2 \geq \frac{1}{\kappa_{\frac{\epsilon}{2}}} \log\left(\frac{1}{\alpha \kappa_{\frac{\epsilon}{4}} }\right)
$$
and therefore
$\displaystyle 
\BBp\left(d_H(\bbx_n , {\BBm}) > \epsilon\right) \leq \alpha.
$
The proof of Proposition  \ref{coro2} is complete.
$\hfill\Box$
}}
\subsection{Proof of Proposition \ref{beta}}
We use the blocking method of \cite{Yu} to transform the dependent $\beta$-mixing sequence $(X_n)_{n\in \BBn}$ into a sequence of nearly independent blocks.
Let $Z_{2i,{r_n}}=(\xi_j,\,\, j\in \{(2i-1){r_n}+1,\cdots, 2i{r_n}\})^t$ be a sequence of i.i.d. random vectors independent of the sequence $(X_i)_{i\in \BBn}$
such that, for any $i$, $Z_{2i,{r_n}}$ is distributed as $Y_{2i,{r_n}}$ (which is distributed as $Y_{1,{r_n}}$). Lemma 4.1 of \cite{Yu} proves that
the two vectors  $(Z_{2i,{r_n}})_i$ and $(Y_{2i,{r_n}})_i$ are related thanks to the following relation,
$$
\left|\BBe(h(Z_{2i,{r_n}}, 1\leq 2i\leq k_n))- \BBe(h(Y_{2i,{r_n}}, 1\leq 2i\leq k_n))\right|\leq k_n\beta_{r_n},
$$
which is true for any measurable function bounded by $1$.
We then have, using the last bound,
\begin{eqnarray*}
&&k_n\sup_{x\in {\BBm}_{d{r_n}}}\BBp\left(\min_{1\leq i\leq k_n}\|Y_{i,{r_n}}-x\|> \epsilon\right)
\leq k_n\sup_{x\in {\BBm}_{d{r_n}}}\BBp\left(\min_{1\leq 2i\leq k_n}\|Y_{2i,{r_n}}-x\|> \epsilon\right) \\
&& \leq  k_n\sup_{x\in {\BBm}_{d{r_n}}}\left|\BBp\left(\min_{1\leq 2i\leq k_n}\|Y_{2i,{r_n}}-x\|> \epsilon\right)-\BBp\left(\min_{1\leq 2i\leq k_n}\|Z_{2i,{r_n}}-x\|> \epsilon\right)\right|\\
&& + k_n\sup_{x\in {\BBm}_{d{r_n}}}\BBp\left(\min_{1\leq 2i\leq k_n}\|Z_{2i,{r_n}}-x\|> \epsilon\right)\\
&& \leq k_n^2\beta_{r_n}+ k_n\sup_{x\in {\BBm}_{d{r_n}}}\BBp\left(\min_{1\leq 2i\leq k_n}\|Z_{2i,{r_n}}-x\|> \epsilon\right)\\
&& \leq k_n^2\beta_{r_n}+ k_n\sup_{x\in {\BBm}_{d{r_n}}}\Bigl(\BBp\left(\|Y_{1,{r_n}}-x\|> \epsilon\right)\Bigr)^{[k_n/2]}\\
&& \leq k_n^2\beta_{r_n}+ k_n\Bigl(1-\rho_{r_n}(\epsilon)\Bigr)^{[k_n/2]}\\
&& \leq k_n^2\beta_{r_n}+ k_n\exp\left(-[\frac{k_n}{2}]\rho_{r_n}(\epsilon)\right),
\end{eqnarray*}
and,
$$
1-\sup_{x\in {\BBm}_{dr_n}}\BBp\left(\|Y_{1,r_n}-x\|> \epsilon/4\right)= \rho_{r_n}(\epsilon/4).
$$
Consequently Proposition  \ref{pro1} gives,    
\begin{eqnarray}\label{lastb}
\BBp\left(d_H(\bbx_n , {\BBm}) > \epsilon\right) &\leq &
\frac{\sup_{x\in {\BBm}_{dr_n}}\BBp\left(\min_{1\leq i\leq k_n}\|Y_{i,r_n}-x\|> \epsilon/2\right)}{1-\sup_{x\in {\BBm}_{dr_n}}\BBp\left(\|Y_{1,r_n}-x\|> \epsilon/4\right)} {\nonumber}\\
&\leq & \frac{k_n^2\beta_{r_n}+ k_n\exp\left(-[\frac{k_n}{2}]\rho_{r_n}(\epsilon/2)\right)}{k_n\rho_{r_n}(\epsilon/4)}.
\end{eqnarray}
We have now to construct two sequences $k_n$ and $r_n$ such that $ k_nr_n \leq n$ and that
\begin{eqnarray}\label{condition}
&& \lim_{n\rightarrow \infty} k_n^2\beta_{r_n}=0,\,\, \lim_{n\rightarrow\infty} k_n\rho_{r_n}(\epsilon)=\infty,\,\, \lim_{n\rightarrow \infty}k_n\exp\left(-\frac{k_n}{2}\rho_{r_n}(\epsilon)\right)=0.
\end{eqnarray}
We have supposed that $\lim_{m\rightarrow \infty} \rho_m(\epsilon)\frac{e^{m^{\beta}}}{m^{1+\beta}}=\infty$ for some $\beta>1$.
Define $\gamma =1/\beta\in ]0,1[$ and
$$
k_n=[\frac{n}{(\ln n)^{\gamma}}],\,\,\, r_n=[(\ln n)^{\gamma}].
$$
We have then, (letting $m=r_n=[(\ln n)^{\gamma}]$),
$
\lim_{n\rightarrow \infty} k_n \frac{\rho_{r_n}(\epsilon)}{\ln n}=\infty
$
and then (since $k_n\leq n$),
$$
\lim_{n\rightarrow \infty} k_n \frac{\rho_{r_n}(\epsilon)}{\ln(k_n)}=\infty.
$$
The last limit gives that $\lim_{n\rightarrow\infty} k_n\rho_{r_n}(\epsilon)=\infty$ and for $n$ large enough and for some
$C>2$,  $k_n \frac{\rho_{r_n}(\epsilon)}{\ln(k_n)}\geq C$, so that,
$$
k_n\exp\left(-\frac{k_n}{2}\rho_{r_n}(\epsilon)\right) \leq k_n^{1-C/2}.
$$
Consequently, $\lim_{n\rightarrow \infty}k_n\exp\left(-\frac{k_n}{2}\rho_{r_n}(\epsilon)\right)=0$. Now, we deduce from $\lim_{m\rightarrow\infty} \frac{e^{2m^{\beta}}}{m^2}\beta_m=0$ that (letting $m=r_n=[(\ln n)^{\gamma}]$)
$$
 \lim_{n\rightarrow \infty} k_n^2\beta_{r_n}=0.
$$
The two sequences $k_n$ and $r_n$, so constructed, satisfy (\ref{condition}) and then it holds for those sequences
$$
\lim_{n\rightarrow \infty}\frac{k_n^2\beta_{r_n}+ k_n\exp\left(-\frac{k_n}{2}\rho_{r_n}(\epsilon/2)\right)}{k_n\rho_{r_n}(\epsilon/4)}=0,
$$
hence for any $\alpha\in ]0,1[$, there exists an integer $n_0$ such that for any $n\geq n_0$,
$$
\frac{k_n^2\beta_{r_n}+ k_n\exp\left(-\frac{k_n}{2}\rho_{r_n}(\epsilon/2)\right)}{k_n\rho_{r_n}(\epsilon/4)}\leq \alpha.
$$
Combining this last inequality with that of (\ref{lastb}) finishes the proof of Proposition \ref{beta}.
$\hfill\Box$

\subsection{Proof of Proposition \ref{psi}}
We have,
\begin{eqnarray}\label{d1}
&&k_n\BBp\left(\min_{1\leq i\leq k_n}\|Y_{i,{r_n}}-x\|> \epsilon\right)
\leq k_n\BBp\left(\min_{1\leq 2i\leq k_n}\|Y_{2i,{r_n}}-x\|> \epsilon\right){\nonumber}\\
&& \leq k_n\left|\BBp\left(\min_{1\leq 2i\leq k_n}\|Y_{2i,{r_n}}-x\|> \epsilon\right)-
\prod_{i:\,1\leq 2i\leq k_n} \BBp\left(\|Y_{2i,{r_n}}-x\|> \epsilon\right)\right| {\nonumber}\\
&& + k_n\prod_{i:\,1\leq 2i\leq k_n} \BBp\left(\|Y_{2i,{r_n}}-x\|> \epsilon\right).
\end{eqnarray}
We have, for $s$ events $A_1,\cdots,A_s$, (with the convention that, $\prod_{j=1}^{0}\BBp(A_j)=1$)
\begin{eqnarray*}
&& \BBp(A_1\cap \cdots \cap A_s) -\prod_{i=1}^s\BBp(A_i)
= \sum_{i=1}^{s-1} \BBp(A_1)\cdots\BBp(A_{i-1})\Cov(\BBone_{A_{i}}, \BBone_{A_{i+1}\cap \cdots \cap A_s}).
\end{eqnarray*}
Hence,
$$
\left| \BBp(A_1\cap \cdots \cap A_s) -\prod_{i=1}^s\BBp(A_i)\right|\leq \sum_{i=1}^{s-1} \left|\Cov(\BBone_{A_{i}}, \BBone_{A_{i+1}\cap \cdots \cap A_s})\right|.
$$
We apply the last bound with $A_i= \left(\|Y_{2i,{r_n}}-x\|> \epsilon\right)$ and we use (\ref{psidep}), we get
\begin{eqnarray*}
&& \left|\Cov(\BBone_{A_{i}}, \BBone_{A_{i+1}\cap \cdots \cap A_s})\right| \leq \Psi({r_n}),
\end{eqnarray*}
and
\begin{eqnarray}\label{d2}
&& \left|\BBp\left(\min_{1\leq 2i\leq k_n}\|Y_{2i,{r_n}}-x\|> \epsilon\right)-
\prod_{i:\,1\leq 2i\leq k_n} \BBp\left(\|Y_{2i,{r_n}}-x\|> \epsilon\right)\right| \leq k_n\Psi({r_n}).
\end{eqnarray}
We deduce, combining (\ref{d1}) and (\ref{d2}),
\begin{eqnarray*}
&&k_n\BBp\left(\min_{1\leq i\leq k_n}\|Y_{i,{r_n}}-x\|> \epsilon\right)\leq k_n^2\Psi({r_n})+ k_n\Bigl(1-\rho_{r_n}(\epsilon)\Bigr)^{[k_n/2]}\\
&&\leq k_n^2\Psi({r_n})+ k_n\exp\Bigl(-[k_n/2]\rho_{r_n}(\epsilon)\Bigr).
\end{eqnarray*}
Consequently, we get as for (\ref{lastb}),
$$
\BBp\left(d_H(\bbx_n , {\BBm}) > \epsilon\right) \leq 
\frac{k_n^2\Psi({r_n})+ k_n\exp\left(-[\frac{k_n}{2}]\rho_{r_n}(\epsilon/2)\right)}{k_n\rho_{r_n}(\epsilon/4)}.
$$
We have now to
construct two sequences $r_n$ and $k_n$ such that
$$
\lim_{n\rightarrow \infty} k_n\exp(-k_n\rho_{r_n}(\epsilon)/2)=0,\,\,\, \lim_{n\rightarrow \infty}k_n^2\Psi({r_n})=0,\,\,\, \lim_{n\rightarrow \infty} k_n\rho_{r_n}(\epsilon)=\infty.
$$
This last construction is possible as argued at the end of the proof of Proposition \ref{beta}. $\hfill\Box$

\subsection{Lemmas for Section \ref{sectionMarkov}}\label{proof}
In order to prove Proposition \ref{coro1}, we need the following two lemmas in order to check the conditions of Proposition \ref{pro1} (with $r=1$).
Recall that $\BBp_{x}$ (resp. $\BBp_{\mu}$) denotes the  probability when the initial condition $X_0=x$ (resp. $X_0$ is distributed as the stationary measure $\mu$).

\begin{lem}\label{mc}
Let $(X_n)_{n\geq 0}$ be a Markov chain satisfying Assumptions $({\mathcal A}_1)$ and $({\mathcal A}_2)$. Then, it holds, for any  $0<\epsilon<\epsilon_0$ and any $x_0\in \BBm$,
$$
\inf_{x\in \BBm}\BBp_{x_0}\left(\|X_1-x\|\leq \epsilon\right) \geq \kappa \epsilon^b V_{d},\,\,\, \inf_{x\in \BBm}\BBp_{\mu}\left(\|X_1-x\|\leq \epsilon\right) \geq \kappa\epsilon^bV_{d}.
$$
\end{lem}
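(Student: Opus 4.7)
The plan is to unpack the definitions in Assumptions $({\mathcal A}_1)$ and $({\mathcal A}_2)$ directly: write the probability as an integral against the kernel, apply the pointwise lower bound on the density $k$, and then invoke the definition of $V_d$. The second inequality will follow by integrating the first one against $\mu$.

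More concretely, for fixed $x_0, x \in \BBm$ and $0<\epsilon<\epsilon_0$, I would express
$$\BBp_{x_0}\bigl(\|X_1-x\|\leq \epsilon\bigr) \;=\; K(x_0, B(x,\epsilon)) \;=\; \int_{B(x,\epsilon)\cap \BBm} k(x_0,y)\,\nu(dy),$$
using that the chain stays inside $\BBm$ so the kernel $K(x_0,\cdot)$ is supported in $\BBm$ (this is the one point to justify carefully; it follows from $\mu$ being invariant with support $\BBm$, so the relation $\mu=\mu K$ forces $K(x_0,\cdot)$ to concentrate on $\BBm$ for $\mu$-a.e.\ $x_0$, and by the continuity/density assumptions for every $x_0\in\BBm$). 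Applying the uniform lower bound $k(x_0,y)\geq \kappa$ from $({\mathcal A}_2)$ yields
$$\BBp_{x_0}\bigl(\|X_1-x\|\leq \epsilon\bigr) \;\geq\; \kappa \int_{B(x,\epsilon)\cap \BBm}\nu(dy) \;\geq\; \kappa\,\epsilon^b\, V_{d},$$
where the last step is precisely the definition of $V_d$. Taking the infimum over $x\in\BBm$ gives the first stated bound.

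For the second bound, since the initial distribution is $\mu$, condition on $X_0$ and use the first bound pointwise:
$$\BBp_{\mu}\bigl(\|X_1-x\|\leq\epsilon\bigr) \;=\; \int_{\BBm}\BBp_{x_0}\bigl(\|X_1-x\|\leq\epsilon\bigr)\,\mu(dx_0) \;\geq\; \kappa\,\epsilon^b\,V_d\,\mu(\BBm)\;=\;\kappa\,\epsilon^b\,V_d,$$
since $\mu$ is a probability measure. Taking the infimum over $x\in\BBm$ gives the second inequality.

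There is no real obstacle here; the lemma is essentially an immediate consequence of the assumptions, and the only point requiring a brief justification is that $K(x_0,\cdot)$ is supported on $\BBm$, which is why $({\mathcal A}_1)$ is invoked alongside $({\mathcal A}_2)$.
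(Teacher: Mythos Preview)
Your proof is correct and follows essentially the same route as the paper: write the probability as an integral of the transition density over $B(x,\epsilon)\cap\BBm$, apply the uniform lower bound $k\geq\kappa$, and then invoke the definition of $V_d$; the $\BBp_\mu$ bound follows by integrating against $\mu$. One small simplification: your detour through invariance of $\mu$ to justify that $K(x_0,\cdot)$ is supported on $\BBm$ is unnecessary, since Assumption $({\mathcal A}_2)$ already stipulates that $\nu$ is a measure on $\BBm$, so $K(x_0,dy)=k(x_0,y)\nu(dy)$ is supported there by construction.
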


\begin{proof}
We have, using Assumption $({\mathcal A}_2)$,
\begin{eqnarray*}
&& \BBp_{x_0}\left(\|X_1-x\|\leq \epsilon\right)=\BBp_{x_0}\left(X_1\in B(x,\epsilon)\cap \BBm\right)= \int_{B(x,\epsilon) \cap \BBm} K(x_0,dx_1)\\
&& = \int_{B(x,\epsilon)\cap \BBm} k(x_0,x_1)\nu(dx_1)\\
&& \geq \kappa \int_{B(x,\epsilon)\cap \BBm }\nu(dx_1)\geq \kappa \epsilon^b\inf_{0<\epsilon<\epsilon_0}\left(\frac{1}{\epsilon^b}\int_{B(x,\epsilon)\cap \BBm }\nu(dx_1)\right)\geq \kappa \epsilon^b V_{d}.
\end{eqnarray*}

The proof of Lemma \ref{mc} is complete since $\BBp_{\mu}\left(\|X_1-x\|\leq \epsilon\right)=\int \BBp_{x_0}\left(\|X_1-x\|\leq \epsilon\right) d\mu(x_0)$.
\end{proof}

\begin{lem}\label{lem2}
Let $(X_n)_{n\geq 0}$ be a Markov chain satisfying Assumptions $({\mathcal A}_1)$ and $({\mathcal A}_2)$.
Then, it holds, for any $0<\epsilon<\epsilon_0$ and $k\in \BBn\setminus\{0\}$,
$$
\sup_{x\in \BBm} \BBp_{\mu}\left(\min_{1\leq i\leq k}\|X_i-x\| > \epsilon\right)\leq \left(1-\kappa\epsilon^bV_{d}\right)^k.
$$
\end{lem}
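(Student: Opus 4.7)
The plan is to rewrite the event $\{\min_{1\leq i\leq k}\|X_i-x\|>\epsilon\}$ as the event that the chain avoids the ball $B(x,\epsilon)$ during the first $k$ steps, and then to peel off the factors one at a time using the Markov property together with the uniform one-step lower bound supplied by Lemma \ref{mc}.

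First, I would fix $x\in\BBm$ and set $B:=B(x,\epsilon)$ and
$$
f_k(x_0):=\BBp_{x_0}\!\left(X_1\notin B,\ldots,X_k\notin B\right).
$$
By the Markov property,
$$
f_k(x_0)=\int_{\BBm\setminus B} K(x_0,dx_1)\,f_{k-1}(x_1).
$$
Lemma \ref{mc} yields $\BBp_{y}(X_1\in B)\geq \kappa\epsilon^{b}V_{d}$ for every $y\in\BBm$, hence
$$
\sup_{y\in\BBm}\BBp_{y}(X_1\notin B)\leq 1-\kappa\epsilon^{b}V_{d}.
$$
Induction on $k$ then gives $f_k(x_0)\leq (1-\kappa\epsilon^{b}V_{d})^{k}$ uniformly in $x_0\in\BBm$: the base case $k=1$ is exactly the previous inequality, and the induction step follows from
$$
f_k(x_0)\leq \Bigl(\sup_{x_1\in\BBm}f_{k-1}(x_1)\Bigr)\int_{\BBm\setminus B}K(x_0,dx_1)\leq (1-\kappa\epsilon^{b}V_{d})^{k-1}(1-\kappa\epsilon^{b}V_{d}).
$$

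Integrating against the stationary measure,
$$
\BBp_{\mu}\!\left(\min_{1\leq i\leq k}\|X_i-x\|>\epsilon\right)=\int f_k(x_0)\,\mu(dx_0)\leq (1-\kappa\epsilon^{b}V_{d})^{k},
$$
and since the right-hand side is independent of $x\in\BBm$, taking the supremum over $x\in\BBm$ preserves the bound. The main (and only) subtlety is to ensure that the one-step bound from Lemma \ref{mc} is genuinely uniform in the starting point $y\in\BBm$; this is built into the statement of Lemma \ref{mc} and is what makes the successive conditioning work at each step.
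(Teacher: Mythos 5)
Your proof is correct and follows essentially the same route as the paper's: both arguments peel off one indicator at a time via the Markov property, apply the uniform one-step bound of Lemma \ref{mc}, and conclude by induction on $k$. The only (cosmetic) difference is that you condition on the first step of the chain, whereas the paper conditions on $\mathcal{F}_{k-1}$ and removes the last factor; the substance is identical.
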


\begin{proof}
Set $\mathcal{F}_{n}=\sigma(X_0,\ldots,X_n)$.
By Markov property and Lemma \ref{mc}
\begin{eqnarray*}
&& \BBp_{\mu}\left(\min_{1\leq i\leq k}\|X_i-x\| > \epsilon\right) =\BBp_{\mu}(\forall\,1\leq i\leq k,\, X_i\not\in B(x,\epsilon))\\
&&=\BBe_{\mu}\left(\prod_{i=1}^{k-1}\BBone_{\{X_i\not\in B(x,\epsilon)\}}\BBe(\BBone_{\{X_k\not\in B(x,\epsilon)\}}|\mathcal{F}_{k-1})\right)\\
&&=\BBe_{\mu}\left(\prod_{i=1}^{k-1}\BBone_{\{X_i\not\in B(x,\epsilon)\}}\BBe_{X_{k-1}}(\BBone_{\{X_k\not\in B(x,\epsilon)\}})\right)\\
&&\leq  (1-\kappa \epsilon^bV_{d})\BBe_{\mu}\left(\prod_{i=1}^{k-1}\BBone_{\{X_i\not\in B(x,\epsilon)\}}\right)\\
&&\leq  (1-\kappa \epsilon^b V_{d})\BBp_{\mu}(\forall\, 1\leq i\leq k-1,\, X_i\not\in B(x,\epsilon)).
\end{eqnarray*}
Lemma \ref{lem2} is proved using the last bound together with an  induction reasoning on $k$.
\end{proof}

\subsection{Proof of Proposition \ref{coro1}}
Proposition
\ref{pro1}, applied with $r=r_n=1$ and $k=k_n=n$, gives
$$
\BBp_{\mu}\left(d_H(\bbx_n ,\BBm)>\epsilon \right) \leq  \frac{\sup_{x\in {\BBm}_{d}}\BBp_{\mu}\left(\min_{1\leq i\leq n}\|Y_{i,1}-x\|> \epsilon/2\right)}{1-\sup_{x\in {\BBm}_{d}}\BBp_{\mu}\left(\|Y_{1,1}-x\|> \epsilon/4\right)},
$$
with $Y_{i,r}= (X_{(i-1)r+1},\cdots,X_{ir})$ so that $Y_{i,1}=X_i$. Consequently, noting that $\BBm_d=\BBm$,
$$
\BBp_{\mu}\left(d_H(\bbx_n ,\BBm)>\epsilon \right) \leq  \frac{\sup_{x\in {\BBm}}\BBp_{\mu}\left(\min_{1\leq i\leq n}\|X_i-x\|> \epsilon/2\right)}{1-\sup_{x\in {\BBm}}\BBp_{\mu}\left(\|X_1-x\|> \epsilon/4\right)},
$$
Now Lemmas \ref{mc} and \ref{lem2}, give
\begin{eqnarray*}
&& \sup_{x\in {\BBm}}\BBp_{\mu}\left(\min_{1\leq i\leq n}\|X_i-x\|> \epsilon\right) \leq (1-\kappa \epsilon^bV_{d})^{n}\leq \exp(-n\kappa \epsilon^bV_{d}), \\
&& 1-\sup_{x\in {\BBm}}\BBp_{\mu}\left(\|X_1-x\|> \epsilon\right) \geq \kappa\epsilon^b V_{d}>0.
\end{eqnarray*}
We obtain, combining the three last bounds
$$
\BBp_{\mu}\left(d_H(\bbx_n ,\BBm)>\epsilon \right) \leq \frac{4^b\exp(-n\kappa \epsilon^bV_{d}/2^b)}{\kappa\epsilon^b V_{d}}
$$
The proof of this proposition is complete since 
 $\displaystyle \alpha \geq \frac{4^b\exp(-n\kappa \epsilon^bV_{d}/2^b)}{\kappa\epsilon^b V_{d}}$ is equivalent to 
 $$
 n\geq \frac{2^b}{\kappa \epsilon^b V_d}\ln\left(\frac{4^b}{\alpha \kappa \epsilon^b V_d}\right).$$


\bibliographystyle{plain[8pt]}

\end{document}